\newcommand{\Sph}{\mathbb{S}}
\newcommand{\R}{\mathbb{R}}
\newcommand{\N}{\mathbb{N}}
\newcommand{\T}{\mathbb{T}}
\newcommand{\Z}{\mathbb{Z}}
\newcommand{\C}{\mathscr{C}}
\newcommand{\V}{\mathcal{V}}
\newcommand{\A}{\mathcal{A}}
\newcommand{\Y}{\mathcal{Y}}
\newcommand{\ud}{\,\mathrm{d}}
\renewcommand{\d}{\partial}
\renewcommand{\S}{\mathscr S}
\let \div \relax
\DeclareMathOperator{\div}{div}
\let \curl \relax
\DeclareMathOperator{\curl}{curl}
\DeclareMathOperator{\dist}{dist}
\newcommand{\ovl}[1]{\overline{#1}}
\newcommand{\udl}[1]{\underline{#1}}
\renewcommand{\O}{\mathcal O}
\newcommand{\trans}{\mathcal T}
\DeclareMathOperator{\supp}{supp}
\newtheorem{thm}{THEOREM}[section]
\newtheorem{remark}[thm]{REMARK}
\newtheorem{lemma}[thm]{LEMMA}
\newtheorem{definition}[thm]{DEFINITION}
\newtheorem{proposition}[thm]{PROPOSITION}
\newcounter{thmbiss}
\title{On the controllability of the 2-D Vlasov-Stokes system}
 \author[I.~Moyano]{Iv\'an Moyano}
  \email{\href{mailto:ivan.moyano@math.polytechnique.fr}{ivan.moyano@math.polytechnique.fr}}
\thanks{{\em Acknowledgment:}
The author would like to very much thank Daniel Han-Kwan (CNRS and CMLS, Ecole Polytechnique) for suggesting him this problem and for many fruitful discussions and advices.
}
\begin{document}
\maketitle

\begin{abstract}
In this paper we prove an exact controllability result for the Vlasov-Stokes system in the two-dimensional torus with small data by means of an internal control. We show that one can steer, in arbitrarily small time, any initial datum of class $\C^1$ satisfying a smallness condition in certain weighted spaces to any final state satisfying the same conditions. The proof of the main result is achieved thanks to the return method and a Leray-Schauder fixed-point argument. \\  

\noindent
  {\sc Keywords:} {Vlasov-Stokes system; kinetic theory ; kinetic-fluid model; controllability; return method.}
\end{abstract}

\section{Introduction}
We consider the Vlasov-Stokes system in the 2-dimensional torus $\T^2 := \R^2 / \Z^2$, which writes, for $T>0$ and $\omega \subset \T^2$, 
\begin{equation}
\left\{  \begin{array}{ll}
\partial_t f + v\cdot \nabla_x f + \lambda \div_v\left[ (U-v)f  \right]= 1_{\omega}(x)G, & (t,x,v) \in (0,T) \times \T^2 \times \R^2, \\
-\Delta_x U + \nabla_x p = j_f, & (t,x)\in (0,T) \times \T^2, \\
\div_x U(t,x) = 0, & (t,x) \in (0,T) \times \T^2, \\
\int_{\T^2} U(t,x) \ud x = 0, & t\in (0,T), \\
f(0,x,v)=f_0(x,v), & (x,v) \in \T^2 \times \R^2,  
\end{array} \right.
\label{eq:VS}
\end{equation} where $\lambda>0$ is a friction coefficient and
\begin{equation}
j_f(t,x) := \int_{\R^2} vf(t,x,v) \ud v. 
\label{eq:currentdendisty}
\end{equation} This is a control system  in which the state is the distribution function $f(t,x,v)$ and the control is the source term $1_\omega(x) G(t,x,v)$, located in $[0,T] \times \omega \times \R^2$. \par
The Vlasov-Stokes system studied in this paper is an adaptation to the case of the 2-dimensional torus of the system obtained by P.E. Jabin and B. Perthame in \cite{Jabin}. System (\ref{eq:VS}) is a kinetic-fluid model describing the behaviour of a large cloud of particles, represented by the distribution function $f(t,x,v)$, interacting with an incompressible fluid, whose velocity field is given by $U(t,x)$, under the hypothesis that the effects of convection are negligible. The quantity $f(t,x,v)\ud x \ud v$ can be interpreted as the number of particles at time $t$ whose position is close to $x$ and whose velocity is close to $v$. This model is especially convenient when describing sprays and aerosols, bubbly flows or suspension and sedimentation phenomena. This system is also important in biological applications, such as the transport in the respiratory tract (see Section 1.2.2 for more details).

\subsection{Main result}

We are interested in the controllability properties of system (\ref{eq:VS}), by means of an internal control. The controllability problem that we want to solve is the following. Given $f_0$ and $f_1$ in a suitable function space and given $T>0$, is it possible to find a control $G$ steering the solution of (\ref{eq:VS}) from $f_0$ to $f_1$, in time $T$? In other words, we want to find $G$ such that
\begin{equation}
f(T,x,v) = f_1(x,v), \quad \forall \,  (x,v) \in  \T^2  \times \R^2.
\label{eq:finalstate}
\end{equation} Let us observe that a natural constraint regarding the control $G$ is in order. Indeed, since the Vlasov-Stokes system preserves the total mass when $G\equiv 0$, i.e.,
\begin{equation*}
\int_{\T^2} \int_{\R^2} f(t,x,v) \ud x \ud v = \int_{\T^2} \int_{\R^2} f_0(x,v) \ud x \ud v, \quad \forall t\in[0,T],
\end{equation*} we shall prescribe the condition
\begin{equation*}
\int_{\T^2} \int_{\R^2} G(t,x,v) \ud x \ud v = 0, \quad \forall t\in [0,T].
\end{equation*} More precisely, we obtain the following controllability result.

\begin{thm}
Let $T>0$, $\gamma>2$, $\lambda = 1$ and let $\omega$ be an arbitrary non empty open subset of $\T^2$. There exists $\epsilon>0$ such that for every $f_0, f_1 \in \C^1(\T^2\times \R^2) \cap W^{1,\infty}(\T^2 \times \R^2)$ satisfying that 
\begin{align}
& \int_{\T^2} \int_{\R^2}f_0(x,v) \ud x \ud v = \int_{\T^2} \int_{\R^2} f_1(x,v) \ud x \ud v,  
\nonumber \\
& \int_{\T^2} \int_{\R^2} v f_0(x,v) \ud x \ud v = \int_{\T^2} \int_{\R^2} v f_1(x,v) \ud x \ud v = 0, 
\label{eq:compatibility}
\end{align} and that, for $i = 0,1$, 
\begin{align}
&\|f_i\|_{\C^1(\T^2\times\R^2)} + \| (1+|v|)^{\gamma+2} f_i\|_{\C^0(\T^2\times \R^2)} \leq \epsilon, \label{eq:smalldata} \\
&\exists \kappa>0, \quad \left(|\nabla_x f_i| + |\nabla_v f_i| \right)(x,v) \leq \frac{\kappa}{(1+|v|)^{\gamma+1}}, \, \forall (x,v) \in \T^2\times\R^2, \label{eq:conditionforuniqueness}  
\end{align} there exists a control $G \in \C^0([0,T]\times \T^2 \times \R^2)$ such that the solution of (\ref{eq:VS}) with $f_{|t=0} = f_0$ exists, is unique and satisfies (\ref{eq:finalstate}).
\label{thm:controllability}  
\end{thm}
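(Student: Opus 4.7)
The plan is to apply Coron's return method combined with a Leray--Schauder fixed-point argument to close the nonlinear Stokes coupling. First I would construct a reference divergence-free, zero-mean velocity field $\bar U(t,x)$ on $[0,T]\times \T^2$, vanishing near $t=0$ and $t=T$, and such that every characteristic of
\[ \dot X = V, \qquad \dot V = \lambda(\bar U(t,X) - V) \]
starting from any phase-space point in the effective support of the data enters $(0,T)\times\omega$. Since the data are small and decay in $v$ (with $\gamma > 2$), the effective support is bounded in $v$; since $\omega$ is an arbitrary non-empty open subset of $\T^2$, one can design $\bar U$ strong enough (and peaked in some sub-interval of $[0,T]$) to sweep this region through $\omega$ in the prescribed time $T$, however small.

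Next, freeze a divergence-free, mean-zero velocity field $U$ in a small ball around $\bar U$ in a suitable Banach space (for instance $\C^0([0,T];\C^1(\T^2))$). For this frozen $U$, I would solve the linear Vlasov transport
\[ \d_t f + v\cdot\nabla_x f + \lambda \div_v[(U-v)f] = 1_\omega G \]
by the method of characteristics, choosing the control $G$ along the passage of each characteristic through $\omega$ so as to interpolate between $f_0$ and $f_1$. The geometric condition from the previous step (transferred to $U$ by a perturbation argument) ensures every characteristic visits $\omega$; the compatibility hypotheses \eqref{eq:compatibility} on the zeroth and first $v$-moments permit selecting $G$ with the correct mean; and the weighted decay \eqref{eq:smalldata}--\eqref{eq:conditionforuniqueness} with $\gamma>2$ guarantees that $j_f$ is well-defined, $\C^1$, and small.

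Finally, define the map $\Phi : U \mapsto \K[j_f]$, where $\K$ is the Stokes solution operator and $(f,G)$ are built from $U$ as above; a fixed point of $\Phi$ is a solution of \eqref{eq:VS} meeting the prescribed endpoints. The elliptic regularization of $\K$ combined with Arzel\`a--Ascoli provides compactness, continuity follows from $L^\infty$/$\C^1$ stability of the characteristic flow and of the control construction under perturbations of $U$, and the smallness parameter $\epsilon$ is tuned so that $\Phi$ maps a closed ball into itself; Leray--Schauder then furnishes the fixed point, while \eqref{eq:conditionforuniqueness} yields uniqueness of the corresponding Vlasov--Stokes trajectory. The main obstacle is the control construction in the second step: the source $G$ must simultaneously be supported in $\omega$, respect the global mean constraint, carry enough $v$-moment decay to preserve the weighted bounds on $f$ under iteration, and depend continuously on $U$ so that the Leray--Schauder hypotheses genuinely hold. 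Calibrating $\bar U$ strong enough to enforce the geometric control condition while keeping the Stokes feedback within the smallness regime is the delicate balance of the argument.
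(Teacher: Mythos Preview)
Your outline follows the right high-level architecture (return method plus Leray--Schauder), but there is a genuine gap in the construction of the reference field $\bar U$, and it stems from the sentence ``Since the data are small and decay in $v$ (with $\gamma>2$), the effective support is bounded in $v$.'' This is false: $f_0,f_1$ have $(1+|v|)^{-(\gamma+2)}$ decay but are not compactly supported, and to achieve $f(T,\cdot,\cdot)=f_1$ outside $\omega$ you must ensure that \emph{every} characteristic, including those with arbitrarily large $|v|$, passes through $\omega$. The obstruction at large $|v|$ is not the speed but the \emph{direction}: on $\T^2$ there are finitely many ``bad'' rational directions $e_1,\dots,e_N$ along which the free-with-friction trajectory $x+(1-e^{-t})v$ never meets $B(x_0,r_0)$, no matter how large $|v|$ is. The paper's Proposition~3.1 designs $\bar U$ specifically to deflect trajectories away from these bad directions (via harmonic approximants $\nabla^\perp\theta^i$ of $e_i^\perp$), and this is the hard analytic content you have skipped. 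Conversely, at small $|v|$ friction may prevent a trajectory from reaching $\omega$ in time $T$ even in a good direction; the paper treats this separately (Proposition~3.2) by building a second phase of $\bar U$ that accelerates slow particles above a threshold before re-applying the large-velocity mechanism. Your ``design $\bar U$ strong enough to sweep this region through $\omega$'' does not address either obstruction.

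Two smaller but related points. First, the paper's fixed point is set up on the distribution function $g$ (with $U^g$ recovered via Stokes), not on the velocity field; your choice $\Phi:U\mapsto\K[j_f]$ is a legitimate variant, but you should be aware that the compactness and stability estimates are stated in the paper for the $g$-formulation. Second, the paper does not construct $G$ by ``interpolating between $f_0$ and $f_1$ along each passage through $\omega$.'' Instead it first reduces (by a time-reversal argument on the backward Vlasov--Stokes system) to the case $f_1=0$ outside $\omega$, and then uses an \emph{absorption} boundary condition on $S(x_0,r_0)$: when a characteristic crosses $\gamma^{-}$ with sufficiently incoming velocity, the value of $f$ is multiplied by a cutoff $\A(x,v)$, and the resulting discontinuity is repaired inside $\omega$ by an extension operator $\Pi$. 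The source $G$ is then whatever residual the extended function leaves in the Vlasov equation inside $\omega$. This mechanism automatically respects the weighted decay and the moment constraints and is manifestly continuous in $U$; your interpolation-along-characteristics idea would have to reproduce all of this by hand, and you have not indicated how.
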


\begin{remark}
Condition (\ref{eq:compatibility}) in the previous statement can be seen as a natural compatibility condition needed for the well-posedness of the Stokes system with sources $j_{f_0}$ and $j_{f_1}$.
\end{remark}

\subsection{Previous work}
\subsubsection{The controllability of kinetic equations}
There exist some results on the controllability of nonlinear kinetic equations. The first one was obtained by O. Glass for the Vlasov-Poisson system on the torus (see \cite{Glass}). The strategy of this work consists on the construction of a reference solution, in the spirit of the return method introduced by J.-M. Coron (see 1.3.2 below). This allows to conclude the existence and uniqueness of a controlled solution by means of the Leray-Schauder theorem. \par
The strategy of \cite{Glass} permits to obtain two types of results:
\begin{itemize}
\item In dimension 2, with arbitrary control region, one can obtain a local controllability result, i.e., a small-data result. 
\item In any dimension and with a geometric assumption, precisely that the control region $\omega \subset \T^n$ contains a hyperplane of $\R^n$ by the canonical surjection, one can obtain a global exact controllability result, i.e., an arbitrary-data result. However, the use of the invariant scaling of the Vlasov-Poisson system is crucial in this case.    
\end{itemize} 
This strategy was later extended in \cite{GDHK1} by O. Glass and D. Han-Kwan to the Vlasov-Poisson system under external and Lorentz forces. The authors obtain both local and global exact controllability results in the case of bounded external forces, which requires some new ideas to construct the reference trajectories. Precisely, the authors exploit the fact that the dynamics under the external force and without it are similar in small time. In the case of Lorentz forces, a precise knowledge of the magnetic field and a geometric control condition in the spirit of \cite{Bardos} allow to obtain a local exact controllability result. The functional framework of \cite{Glass, GDHK1} is the one given by the classical solution of the Vlasov-Poisson system, that is, some appropriate H\"{o}lder spaces. \par

A remarkably different strategy has been developed by \cite{GDHK2} in the context of the Vlasov-Maxwell system. In this case, the authors combine the classical strategy described previously with some controllability results for the Maxwell system, under the geometric control condition of \cite{Bardos}. They also obtain a local result for $\omega$ containing a hyperplane, using the convergence towards the Vlasov-Poisson system under a certain regime.  

\subsubsection{A short review on the Vlasov-Stokes system}
The Vlasov-Stokes system has been rigorously derived from the dynamics of a system of particles in a fluid by P. E Jabin and B. Perthame in \cite{Jabin}. The system derived in this result, using the method of reflections and the dipole approximation, is set in the whole phase space $\R^3 \times \R^3$. In this setting, some regularity results can be found in \cite{GJP}. The limit when $\lambda \rightarrow \infty$ has been studied in \cite{Jabinfriction, JabinM3AS}. Moreover, a major feature emphasised in these works is that friction plays a very important role in the dynamics of the Vlasov-Stokes system. More precisely, friction entails the dissipation of the kinetic energy. Consequently, as it has been proven by P. E. Jabin in \cite{Jabinasymp}, when $t \rightarrow \infty$, we recover a macroscopic limit of the form $\ovl{\rho}(x) \delta_{v=0}$. On the other hand, very little information concerning the density $\ovl{\rho}$ is known.  \par 
The non-stationary Vlasov-Stokes system on a domain with boundary has been considered by K. Hamdache in \cite{Hamdache}. The author gives a well-posedness result in Sobolev spaces in the case of Dirichlet boundary condition for the velocity field and specular reflexion boundary conditions for the distribution function. \par
A derivation of a model considering also the effects of convection has been obtained by L. Desvillettes, F. Golse and V. Ricci in \cite{Golse}.\par 
For more concrete biological models, let us cite \cite{Boudin} and the references therein.  
\subsection{Strategy of the proof}

\subsubsection{Obstructions to controllability}
Since Theorem \ref{thm:controllability} is of local nature around the steady state $(f,U,p)=(0,0,0)$, a first step to achieve its proof could be the use of the linear test (see \cite{Coron}). Following the classical scheme, the controllability of the linearised system around the trivial trajectory and the classical inverse mapping theorem between proper functional spaces would imply the controllability of the nonlinear system (\ref{eq:VS}). \par 
Indeed, the formal linearised equation around the trajectory $(f,U,p) = (0,0,0)$ is
\begin{equation}
\left\{ \begin{array}{ll}
\d_t F + v\cdot \nabla_x F - v\cdot \nabla_v F - 2 F = 1_{\omega}(x)\tilde{G}, \\
F(0,x,v) = f_0(x,v),
\end{array} \right.
\label{eq:linearised}
\end{equation} which is a transport equation with friction. By the method of characteristics, we can give an explicit solution of (\ref{eq:linearised}), which writes
\begin{equation}
F(t,x,v) = e^{2t}f_0(x+ (1-e^{t})v, e^t v) + \int_0^t e^{2(t-s)} (1_{\omega}\tilde{G})(s,x+(1-e^{t-s})v, e^{t-s}v) \ud s.
\label{eq:explicit}
\end{equation} As pointed out in \cite{Glass}, there exist two obstructions for controllability, which are
\begin{description}
\item[Small velocities] A certain $(x,v)\in \T^2 \times \R^2$ can have a "good direction" with respect to the control region $\omega$, in the sense that $x+(1-e^{-t})v$ meets $\omega$ at some time. However, if $|v|$ is not sufficiently large, the trajectory of the characteristic beginning at this point would possibly not reach $\omega$ before a fixed time. In our case, the effects of friction could enhance this difficulty.
\item[Large velocities] The obstruction concerning large velocities is of geometrical nature. There exist some "bad directions" with respect to $\omega$, in the sense that a characteristic curve parting from $(x,v)\in \T^2\times \R^2$ would never reach $\omega$, no matter how large $|v|$ is. 
\end{description}  As a result of this, and considering again equation (\ref{eq:explicit}), we deduce that the linearised system is not controllable in general.

\subsubsection{The return method}
In order to circumvent these difficulties, we use the return method, due to J.-M. Coron.  \par
The idea of this method, in the case under study, is to construct a reference trajectory $(\ovl{f}, \ovl{U}, \ovl{p})$ starting from $(0,0,0)$ and coming back to $(0,0,0)$ at some fixed time in such a way the linearised system around it is controllable. This method allows to avoid the problems discussed in the previous section. \par 
We refer to \cite{Coron, GlassBourbaki} for presentations and examples on the return method.

\subsubsection{Strategy of the proof of Theorem \ref{thm:controllability}}
The strategy of this work follows very closely the scheme of \cite{Glass, GDHK1}. More precisely, it relies on two ingredients.
\begin{description}
\item[Step 1] We build a reference solution $(\ovl{f},\ovl{U},\ovl{p})$ of system (\ref{eq:VS}) with a control $\ovl{G}$, located in $\omega$, starting from $(0,0,0)$ and arriving at $(0,0,0)$ outside $\omega$  at time $T>0$ and such that the characteristics associated to the field $-v + \ovl{U}$ meet $\omega$ before $T>0$.
\item[Step 2] We build a solution $(f,U,p)$ close to $(\ovl{f},\ovl{U},\ovl{p})$ parting from $(f_0,U_0,p_0)$ and arriving at $(0,0,0)$ outside $\omega$ at time $T>0$. This can be done by means of a fixed-point argument involving an absorption operator in the control region.
\end{description} Furthermore, let us note that in the proof of Theorem \ref{thm:controllability} we can assume that 
\begin{equation}
f_1(x,v) = 0, \quad \forall (x,v) \in (\T^2 \setminus \omega) \times \R^2.
\label{eq:f1simplified}
\end{equation} To justify this assumption, we observe that, if $f$ is solution of (\ref{eq:VS}), then the functions
\begin{align*}
& \tilde{f}(t,x,v) := f(T-t,x,-v),  & \tilde{G}(t,x,v) := G(T-t,x,-v), \\
& \tilde{U}(t,x) := U(T-t,x),  & \tilde{p}(t,x) = p(T-t,x), 
\end{align*} for every $(t,x,v) \in [0,T] \times \T^2 \times \R^2$, satisfy the backwards Vlasov-Stokes system
\begin{equation}
\left\{  \begin{array}{ll}
\partial_t \tilde{f} + v\cdot \nabla_x \tilde{f} + \lambda \div_v\left[ (\tilde{U}+v) \tilde{f}  \right]= 1_{\omega}(x)\tilde{G}, & (t,x,v) \in (0,T) \times \T^2 \times \R^2, \\
-\Delta_x \tilde{U} + \nabla_x \tilde{p} = -j_{\tilde{f}}, & (t,x)\in (0,T) \times \T^2, \\
\div_x \tilde{U}(t,x) = 0, & (t,x) \in (0,T) \times \T^2, \\
\int_{\T^2} \tilde{U}(t,x) \ud x =0, & t\in [0,T], \\
\tilde{f}(T,x,v)=f_0(x,v), & (x,v) \in \T^2 \times \R^2,  
\end{array} \right.
\label{eq:VSbackwards}
\end{equation} Consequently, given $f_0, f_1$ as in Theorem \ref{thm:controllability}, it is sufficient to consider
\begin{itemize}
\item $f_0$ as initial datum and $\hat{f}_1$ satisfying (\ref{eq:f1simplified}) as a final state,
\item $f_1(x,-v)$ as initial datum and again $\hat{f}_1$ satisfying (\ref{eq:f1simplified}) as a final state.
\end{itemize} If we are able to solve these problems, a simple composition of them gives a solution with initial datum $f_0$ and final state $f_1$, We observe, as it will be clear from the proofs, that (\ref{eq:VSbackwards}) can be treated like the forward problem without significant modifications. As a consequence, we shall only treat specifically the forward problem with final state satisfying (\ref{eq:f1simplified}).

\subsubsection{Notation}
Let $T>0$. We denote $Q_T:=[0,T] \times \T^2 \times \R^2$ and $\Omega_T := [0,T] \times \T^2$. If $\Omega$ is a domain, for any $\sigma \in (0,1)$, $\C^{0,\sigma}_b(\Omega)$ denotes the space of bounded $\sigma-$H\"{o}lder functions, equipped with the norm
\begin{equation}
\| f \|_{\C^{0,\sigma}_b(\Omega)} := \|f\|_{L^{\infty}(\Omega)} + \sup_{(t,x,v) \not = (t',x',v')} \frac{|f(t,x,v) - f(t',x',v')|}{|(t,x,v) - (t',x',v')|^{\sigma}}.
\end{equation} We shall also use the Sobolev spaces $W^{m,p}(\Omega)$, with $m\in \N^*$ and $p\in [1,\infty]$ (see the Appendix B for more details). If $X$ is a Banach space, we will sometimes use, for simplicity, the notations $L^p_tX_x$ or $\C^0_tX_x$ to refer to $L^p(0,T;X)$ or $\C^0([0,T];X)$. \par
For $x\in \T^2$ and $r>0$, we denote by $B(x,r)$ the open ball in $\T^2$ with centre $x$ and radius $r$. Analogously, $S(x,r) = \partial B(x,r)$. We denote by $B_{\R^2}$ and $B_{\Sph^1}$ the balls in different settings. We will also admit that $\int_{\T^2} \ud x = 1$ without specifying the normalisation.\par
In dimension two, given a vector field $V\in \C^1(\R^2;\R^2)$, with $V(x) = (V_1,V_2)(x)$ we recall the usual operator
\begin{equation*}
\curl V(x) := \d_1 V_2 (x)-\d_2 V_1 (x). 
\end{equation*} Given a function $\phi\in \C^1(\R^2;\R)$, we recall
\begin{equation*}
\nabla^{\perp} \phi(x) := \begin{pmatrix} -\d_2 \phi(x) \\ \d_1 \phi(x) \end{pmatrix}. 
\end{equation*}  

\subsubsection{Structure of the article}
In Section 2 we set some features of the characteristic equations. In Section 3 we construct the reference trajectory, treating separately the large velocities and the low ones. In Section 4 we define the fixed-point operator and we show that it has a fixed point. We prove next that this fixed point is the unique solution of system (\ref{eq:VS}) within a certain class. In Section 5 we show that this solution satisfies the controllability property (\ref{eq:finalstate}), which ends the proof of Theorem \ref{thm:controllability}. In section 6 we give some conclusions and comments. Finally, we gather in the Appendix some auxiliary results about harmonic approximation and the Stokes system.

\section{Some remarks on the characteristic equations}
Let be given a fixed $U(t,x)$. Let $s,t \in [0,T]$, $(x,v)\in \T^n\times \R^n$. We denote by $(X(t,s,x,v),V(t,s,x,v))$ the characteristics associated with the field $-v + U(t,x)$, i.e., the solution of the system
\begin{equation}
\left\{ \begin{array}{ll}
\frac{\ud}{\ud t} \begin{pmatrix} X \\ V \end{pmatrix} = \begin{pmatrix}   V(t)  \\ - V(t) + U(t,X) \end{pmatrix}, \\
\begin{pmatrix}   X \\ V \end{pmatrix}_{|t=s} = \begin{pmatrix} x \\ v \end{pmatrix}. 
\end{array} \right.
\label{eq:characteristcsystem}
\end{equation} We observe that if $U\in \C^0([0,T]; \C^1(\T^2;\R^2))$, system (\ref{eq:characteristcsystem}) has a unique solution, thanks to the Cauchy-Lipschitz theorem. Moreover, one has the explicit formulae 
\begin{equation}
\left\{ \begin{array}{ll}
X(t,s,x,v) = x+(1-e^{-t+s})v + \int_s^t \int_s^{t'} e^{\tau - t'} U(\tau, X(\tau,s,x,v)) \ud \tau \ud t', \\
V(t,s,x,v) = e^{-t+s}v + \int_s^t e^{\tau - t} U(\tau, X(\tau,s,x,v))\ud \tau.
\end{array} \right.
\label{eq:characteristics}
\end{equation} 

Using the method of characteristics, given an initial datum $f_0 \in \C^0(\T^2 \times \R^2)$, the solution of the transport equation with friction 
\begin{equation}
\left\{ \begin{array}{ll}
\d_t f + v\cdot \nabla_x f + \div_v\left[(U-v)f  \right] = 0, & (t,x,v)\in (0,T)\times \T^2 \times \R^2, \\
f(0,x,v) = f_0(x,v), & (x,v) \in \T^2 \times \R^2, 
\end{array} \right.
\end{equation} has the explicit solution
\begin{equation}
f(t,x,v) = e^{2t}f_0((X,V)(0,t,x,v)),
\label{eq:explicitsolution}
\end{equation} where $(X,V)$ are given by (\ref{eq:characteristics}).

The following result is an adaptation of \cite[Lemma 1, p. 337]{Glass} to the case with friction. It will be used to obtain some H\"{o}lder estimates in Section 4. \par 

\begin{lemma}
Let $U\in \C^0([0,T]; \C^1 (\T^2;\R^2))$ Then, the characteristics associated to the field $-v + U$ satisfy that for some $C = C(T,\|U\|_{\C^{0,1}_{t,x}})>0$,
\begin{eqnarray}
&& |(X,V)(t,s,x,v) - (X,V)(t',s',x',v')| \nonumber \\
&& \quad \quad \quad \quad \quad \quad \leq C (1+|v|)|(t,s,x,v) - (t',s',x',v')|, \nonumber 
\end{eqnarray} whenever $(t,s,x,v),(t',s',x',v') \in [0,T] \times \T^2 \times \R^2$, with $|v-v'|<1$.
\label{lemma:Gronwall}
\end{lemma}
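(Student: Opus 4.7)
The plan is to use the integral representation \eqref{eq:characteristics} and Gronwall's lemma, splitting the variation into the four parameters $t$, $s$, $x$, $v$ and invoking the triangle inequality. A preliminary observation is that the explicit formula for $V$ gives the uniform bound
\begin{equation*}
|V(\tau,s,x,v)| \;\leq\; e^{-(\tau-s)}|v| + T\|U\|_{L^\infty} \;\leq\; C(1+|v|), \qquad \tau\in[0,T],
\end{equation*}
where $C=C(T,\|U\|_{\C^{0,1}_{t,x}})$. This bound is the origin of the factor $(1+|v|)$ in the claim, since $\dot{X}=V$ and $\dot{V}=-V+U$ are both dominated by $C(1+|v|)$ along the trajectory.

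For the spatial-initial variations, I would fix $t,s$ and estimate $|(X,V)(t,s,x,v)-(X,V)(t,s,x',v')|$ by subtracting the integral equations, using $\|\nabla_x U\|_{L^\infty}\leq \|U\|_{\C^{0,1}_{t,x}}$ to bound the integrand linearly in the difference, and applying Gronwall in $\tau$. The source terms $(1-e^{-t+s})(v-v')$ in the $X$-equation and $e^{-t+s}(v-v')$ in the $V$-equation yield the additional $|v-v'|$-contribution; the hypothesis $|v-v'|<1$ plays no essential role here other than ensuring $1+|v|$ and $1+|v'|$ are comparable, so the constant does not blow up when writing bounds in terms of $|v|$. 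The outcome is
\begin{equation*}
|(X,V)(t,s,x,v)-(X,V)(t,s,x',v')| \;\leq\; C\bigl(|x-x'|+|v-v'|\bigr).
\end{equation*}

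For the variation in $t$ (with $s,x,v$ fixed), I would integrate the ODE \eqref{eq:characteristcsystem} from $t'$ to $t$ and use the preliminary bound on $|V(\tau)|$ to obtain
\begin{equation*}
|X(t,s,x,v)-X(t',s,x,v)|+|V(t,s,x,v)-V(t',s,x,v)| \;\leq\; C(1+|v|)|t-t'|.
\end{equation*}
The variation in $s$ is handled via the group property of the flow: writing $\Phi^s_t$ for the flow map from time $s$ to time $t$, one has $\Phi^{s'}_t = \Phi^s_t\circ\Phi^{s'}_s$, so
\begin{equation*}
\Phi^s_t(x,v) - \Phi^{s'}_t(x,v) \;=\; \Phi^s_t(x,v) - \Phi^s_t\bigl(\Phi^{s'}_s(x,v)\bigr),
\end{equation*}
and the spatial-initial Lipschitz bound above together with the $t$-variation bound applied to $|\Phi^{s'}_s(x,v)-(x,v)|\leq C(1+|v|)|s-s'|$ closes the estimate. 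Collecting the four pieces via the triangle inequality yields the announced inequality.

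The main technical point, which is the only place where the friction genuinely matters, is the preliminary a priori bound $|V(\tau,s,x,v)|\leq C(1+|v|)$ uniformly in $\tau$. Without friction one would expect a $(1+|v|)$ factor multiplied by a term polynomial in $T$ coming from repeated integration of $U$; the exponential weight $e^{-(\tau-s)}$ built into the $V$-formula ensures the bound stays linear in $|v|$ for all $\tau\in[0,T]$, which is exactly what the Glass argument in the frictionless case required from its own structure. Once this uniform bound is in hand, the rest is a mechanical Gronwall iteration on the integral formulas.
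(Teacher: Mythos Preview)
Your proposal is correct and follows essentially the same route as the paper: the paper also splits into the four variations (Step 1: $t$ alone; Step 2: $(x,v)$ with Gronwall; Step 3: $s$ via the flow identity $(X,V)(t,s',x,v)=(X,V)(t,s,(X,V)(s,s',x,v))$; Step 4: triangle inequality), using the explicit integral formulas \eqref{eq:characteristics} throughout. Your emphasis on the preliminary bound $|V(\tau,s,x,v)|\leq C(1+|v|)$ as the source of the $(1+|v|)$ factor is a clean way to phrase what the paper extracts directly from the exponential weights in the formulas, but the argument is the same.
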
 

\begin{proof}
We shall divide the proof in four cases. 
\begin{description}
\item[Step 1] Assume $s=s', \, x=x', \, v=v'$. We can suppose that $t'\leq t$. Then, using (\ref{eq:characteristics}), we write
\begin{eqnarray}
&& V(t,s,x,v) - V(t',s,x,v) \nonumber \\
&& \quad \quad = (e^{-t} - e^{-t'})e^s v + \int_s^t e^{\tau - t} U(\tau,X(\tau,s,x,v)) \ud \tau  \nonumber \\
&& \quad \quad \quad \quad \quad \quad - \int_s^{t'} e^{\tau - t'} U(\tau,X(\tau,s,x,v)) \ud \tau \nonumber \\
&& \quad \quad = (e^{-t} - e^{-t'})e^s v + \int_s^t e^{\tau}(e^{-t} - e^{-t'})U(\tau,X(\tau,s,x,v)) \ud \tau \nonumber \\
&& \quad \quad \quad \quad \quad \quad  -\int_{t'}^t e^{\tau - t'}U(\tau, X(\tau,s,x,v)) \ud \tau.\nonumber 
\end{eqnarray} This yields,
\begin{eqnarray}
&& |V(t,s,x,v) - V(t',s,x,v)| \nonumber \\
&& \quad \quad \leq |e^{-t} - e^{-t'}| \left( e^T|v| + Te^T \|U\|_{\C^{0,1}_{t,x}} \right) + |t-t'|e^T \|U\|_{\C^{0,1}_{t,x}}) \nonumber \\
&& \quad \quad \leq C(T)(1+|v|)(1 + \|U\|_{\C^{0,1}_{t,x}}) |t-t'|.   \nonumber
\end{eqnarray} The same argument gives, through (\ref{eq:characteristics}),
\begin{equation*}
|X(t',s,x,v) - X(t,s,x,v)| \leq C(T)(1+|v|)(1 + \|U\|_{\C^{0,1}_{t,x}}) |t-t'|. 
\end{equation*}
\item[Step 2] Assume $t=t', \, s=s'$. Then, again by (\ref{eq:characteristics}), we have
\begin{eqnarray}
&& X(t,s,x,v) - X(t,s,x',v') \nonumber \\
&& \quad \quad = (x-x') + (1-e^{-t+s})(v-v') \nonumber \\
&& \quad \quad \quad \quad + \int_s^t \int_s^{\sigma} e^{\sigma-\tau} \left( U(\tau,X(\tau,s,x,v)) - U(\tau,X(\tau,s,x',v') \right)  \ud \tau \ud \sigma, \nonumber  
\end{eqnarray} which gives, since $U$ is Lipschitz in $x$,
\begin{eqnarray}
&& |X(t,s,x,v) - X(t,s,x',v')| \nonumber \\
&& \quad \quad \leq  (e^T + 1)\left( |x-x'| + |v-v'|  \right) \nonumber \\
&& \quad \quad \quad \quad + Te^T \| U \|_{\C^{0,1}_{t,x}} \int_s^t \left| X(\tau,s,x,v) - X(\tau,s,x',v') \right| \ud \tau. \nonumber 
\end{eqnarray} Then, by Gronwall's lemma, we obtain
\begin{eqnarray}
&& |X(t,s,x,v) - X(t,s,x',v')| \nonumber \\
&& \quad \quad \leq (e^T + 1 )\left( |x-x'|+|v-v'| \right)(1 + Te^T\|U\|_{\C^{0,1}_{t,x}})Te^{T^2e^T\|U\|_{\C^{0,1}_{t,x}}} \nonumber \\
&& \quad \quad \leq C(T) \left( |x-x'|+|v-v'| \right) (1 + \|U\|_{\C^{0,1}_{t,x}})e^{T^2e^T\|U\|_{\C^{0,1}_{t,x}}}. \nonumber 
\end{eqnarray} This allows to obtain an analogue estimate for $V$.

\item[Step 3] Assume that $t=t', \, x=x', \, v=v'$. We observe that
\begin{equation*}
(X,V)(t,s',x,v) = (X,V)(t,s,X(s,s',x,v),V(s,s',x,v)).
\end{equation*} Thus, Step 2 allows to write
\begin{eqnarray}
&& |(X,V)(t,s,x,v) - (X,V)(t,s',x,v) |\nonumber \\
&& \quad \leq C(T) \left( |x-X(s,s',x,v)|+|v-V(s,s',x,v)| \right) (1 + \|U\|_{\C^{0,1}_{t,x}})e^{T^2e^T\|U\|_{\C^{0,1}_{t,x}}}. \nonumber 
\end{eqnarray} On the other hand, putting $ (x,v) = (X,V)(s',s',x,v)$, Step 1 allows to write
\begin{eqnarray}
&& |x-X(s,s',x,v)|+|v-V(s,s',x,v)| \nonumber \\
&& \quad \quad \leq |s-s'|C(T)(1 + |v|) (1 + \| U\|_{\C^{0,1}_{t,x}}), \nonumber 
\end{eqnarray} which together with the previous inequality gives
\begin{eqnarray}
&& |(X,V)(t,s,x,v) - (X,V)(t,s',x,v) |\nonumber \\
&& \quad \leq C(T) (1 + |v|) (1 + \| U\|_{\C^{0,1}_{t,x}})^2e^{T^2e^T\|U\|} |s-s'|. \nonumber
\end{eqnarray}
\item[Step 4] In the general case, we write
\begin{align*}
& (X,V)(t,s,x,v) - (X,V)(t',s',x',v') \\
& \quad \quad = (X,V)(t,s,x,v) - (X,V)(t,s',x,v) \\
& \quad \quad \quad + (X,V)(t,s',x,v) - (X,V)(t,s',x',v') \\
& \quad \quad \quad + (X,V)(t,s',x',v') - (X,V)(t',s',x',v'),  
\end{align*} which allows to use the previous estimates to conclude.

\end{description}

\end{proof}

\section{Construction of a reference trajectory}

\subsection{Large velocities}
The key result for the treatment of large velocities is the following one, which is an adaptation of \cite[Proposition 1, p. 340]{Glass} to the friction case. We will need some results on harmonic approximation, gathered in the Appendix A.

\begin{proposition}
Let $\tau >0$. Given $x_0 \in \T^2$ and $r_0>0$ a sufficiently small number, there exist $U \in \C^{\infty}([0,\tau]\times \T^2;\R^2)$ and $\underline{m} >0$ such that 
\begin{eqnarray}
&& \curl_x U (t,x) = 0, \quad   \forall (t,x) \in [0,\tau] \times \left(  \T^2 \setminus \ovl{B}(x_0,r_0/10)  \right), \label{eq:psiharmonic}  \\
&& \supp U \subset (0,\tau) \times \T^2, \label{eq:suppoertpsi} \\
&& \int_{\T^2} U(t,x) \ud x = 0, \quad \forall t\in[0,\tau]. \label{eq:integralenulleHV}
\end{eqnarray} Moreover, the characteristics $(X,V)$ associated to the field $-v + U$ satisfy that, for every $m \geq \underline{m}$,
\begin{eqnarray}
&& \forall (x,v) \in \T^2 \times \R^2 \textrm{ with } |v| \geq m, \exists t \in \left( \frac{\tau}{4}, \frac{3\tau}{4} \right) \textrm{ such that } \nonumber \\
&& X(t,0,x,v) \in B \left(x_0, \frac{r_0}{4}  \right) \textrm{ and } |V(t,0,x,v) | \geq \frac{m}{2e^{\tau}}. \label{eq:largevelocitycharacteristics}
\end{eqnarray}
\label{proposition:highvelocities}
\end{proposition}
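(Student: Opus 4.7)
The plan is to adapt the construction of Proposition~1 in \cite{Glass} to the Vlasov-Stokes friction setting, where the characteristic formulas \eqref{eq:characteristics} feature the factors $(1-e^{-t})$ in the spatial displacement and $e^{-t}$ in the velocity decay, in place of the identity factors of the frictionless Vlasov-Poisson case.

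First I would fix a time cutoff $\chi \in \Cinfc((0,\tau))$ with $\chi \equiv 1$ on $[\tau/4, 3\tau/4]$, and look for $U$ of the form
\[
U(t,x) = \chi(t)\, \nabla^\perp \psi(x)
\]
for a stream function $\psi \in \Cinf(\T^2;\R)$ that is harmonic outside $\ovl{B}(x_0, r_0/10)$. This choice automatically yields $\div_x U = 0$, $\curl_x U(t,x) = \chi(t) \Delta \psi(x) = 0$ outside $\ovl{B}(x_0, r_0/10)$, and $\int_{\T^2} U(t,\cdot)\ud x = 0$ by periodicity, so conditions \eqref{eq:psiharmonic}, \eqref{eq:suppoertpsi}, \eqref{eq:integralenulleHV} are satisfied. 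The existence of such a $\psi$ with prescribed trace near $\d B(x_0, r_0/10)$ is furnished by the harmonic approximation results gathered in Appendix~A.

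Second, I would exploit the explicit formulas \eqref{eq:characteristics} to reduce the dynamics to a free-streaming-with-friction arc. Rewriting
\[
X(t,0,x,v) = x + (1-e^{-t}) v + \int_0^t (1 - e^{\tau - t})\, U(\tau, X(\tau,0,x,v))\, \ud \tau,
\]
one sees that for $|v|$ large the leading term is the arc $x + (1-e^{-t})v$ on $\T^2$, while the perturbation is $O(\tau \| U\|_{L^\infty})$ with regularity in $(x,v)$ controlled by Lemma \ref{lemma:Gronwall}. For $t\in (\tau/4, 3\tau/4)$ this leading arc has length comparable to $|v|$. The velocity bound in \eqref{eq:largevelocitycharacteristics} is then immediate: from $V(t,0,x,v) = e^{-t}v + \int_0^t e^{\tau-t} U\,\ud \tau$ one obtains $|V(t,0,x,v)| \geq e^{-\tau}|v| - C\tau \|U\|_{L^\infty}$, so choosing $\underline m$ large enough forces $|V| \geq m/(2 e^\tau)$ whenever $|v|\geq m \geq \underline m$.

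The crucial third step is to choose $\psi$ so that the perturbed trajectory actually enters $B(x_0, r_0/4)$ at some time in $(\tau/4, 3\tau/4)$, uniformly in $(x,v)$ with $|v|\geq \underline m$. For ``generic'' directions $\hat v = v/|v|$, the free-flow arc of length $\sim|v|$ equidistributes well enough on $\T^2$ to cross the ball as soon as $|v|$ is large; the delicate case is that of ``bad'' directions for which the free orbit forms a sparse periodic grid that may miss $B(x_0, r_0/4)$. I expect the main technical obstacle to be the harmonic construction of $\psi$ whose stream field $\nabla^\perp \psi$ on $\T^2 \setminus \ovl{B}(x_0, r_0/10)$ produces a uniform deflection strong enough to bring every such exceptional trajectory into the target ball within the prescribed time window, uniformly in $\hat v \in \Sph^1$. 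Once the required $\psi$ is built, a compactness argument on $\Sph^1$ combined with the continuity estimates of Lemma \ref{lemma:Gronwall} should yield the uniform threshold $\underline m$ and close the argument.
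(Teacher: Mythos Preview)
Your setup for \eqref{eq:psiharmonic}--\eqref{eq:integralenulleHV} and the velocity lower bound in \eqref{eq:largevelocitycharacteristics} is correct and matches the paper. The gap is in the third step: the architecture you propose --- a \emph{single} stream function $\psi$ multiplied by a cutoff $\chi$ that is identically $1$ on $[\tau/4,3\tau/4]$ --- is not what the paper does, and it is unlikely to close.

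The paper's construction is genuinely time-structured. There are only finitely many bad directions $e_1,\dots,e_N$ (Definition~\ref{definition:baddirections}), and the interval $[\tau/4,3\tau/4]$ is partitioned into subintervals $[t_i,t_{i+1}]$, one per bad direction. On each $[t_{i+1/4},t_{i+1/2}]$ the field is a short, intense pulse
\[
U(t,x)=\frac{A}{\nu}\,\eta\!\left(\frac{t-t_{i+1/4}}{\nu}\right)\nabla^{\perp}\theta^i(x),
\]
where $\theta^i$ is the harmonic approximant of the linear function $\langle x,e_i\rangle$ furnished by Proposition~\ref{proposition:highvelocitiesapproximation}, so that $\nabla^{\perp}\theta^i\approx e_i^{\perp}$ outside a thin tube. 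Outside these pulse windows, $U\equiv 0$. Good directions are handled on $[t_0,t_1]$ by the free friction flow $x+(1-e^{-t})v$; for a direction close to $e_i$, the pulse during $[t_{i+1/4},t_{i+1/2}]$ injects an impulse of size $\approx A$ in the direction $e_i^{\perp}$, pushing the trajectory into the tube $B(x_0,r_0/5)+\R e_i$, after which a free-flow comparison finishes the job.

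A single $\psi$ cannot play this role: to deflect a trajectory travelling along $e_i$ you need $\nabla^{\perp}\psi$ to approximate $e_i^{\perp}$ along that trajectory, and $\psi$ cannot simultaneously approximate all the distinct linear functions $\langle x,e_i\rangle$ on the relevant regions. Moreover, with $\chi\equiv 1$ on the whole window the particle moves a distance of order $|v|$ while the field acts, so the impulse is a line average of $\nabla^{\perp}\psi$ rather than a pointwise kick; you lose control of its direction. The compactness-on-$\Sph^1$ idea does not repair this, because the required field depends on the direction. What you are missing is precisely the time-sequenced, direction-by-direction pulse construction and the associated case analysis (good directions; $x$ already in the tube $B(x_0,r_0/5)+\R e_i$; general $x$ with direction near $e_i$).
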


Following O. Glass (see \cite{Glass}), we characterise the bad directions in the following sense.
\begin{definition}[Bad directions]
Given $x_0 \in \T^2$ and $r_0>0$ a small number, $e\in \Sph^1$ is a bad direction if 
\begin{equation*}
\left\{ (x+te); \, t\in \R^+ \right\} \cap B(x_0, r_0/4) = \emptyset,
\end{equation*} in the sense of $\T^2$.
\label{definition:baddirections} 
\end{definition}

\begin{remark}
It can be shown, thanks to B\'{e}zout's theorem, that for any $x_0 \in \T^2$ and $r_0>0$ small, there exists only a finite number of such bad directions, namely $\left\{ e_1, \dots e_N  \right\}$ (see \cite[Appendix A, p. 373]{Glass}). 
\end{remark}

\begin{proof}[Proof of Proposition \ref{proposition:highvelocities}]
Given $\tau >0$ and $N\in \N^*$, the number of bad directions, let us define 
\begin{equation}
t_j := \frac{\tau}{4} + \frac{j\tau}{2(N+1)}, \quad \forall j \in \left\{ i + \frac{k}{4}; \,\, i=0,\dots , N, \, k=0,1,2,3 \right\}.
\label{eq:tis}
\end{equation} We consider
\begin{equation} 
\eta \in \C^{\infty}_c(0,1) \textrm{ such that } 0\leq \eta \leq 1, \textrm{ and } \int_0^1 \eta(t) \ud t = 1.
\label{eq:eta}
\end{equation} Let $A,\nu >0 $ so that 
\begin{equation}
\nu <\frac{\tau}{8(N+1)}, \quad A >  e^{\frac{\tau}{8(N+1)}} \left( \frac{12(N+1)}{\tau} + 2  + \frac{\tau}{4(N+1)} \right).
\label{eq:choiceofA}
\end{equation} Let $\epsilon>0$ to be chosen later on and set, according to Proposition \ref{proposition:highvelocitiesapproximation}, the following vector field
\begin{equation}
\left\{ \begin{array}{ll}
U(t,x) = \frac{A}{\nu} \eta \left( \frac{t - t_{i + \frac{1}{4}}}{\nu}  \right) \nabla^{\perp} \theta^i(x), & (t,x) \in [t_{i+\frac{1}{4}},t_{i + \frac{1}{2}}] \times \T^2, \\
U(t,x) = 0, & \textrm{otherwise}.
\end{array} \right.
\label{eq:choiceoflargevelocityfield}
\end{equation} With this definition, we readily have (\ref{eq:psiharmonic}) and (\ref{eq:suppoertpsi}), using (\ref{eq:largephiharmonic}) and (\ref{eq:eta}). Condition (\ref{eq:integralenulleHV}) follows from the definition of $U$ above. We have to show that (\ref{eq:largevelocitycharacteristics}) is satisfied with this construction. \par 
To prove the second point of (\ref{eq:largevelocitycharacteristics}), (\ref{eq:characteristics}) yields
\begin{eqnarray}
|V(t,0,x,v)| &\geq & e^{-t}|v| - \tau \|U\|_{\C^0_{t,x}} \nonumber \\
& \geq & e^{-\tau}m - \tau \|U\|_{\C^0_{t,x}} \,  \geq e^{-\tau}\frac{m}{2}, \nonumber 
\end{eqnarray} provided that $m$ is large enough. \par
To show the first part of (\ref{eq:largevelocitycharacteristics}), we distinguish several cases, according to $(x,v) \in \T^2 \times \R^2$. More precisely, if $e= \frac{v}{|v|} \in \Sph^1$, we shall show that
\begin{enumerate}
\item if $x\in\T^2$ and $e \in \Sph^1\setminus \left\{ e_1,\dots, e_N \right\}$, with $|v|$ large enough, then (\ref{eq:largevelocitycharacteristics}) follows by comparison with the free transport without friction,
\item if $x \in B(x_0, \frac{r_0}{5}) + \R e_i $ and $ e \in B_{\Sph^1}(e_i;\epsilon_1)$, for some $i\in \left\{ 1,\dots,N \right\}$ and $\epsilon_1>0$, then (\ref{eq:largevelocitycharacteristics}) still holds if $|v|$ is large enough,
\item if $x\in \T^2$, $ e \in B_{\Sph^1}(e_i;\epsilon_2)$ for some $i\in \left\{ 1,\dots,N \right\}$ and $\epsilon_2>0$ small enough, then for $|v|$ large enough, we shall deduce that $(X,V)(\sigma,0,x,v)$, with some $\sigma\in[t_i,t_{i+\frac{3}{4}}],$ satisfies the hypotheses of the case (2).
\end{enumerate} Let us now treat in detail the three cases below.

\begin{description}
\item[First case] Let $e\in \Sph^1 \setminus \left\{ e_1,\dots, e_N \right\}$. Since $e$ is not a bad direction, from Definition \ref{definition:baddirections}, there exists  $m>0$ such that if $|v|\geq m$, then, 
\begin{equation*}
x + t|v|e \in B\left( x_0, \frac{r_0}{4} \right),
\end{equation*}for a certain $t\in [t_0,t_1]$. In particular, $\exists t_{trans} \in [t_0,t_1]$ such that 
\begin{equation}
x + t_{trans}(1+m)e \in B\left( x_0, \frac{r_0}{4} \right).
\label{eq:ttilde}
\end{equation} We shall prove next that, by augmenting the minimal speed required, we can conclude in the friction case. Indeed, consider 
\begin{equation*}
m_0:= \frac{t_1 (1 + m)}{1 - e^{-t_1}}. 
\end{equation*} and for any $v^{\flat} \in \R^2$ with $|v^{\flat}|\geq m_0$, set the function
\begin{equation*}
t \mapsto f_{\flat}(t):=(1 - e^{-t})|v^{\flat}|,
\end{equation*} which is continuous from $\R^+$ to $\R^+$. Since $f_{\flat}(0) = 0 $ and $f_{\flat}(t_1) \geq t_1 (1+m)$, by the intermediate value theorem, $\exists t^*\in(0,t_1)$ such that $f_{\flat}(t^*) = t_{trans}(1+m)$. Whence, by (\ref{eq:ttilde}),
\begin{equation}
x + (1-e^{-t^*})|v^{\flat}|e = x + f_{\flat}(t^*)e=x + t_{trans}(1+m)e \in B\left(x_0, \frac{r_0}{4} \right).
\end{equation}   This shows (\ref{eq:largevelocitycharacteristics}) in this case, since $U(t,x) =0 $ for $t\in [t_0,t_1]$, and thus,  $X(t^*,0,x,v) = x + (1-e^{-t^*})v$.

\item[Second case]
Let us suppose that $x \in B(x_0, \frac{r_0}{5}) + \R e_i$ for some $i \in \left\{ 1,\dots,N\right\}$. Let $e \in \V_1(e_i):=B_{\Sph^1}(e_i;\epsilon_1)$. If $\epsilon_1>0$ is small enough and $|v|\geq m_1 $ is large enough, then there exists  $0< s\leq t \leq \frac{C}{|v|}$, for some constant $C>0$, independent of $(x,e) \in \T^2 \times \Sph^1$, such that
\begin{equation*}
x + (1-e^{-t+s})|v|e \in B\left(x_0, \frac{9r_0}{40}  \right).   
\end{equation*} To justify this, we point out that this holds for the free transport (see \cite[p.375]{Glass}), which implies, by a similar argument as before, that this also holds in the friction case. Whence, from (\ref{eq:characteristics}) and for any $s',s''\in [0,\tau]$,   
\begin{equation*}
|X(s'',s',x,v) - x - (1-e^{-s' +s''})v | \leq C(T,\|U\|_{\C^0_{t,x}})|s'-s''| = O\left( \frac{1}{m_1} \right). 
\end{equation*}   Then, if $m_1$ is large enough, this entails in particular
\begin{equation*}
X(s,t_i,x,v) \in B\left( x_0, \frac{r_0}{4} \right), \textrm{ for some } s\in [t_i,t_{i+1}].
\end{equation*}

\item[Third case]
Let $x\in\T^2$, $|v|\geq m_2$, large enough. Let $e\in \mathcal{V}_2(e_i):=B_{\Sph^1}(e_i;\epsilon_2)$, for some $i\in \left\{1, \dots, N \right\}$ and $\epsilon_2>0$ small enough. We have to show that $\exists t \in [t_i, t_{i+\frac{3}{4}}]$ such that
\begin{eqnarray}
&& X(t,t_i,x,v) \in B\left(x_0, \frac{r_0}{5} \right) + \R e_i,
\label{eq:Xarrivesincase3} \\
&& \frac{V(t,t_i,x,v)}{|V(t,t_i,x,v)|} \in B_{\Sph^1}(e_i;\epsilon_1), \textrm{ with } |V(t,t_i,x,v)|\geq m_1.\label{eq:Varrivesincase3}
\end{eqnarray} Then, we can use the analysis of the second case to conclude. \par 
 We prove first (\ref{eq:Xarrivesincase3}) by means of the orthogonal projection on the direction $e_i^{\perp}$. Let 
\begin{equation*}
\begin{array}{llll}
P_{e_i^{\perp}} :& \R^2 & \rightarrow & \R \\
                 & v & \mapsto & P_{e^{\perp}_i}(v):= \langle v, e_i^{\perp} \rangle.
\end{array}
\end{equation*} We distinguish two cases. \par 
Firstly, let $v\in \R^2$ such that $|P_{e^{\perp}_i}(v)| > \frac{6(N+1)}{\tau}$. Then, for a large enough speed $|v| \geq m_2$, (\ref{eq:Xarrivesincase3}) is satisfied, as follows by comparison with the free transport case. \par
Secondly, if $|P_{e_i^{\perp}}(v)| < \frac{6(N+1)}{\tau}$, let us suppose that $X(t,t_i,x,v)$ does not meet $B(x_0, r_0/5) + \R e_i$ during $[t_{i+\frac{1}{4}},t_{i+\frac{1}{2}}]$. Then, by (\ref{eq:choiceoflargevelocityfield}) and Proposition \ref{proposition:highvelocitiesapproximation}, we have
\begin{eqnarray}
&& \left\| U(t,X(t,t_i,x,v)) - \frac{A}{\nu} \eta\left( \frac{t - t_{i + \frac{1}{4}}}{\nu} \right)e_i^{\perp} \right\|_{\C^0_{t,x}} \label{eq:UAmulessthanone} \\
&& \quad \quad \quad \quad \quad \quad \quad \leq  \frac{A}{\nu} \| \eta\|_{\C^0_t} \| \nabla^{\perp} \theta^i - e_i^{\perp} \|_{\C^0_x} < \frac{A\epsilon}{\nu}. \nonumber 
\end{eqnarray} We choose $\epsilon>0$ small enough so that $\frac{A\epsilon}{\nu} <1$. Then, by (\ref{eq:characteristics}), 
\begin{eqnarray}
&& |P_{e_i^{\perp}}(V(t_{i+\frac{1}{2}},t_i,x,v))| = |\langle V(t_{i+\frac{1}{2}},t_i,x,v), e_i^{\perp} \rangle | \nonumber \\
&& \quad \, \geq  -|P_{e_i^{\perp}}(v)|(t_{i+\frac{1}{2}} - t_i) + \left| \left \langle \int_{t_{i+\frac{1}{4}}}^{t_{i + \frac{1}{2}}} e^{s - t_{i+\frac{1}{2}}} U(s, X(s,t_i,x,v) \ud s  , e_i^{\perp} \right \rangle  \right|. \label{eq:projectionbase}
\end{eqnarray} For the first term, by (\ref{eq:tis}) and the hypothesis on $P_{e_i^{\perp}}(v)$,
\begin{equation}
-|P_{e_i^{\perp}}(v)|(t_{i+\frac{1}{2}} - t_i) = - |P_{e_i^{\perp}}(v)| \frac{\tau}{4(N+1)} \geq  -\frac{3}{2}.
\label{eq:projection1}
\end{equation} For the second term, we write
\begin{align}
&\left| \left \langle \int_{t_{i+\frac{1}{4}}}^{t_{i + \frac{1}{2}}} e^{s - t_{i+\frac{1}{2}}} U(s, X(s,t_i,x,v)) \ud s  , e_i^{\perp} \right \rangle  \right| \nonumber \\
& \quad \quad \quad \quad \quad = \left| \langle \trans_1 + \trans_2, e^{\perp}_i \rangle \right| \geq -|\langle \trans_1, e^{\perp}_i \rangle | + |\langle \trans_2, e^{\perp}_i \rangle |,   
\label{eq:CSprojection} 
\end{align} with
\begin{eqnarray}
\trans_1 &:=& \int_{t_{i+\frac{1}{4}}}^{t_{i + \frac{1}{2}}} e^{s-t_{i+\frac{1}{2}}} \left(  U(s, X(s,t_i,x,v)) -   \frac{A}{\nu} \eta\left( \frac{s - t_{i+\frac{1}{4}}}{\nu}\right) e^{\perp}_i \right) \ud s, \nonumber \\
\trans_2 & := & \frac{A}{\nu} \int_{t_{i+\frac{1}{4}}}^{t_{i + \frac{1}{2}}}  e^{s-t_{i+\frac{1}{2}}} \eta\left( \frac{s -t_{i+\frac{1}{4}}}{\nu}\right) e_i^{\perp} \ud s . \nonumber   
\end{eqnarray} Then, using (\ref{eq:UAmulessthanone}) and (\ref{eq:tis}),
\begin{eqnarray}
|\langle \trans_1, e^{\perp}_i \rangle | &\leq & 2 (t_{i+\frac{1}{2}} - t_{i+\frac{i}{4}})  \left\| U - \frac{A}{\nu}\eta e_i^{\perp} \right\|_{\C^0_{t,x}} \nonumber \\
& \leq & \frac{\tau}{4(N+1)} \label{eq:T1}
\end{eqnarray} For the second term, using (\ref{eq:tis}) and (\ref{eq:eta}), we get
\begin{equation*}
|\langle \trans_2 , e_i^{\perp} \rangle| \geq e^{-\frac{\tau}{8(N+1)}} A  
\end{equation*} Combining last inequality with (\ref{eq:T1}), (\ref{eq:CSprojection}), (\ref{eq:projection1}) and (\ref{eq:projectionbase}), the choice of $A$ and $\nu$ in (\ref{eq:choiceofA}), this yields
\begin{equation*}
|P_{e_i^{\perp}}(V(t_{i+\frac{1}{2}},t,x,v))| \geq \frac{6(N+1)}{\tau}.
\end{equation*} Whence, by the previous point, $\exists t \in [t_{i+\frac{i}{2}}, t_{i+ \frac{3}{4}}]$ such that 
\begin{equation*}
X(t,t_i,x,v) \in B\left( x_0, \frac{r_0}{5} \right) + \R e_i, 
\end{equation*} which shows (\ref{eq:Xarrivesincase3}). We have to show next (\ref{eq:Varrivesincase3}). Indeed, from (\ref{eq:characteristics}), we deduce
\begin{equation*}
|V(t,t_i,x,v) - e^{-t+t_i}v| \leq |t-t_i| \| U\|_{\C^0_{t,x}}, 
\end{equation*}  which implies, by (\ref{eq:tis}), that
\begin{equation}
e^{-t+t_i}|v| - |V(t,t_i,x,v)| \leq \frac{3\tau \| U \|_{\C^0_{t,x}}}{8(N+1)} . 
\label{eq:largevelocityremainslarge}
\end{equation} Then, choosing $m_2$ large enough, we get the second point of (\ref{eq:Varrivesincase3}). On the other hand, we observe that
\begin{equation}
\left| \frac{V(t,t_i,x,v)}{|V(t,t_i,x,v)|} - e_i \right| \leq \left| \frac{V(t,t_i,x,v)}{|V(t,t_i,x,v)|} - \frac{v}{|v|}  \right| + \left| \frac{v}{|v|} - e_i \right|.
\label{eq:voisinagesphere}
\end{equation} By definition of $\V_2(e_i)$, $\left| \frac{v}{|v|} - e_i \right| < \epsilon_2$. With the other term, using (\ref{eq:characteristics}), we find
\begin{eqnarray}
&& \left| \frac{V(t,t_i,x,v)}{|V(t,t_i,x,v)|} - \frac{v}{|v|}  \right| \nonumber \\
&& = \left| \frac{e^{-t+t_i}|v| +\int_{t_i}^t e^{t-s}U(s,X(s,t_i,x,v))\ud s}{|V(t,t_i,x,v)|} - \frac{v}{|v|} \right|  \nonumber \\
&& \leq \left| \frac{e^{-t+t_i}v}{|V(t,t_i,x,v)|} - \frac{v}{|v|} \right| + \frac{\tau e^{\tau} \|U\|_{\C^0_{t,x}}}{|V(t,t_i,x,v)|} \nonumber \\
&& =  \frac{\left| e^{-t+t_i}|v| - |V(t,t_i,x,v)|  \right|}{|V(t,t_i,x,v)|}   + \frac{\tau e^{\tau} \|U\|_{\C^0_{t,x}}}{|V(t,t_i,x,v)|} \nonumber \\
&& \leq \frac{2 \tau e^{\tau}\| U \|_{\C^0_{t,x}}}{|V(t,t_i,x,v)|}. \nonumber
\end{eqnarray} This shows, by (\ref{eq:voisinagesphere}), that
\begin{equation*}
\left| \frac{V(t,t_i,x,v)}{|V(t,t_i,x,v)|} - e_i \right| \leq O \left(\frac{1}{|V(t,t_i,x,v)|} \right) + \epsilon_2,
\end{equation*} which entails, by (\ref{eq:largevelocityremainslarge}), that (\ref{eq:Varrivesincase3}) holds choosing $m_2$ large enough and $\epsilon_2>0$ small enough.
\end{description}

\end{proof}


\subsection{Low velocities}
The goal of this section is to prove the following result, which is the key ingredient for the treatment of low velocities. The main difficulty is to adapt the construction made in \cite[Proposition 2]{Glass} to the case with friction.

\begin{proposition}
Let $\tau >0$, $M>0$. Given $x_0 \in \T^2$ and $r_0>0$ a small positive number, there exists $U \in \C^{\infty}([0,\tau] \times \T^2;\R^2)$ satisfying
\begin{eqnarray}
&& \curl_x U(t,x) = 0, \quad   (t,x) \in [0,\tau] \times \left(  \T^2 \setminus \overline{B}(x_0,r_0)  \right), \label{eq:lowpsiharmonic}  \\
&& \supp U \subset (0,\tau) \times \T^2, \label{eq:lowsuppoertpsi} \\
&& \int_{\T^2} U(t,x) \ud x = 0, \quad \forall t\in [0,\tau], \label{eq:integralenulleBV}
\end{eqnarray} and such that, for some $\ovl{M}>0$, the characteristics associated to $-v + U$ satisfy that, for every $(x,v) \in \T^2\times B_{\R^2}(0,M)$, there exists $ t \in( 0,\tau)$ such that
\begin{equation}
V(t,0,x,v) \in B_{\R^2}(0, \ovl{M}) \setminus B_{\R^2}(0,M+1).   
\label{eq:acceleration} 
\end{equation}
\label{proposition:lowvelocities}
\end{proposition}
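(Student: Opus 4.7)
The plan is to adapt the construction of \cite[Proposition 2]{Glass} to the friction setting. The new difficulty is that, after an impulsive kick, friction exponentially damps the gained velocity, so one must design the field so every low-velocity particle has its speed boosted above $M+1$ at some instant; once this is achieved, the intermediate value theorem produces a time at which $|V|$ lies in the annulus $\{M+1 \leq |V| < \ovl{M}\}$.

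Using the harmonic approximation of Appendix A, I would select two functions $\theta_1, \theta_2 \in \Cinf(\T^2;\R)$ harmonic on $\T^2 \setminus \ovl{B}(x_0,r_0)$ with $\nabla^\perp \theta_k$ close to a prescribed unit direction on $\T^2 \setminus B(x_0,r_0)$, arranged so that their critical sets $\mathcal{C}_k := \{y\in\T^2 : \nabla\theta_k(y) = 0\}$ (necessarily contained in $\ovl{B}(x_0,r_0)$ once the approximation error is less than $1$) are mutually disjoint. The velocity field is then defined as a superposition of two impulsive kicks, in the style of (\ref{eq:choiceoflargevelocityfield}),
\begin{equation*}
U(t,x) := \frac{A}{\nu}\,\eta\!\left(\frac{t-t_1}{\nu}\right) \nabla^\perp \theta_1(x) + \frac{A}{\nu}\,\eta\!\left(\frac{t-t_2}{\nu}\right) \nabla^\perp \theta_2(x),
\end{equation*}
with $\eta \in \Cinfc((0,1))$, $\int \eta = 1$, times $0 < t_1 < t_2 < \tau$ with $t_2-t_1$ small, and parameters $A \gg 1$, $\nu \ll 1$ to be tuned. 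Properties (\ref{eq:lowpsiharmonic})--(\ref{eq:integralenulleBV}) are then immediate from the construction.

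The core of the proof is an analysis of the characteristics split according to the position $y_1 := X(t_1,0,x,v)$. If $y_1$ lies outside a small tubular neighborhood of $\mathcal{C}_1$ of radius $\rho_0$, then $|\nabla^\perp \theta_1(y_1)|$ is bounded below, and formula (\ref{eq:characteristics}) yields, right after the first kick, $|V(t_1+\nu,0,x,v)| \geq M+2$ for $A$ large enough. Otherwise $y_1$ is within $\rho_0$ of $\mathcal{C}_1$; the first kick is then weak, producing a velocity bounded by a constant $M_1$ that is independent of $\rho_0$ if one chooses $\rho_0$ in balance with $A$ (exploiting that $\nabla^\perp \theta_1$ is Lipschitz and vanishes on $\mathcal{C}_1$). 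Taking $t_2-t_1$ small enough ensures $X(t_2)$ remains within $2\rho_0$ of $\mathcal{C}_1$ and hence, by the separation of the critical sets, at distance $\geq \dist(\mathcal{C}_1,\mathcal{C}_2)/2$ from $\mathcal{C}_2$; the second kick therefore succeeds and yields $|V(t_2+\nu,0,x,v)| \geq M+2$ (again for $A$ large). In either case, the continuous map $t \mapsto |V(t,0,x,v)|$ starts at $|v| \leq M$ and reaches at least $M+2$, so by the intermediate value theorem it equals $M+1$ at some $t^* \in (0,\tau)$; taking $\ovl{M}$ to be any uniform upper bound on $|V(t,0,x,v)|$ over the compact set of initial data $\T^2 \times B_{\R^2}(0,M)$ (finite since $U$ is bounded) gives $V(t^*,0,x,v) \in B_{\R^2}(0,\ovl{M}) \setminus B_{\R^2}(0,M+1)$.

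The main obstacle is the simultaneous construction of the pair $(\theta_1,\theta_2)$: the harmonic approximation pins down their behavior outside $B(x_0,r_0)$ but one also needs to arrange that their critical sets inside the hole are disjoint. I expect this to follow from a mild refinement of the harmonic approximation lemma (exploiting the freedom in the Dirichlet data on $\partial B(x_0,r_0)$) or a generic perturbation argument, disjointness of finite sets being an open and stable condition.
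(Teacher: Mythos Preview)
Your scheme differs from the paper's and is more intricate than needed. The paper does not juggle two potentials with disjoint critical sets; it invokes Proposition~\ref{proposition:auxiliarylowvelocities} (Glass's Lemma~3), which furnishes a \emph{single} $\theta \in \Cinf(\T^2;\R)$ harmonic outside $B(x_0,r_0)$ with $|\nabla\theta|>0$ there, and then observes that $\mathrm{Ind}_{S(x_0,r_0)}(\nabla\theta)=0$, so after a modification inside the ball one may take $m:=\inf_{\T^2}|\nabla\theta|>0$. A \emph{single} impulsive kick $U(t,x)=a\,\eta(bt)\,\nabla^\perp\theta(x)$ then suffices: with $c:=a/b$ fixed and $b$ large, a direct estimate on (\ref{eq:characteristics}) gives $\bigl|V(1/b,0,x,v)-e^{-1/b}v-c\nabla^\perp\theta(x)\bigr|<1/2$, whence $|V(1/b,0,x,v)|\geq cm-M-1/2>M+3/2$ once $c=2(M+1)/m$. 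No case split, no second kick, no intermediate value theorem.

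Your two-kick route can be made to work, but two points need repair. First, the parameter logic is tangled: if you ``balance $\rho_0$ with $A$'' so that $M_1$ stays bounded, then in Case~1 the lower bound $c_{\rho_0}:=\inf\{|\nabla\theta_1(y)|:\dist(y,\mathcal{C}_1)\geq\rho_0\}$ shrinks with $\rho_0$, and $A\,c_{\rho_0}$ need not exceed $M+2$ unless the critical points of $\theta_1$ are non-degenerate. The clean fix is to fix $\rho_0$ once (small relative to $\dist(\mathcal{C}_1,\mathcal{C}_2)$), accept that $M_1$ grows like $A\rho_0$, and note that the second kick still dominates because its gain is at least $A$ times a constant independent of $A$. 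Second, arranging disjoint critical sets is genuine extra work: Proposition~\ref{proposition:highvelocitiesapproximation} says nothing about $\theta_k$ inside $B(x_0,r_0)$, so you must separately perturb to Morse and then shift one of them; this is exactly the complication that the paper's use of Proposition~\ref{proposition:auxiliarylowvelocities} together with the index-zero argument is designed to sidestep.
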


\begin{proof}
Let $\theta$ be as in Proposition \ref{proposition:auxiliarylowvelocities}. Since $\textrm{Ind}_{S(x_0,r_0)}(\nabla \theta) =0$, possibly after a continuous extension, we may define
\begin{equation}
m:= \inf_{x\in\T^2} |\nabla \theta (x)| >0.
\label{eq:theatnonnul}
\end{equation} Let $a, b \in \R$, to be chosen later on, and such that $c:= \frac{a}{b}$ is fixed. Let $\eta$ be as in (\ref{eq:eta}). Then, we define the field
\begin{equation*}
U(t,x) := a \eta(bt) \nabla^{\perp} \theta(x), \quad \forall (t,x) \in [0,\tau] \times \T^2.  
\end{equation*} Conditions (\ref{eq:lowpsiharmonic})--(\ref{eq:integralenulleBV}) follow from the definition of $U$ and the properties of $\theta$, given by Proposition \ref{proposition:auxiliarylowvelocities}. We have to show (\ref{eq:acceleration}).
Indeed, from (\ref{eq:characteristics}), we find that, for every $(x,v) \in \T^2 \times B_{\R^2}(0,M)$,
\begin{equation}
V(t,0,x,v) = e^{-t}v + a \int_0^t e^{\sigma - t} \eta(b\sigma) \nabla^{\perp} \theta(X(\sigma,0,x,v)) \ud \sigma.
\label{eq:lowvelocitiescharacteristics}
\end{equation} This gives, changing variables and using (\ref{eq:eta}),
\begin{eqnarray}
| V(t,0,x,v) - e^{-t}v| & \leq & \frac{a}{b} \| \nabla^{\perp} \theta\|_{\C^0_x} \int_0^{bt} e^{\frac{s}{b}-t} \eta(s) \ud s \nonumber \\
& \leq & c \|\nabla \theta \|_{\C^0_x}, \nonumber   
\end{eqnarray} whenever $t\leq \frac{1}{b}$. Consequently,
\begin{eqnarray}
|X(t,0,x,v) - x | & \leq & \int_0^t | V(s,0,x,v) - e^{-s}v | \ud s + \int_0^t |e^{-s}v| \ud s \nonumber \\
& \leq & \frac{c}{b}\|\theta\|_{\C^{1}_{x}} + \frac{M}{b}, \label{eq:majorationXlow}
\end{eqnarray} for every $t \leq \frac{1}{b}$. Thus, from (\ref{eq:lowvelocitiescharacteristics}),
\begin{eqnarray}
&& \left|V(\frac{1}{b},0,x,v) - e^{-\frac{1}{b}}v - c \nabla^{\perp} \theta(x) \right| \nonumber \\
&& = \left| a\int_0^{\frac{1}{b}} e^{s-\frac{1}{b}} \eta(bs) \nabla^{\perp}\theta (X(s,0,x,v)) \ud s - c \nabla^{\perp}\theta (x)  \right| \leq  I_1 + I_2, \nonumber
\end{eqnarray} with
\begin{align*}
&I_1 := \left| a\int_0^{\frac{1}{b}} e^{s - \frac{1}{b}} \eta(bs) \nabla^{\perp} \theta(X(s,0,x,v))\ud s - c\int_0^1 e^{\frac{\sigma-1}{b}} \eta (\sigma) \nabla^{\perp}\theta(X(\frac{\sigma}{2b},0,x,v)) \ud \sigma \right|, \nonumber \\
&I_2 := \left| c\int_0^1 e^{\frac{\sigma-1}{b}} \eta (\sigma) \nabla^{\perp}\theta(X(\frac{\sigma}{2b},0,x,v)) \ud \sigma - c\nabla^{\perp} \theta(x)   \right|. \nonumber
\end{align*} We note that the introduction of the term $X(\frac{\sigma}{2b},0,x,v)$ is intended to take into account the exponential in the first integral. \par 
For the first term, we write, changing variables,
\begin{eqnarray}
I_1 & = &  \left| a \int_0^{\frac{1}{b}} e^{s-\frac{1}{b}} \eta(bs) \left( \nabla^{\perp}\theta(X(s,0,x,v)) - \nabla^{\perp}\theta(X(\frac{s}{2},0,x,v))   \right) \ud s   \right| \nonumber \\
& \leq & a \| \theta \|_{\C^1_x} \int_0^{\frac{1}{b}}  e^{s-\frac{1}{b}} \eta(bs) \left|X(s,0,x,v) - X(\frac{s}{2},0,x,v) \right| \ud s \nonumber \\
& \leq & 2c \| \theta \|_{\C^1_x} \left( \frac{c}{b} \| \theta \|_{\C^1_x} + \frac{M}{b} \right) = O\left( \frac{1}{b} \right), \nonumber
\end{eqnarray} as a consequence of (\ref{eq:majorationXlow}). For the second term, by (\ref{eq:eta}), we have
\begin{eqnarray}
I_2 & \leq & \left| c\int_0^1 e^{\frac{\sigma -1}{b}}\eta(\sigma) \left( \nabla^{\perp}\theta ( X(\frac{\sigma}{2b},0,x,v)) - \nabla^{\perp}\theta(x) \right) \ud \sigma\right| \nonumber \\
&& \quad \quad \quad \quad + \left| c\int_0^1 \left( e^{\frac{\sigma - 1}{b}} - 1 \right) \eta(\sigma) \nabla^{\perp} \theta(x) \ud \sigma  \right| \nonumber \\
& \leq &  c \| \theta \|_{\C^1_x} \int_0^1 e^{\frac{\sigma-1}{b}} \eta(\sigma) \left|X(\frac{s}{2},0,x,v) - x \right| \ud s + c \| \theta \|_{\C^1_x} |e^{-\frac{1}{b}} - 1 |\nonumber \\
& \leq & c\| \theta \|_{\C^1_x} \left( \frac{c}{b}\| \theta \|_{\C^1_x} + \frac{M}{b} \right) + O\left(\frac{1}{b} \right) = O\left( \frac{1}{b} \right), \nonumber
\end{eqnarray} using again (\ref{eq:majorationXlow}). This allows to choose $b$ large enough so that
\begin{equation*}
\left|V(\frac{1}{b},0,x,v) - e^{-\frac{1}{b}}v - c \nabla^{\perp} \theta(x) \right| < \frac{1}{2}.
\end{equation*} Whence, by (\ref{eq:theatnonnul}),
\begin{eqnarray}
\frac{1}{2} & > & \left|V(\frac{1}{b},0,x,v) - e^{-\frac{1}{b}}v - c \nabla^{\perp} \theta(x) \right| \nonumber \\
&\geq &c|\nabla^{\perp} \theta(x)| - \left| V(\frac{1}{b},0,x,v) \right| - |v| \nonumber \\
& \geq & cm - \left| V(\frac{1}{b},0,x,v) \right|  - M, \nonumber     
\end{eqnarray} which gives, choosing $c:=\frac{2(M+1)}{m}$, 
\begin{equation*}
\left| V(\frac{1}{b},0,x,v) \right| > M + \frac{3}{2}.
\end{equation*} This concludes the proof.

\end{proof}

\subsection{Description of the reference trajectory}
Since $\omega$ is a nonempty open set in $\T^2$, there exist $x_0 \in \omega$ and $r_0>0$ such that
\begin{equation*}
B(x_0, 2r_0) \subset \omega.
\end{equation*} We can define a suitable vector field $\ovl{U}$ using the constructions made in the previous sections. Firstly, we apply Proposition \ref{proposition:highvelocities} with $\tau := \frac{T}{3}$, which gives a vector field $\ovl{U}_1$ and $\udl{m}_1>0$ such that (\ref{eq:largevelocitycharacteristics}) is verified. \par 
For reasons that will be clear in Section 5, we set the following parameters. Let
\begin{equation}
\alpha:=\max \left\{ T\| \ovl{U}_1\|_{\C^{0,1}_{t,x}} + \frac{5}{2}, \frac{\mathcal{C}_{r_0,T}\| \ovl{U}_1\|_{\C^{0,1}_{t,x}}}{4} \right\},  
\label{eq:choiceofalpha}
\end{equation} where $\mathcal{C}_{r_0,T}>0$ is a constant chosen large enough so that
\begin{equation}
\log \left( 1 + \frac{9r_0 }{\mathcal{C}_{r_0,T} \| \ovl{U}_1\|_{\C^{0,1}_{t,x}}}   \right) < \frac{T}{200}.
\label{eq:choiceofC}
\end{equation} We set
\begin{equation}
M_1:= \max\left\{ \udl{m}_1, 2 \alpha \right\} + \frac{T}{3} \| \ovl{U}_1\|_{\C^{0,1}_{t,x}}.
\label{eq:choiceofM}
\end{equation} With this choice of parameters, we apply Proposition \ref{proposition:lowvelocities} with $\tau= \frac{T}{3}$ and $M=M_1$, which gives $\ovl{U}_2$ and $\ovl{M}$. \par
This allows to set
\begin{equation}
\ovl{U}(t,x) := \left\{ \begin{array}{ll}
\ovl{U}_1(t,x), & (t,x) \in \left[0,\frac{T}{3}\right] \times \T^2, \\
\ovl{U}_2\left( t - \frac{T}{3},x \right), & (t,x) \in \left[ \frac{T}{3}, \frac{2T}{3} \right] \times \T^2, \\
\ovl{U}_1\left( t - \frac{2T}{3}, x \right), & (t,x) \in \left[ \frac{2T}{3}, T \right] \times \T^2.
\end{array} \right.
\label{eq:referencevectorfield}
\end{equation} By construction,
\begin{eqnarray}
&& \curl_x \ovl{U}(t,x) =0, \quad (t,x) \in [0,T] \times (\T^2 \setminus \omega), \label{eq:curlbar} \\
&& \supp \ovl{U} \subset (0,T) \times \omega, \label{eq:support} \\
&& \div_x \ovl{U}(t,x) = 0, \label{eq:incompressibility} \\
&& \int_{\T^2} \ovl{U}(t,x) \ud x = 0, \quad \forall t\in [0,T]. \label{eq:referencemoyennenulle}
\end{eqnarray} We set
\begin{equation}
\ovl{W}(t,x) := \curl_x \ovl{U}(t,x).
\label{eq:curl}
\end{equation} Let us consider the functions
\begin{equation*}
\mathcal{Z}_1(v) := -k_1 v_2e^{-\frac{|v|^2}{2}}, \quad \mathcal{Z}_2(v):= k_2 v_1 e^{-\frac{|v|^2}{2}}, \quad \quad \quad \forall v=(v_1,v_2) \in \R^2, 
\end{equation*} where $k_1,k_2 >0$ are normalisation constants. These functions satisfy that 
\begin{equation}
\mathcal{Z}_1, \mathcal{Z}_2 \in \S(\R^2),
\label{eq:referenceScwartz}
\end{equation} where $\S(\R^d)$ stands for the space of real-valued Schwartz functions in $\R^d$. Moreover, choosing $k_1, k_2$ adequately, we have
\begin{eqnarray}
&& \int_{\R^2} v_1 \mathcal{Z}_1(v) \ud v = \int_{\R^2} v_2 \mathcal{Z}_2(v) \ud v = 0, \label{eq:diagonalintegral} \\
&& \int_{\R^2} v_2 \mathcal{Z}_1(v) \ud v = - \int_{\R^2} v_1 \mathcal{Z}_2(v) \ud v = 1, \label{eq:antidiagonalintegral} \\
&& \int_{\R^2} \mathcal{Z}_1(v) \ud v = \int_{\R^2} \mathcal{Z}_2(v) \ud v = 0. \label{eq:Zintegralenulle} 
\end{eqnarray} We thus define, for any $(t,x,v) \in [0,T] \times \T^2 \times \R^2$,
\begin{equation}
\ovl{f}(t,x,v) := \mathcal{Z}_1(v) \d_{x_1} \ovl{W}(t,x) + \mathcal{Z}_2(v) \d_{x_2} \ovl{W}(t,x). 
\label{eq:referencefunction}
\end{equation} From (\ref{eq:support}) and (\ref{eq:curl}), we have
\begin{equation}
\ovl{f}|_{t=0} = 0, \quad \ovl{f}|_{t=T} = 0.
\label{eq:referenceinitialandfinal}
\end{equation} Thanks to (\ref{eq:Zintegralenulle}), 
\begin{equation}
\rho_{\ovl{f}}(t,x) = 0, \quad \forall (t,x) \in \Omega_T.
\label{eq:referencedensitenulle}
\end{equation}Furthermore, by construction and using (\ref{eq:diagonalintegral}) and (\ref{eq:antidiagonalintegral}) we find
\begin{equation*}
-\Delta_x \ovl{W} = \curl_x \int_{\R^2} v\ovl{f} \ud v .
\end{equation*} Moreover, thanks to (\ref{eq:referencefunction}), 
\begin{equation}
\int_{\T^2} j_{\ovl{f}}(t,x) \ud x = 0.
\label{eq:nulreferenceintergral}
\end{equation} Hence, for some $\ovl{p} \in \C^{\infty}([0,T] \times \T^2;\R)$, 
\begin{equation}
-\Delta_x \ovl{U}(t,x) = \nabla_x \ovl{p}(t,x) + \int_{\R^2} v\ovl{f} \ud v.  
\label{eq:referenceStokes}
\end{equation} Furthermore, from (\ref{eq:referencefunction}), (\ref{eq:curlbar}) and (\ref{eq:curl}), we deduce that 
\begin{eqnarray}
\d_t \ovl{f} + v\cdot \nabla_x \ovl{f} + \div_v \left[ (\ovl{U} - v )\ovl{f} \right] = 0, && \forall (t,x,v) \in [0,T] \times \left( \T^2\setminus \omega\right) \times \R^2, \nonumber \\
 \ovl{f}(t,x,v) = 0, && \forall (t,x,v) \in [0,T] \times (\T^2 \setminus \omega)  \times \R^2.  \label{eq:supportsolutionderefernce}
\end{eqnarray} To sum up, we have constructed a reference solution $(\ovl{f},\ovl{U},\ovl{p})$ of system (\ref{eq:VS}) with (\ref{eq:referenceinitialandfinal}) and such that the characteristics associated to $-v + \ovl{U}$ satisfy (\ref{eq:largevelocitycharacteristics}).


\section{Fixed point argument}

Let $\epsilon\in(0,\epsilon_0)$ be fixed, with $\epsilon_0$ to be chosen later on. We shall define an operator $\V_{\epsilon}$ acting on a domain $\S_{\epsilon} \subset \C^0([0,T]\times \T^2 \times \R^2)$ to be precised below. The goal of this section is to show that $\V_{\epsilon}$ has a fixed point.  

\subsection{Definition of the operator}
We describe the set $\S_{\epsilon}$. Let $\epsilon \in (0,\epsilon_0)$, to be precised later on, and $\gamma >2$. Then, set 
\begin{equation}
\delta_1:= \frac{\gamma}{2(\gamma+3)}, \quad \delta_2:= \frac{\gamma + 2}{\gamma+3}.
\label{eq:parametersdelta}
\end{equation} According to the notation of Section 1, we define
\begin{eqnarray}
&& \S_{\epsilon}:= \left\{ g \in \C^{0,\delta_2}_b (Q_T); \right. \nonumber \\
&& \quad \quad \quad  \textbf{(a)}\,\,  \left\| \int_{\R^2} (\ovl{f} - g) \ud v \right\|_{\C^{0,\delta_1}_b(\Omega_T)} \leq c_3 \epsilon, \nonumber \\
&& \quad \quad \quad  \textbf{(b)} \,\, \|(1+|v|)^{\gamma+2} (\ovl{f} - g) \|_{L^{\infty}(Q_T)} \nonumber \\
&& \quad \quad \quad  \quad \quad \quad  \quad \quad \leq c_1 \left( \|f_0\|_{\C^1(\T^2\times \R^2)} + \| (1+|v|)^{\gamma+2} f_0\|_{\C^0(\T^2 \times \R^2)} \right), \nonumber \\
&& \quad \quad \quad  \textbf{(c)} \,\,\, \|(\ovl{f} - g) \|_{\C^{0,\delta_2}_b(Q_T)} \nonumber \\
&& \quad \quad \quad  \quad \quad \quad  \quad \quad \leq c_2 \left( \|f_0\|_{\C^1(\T^2\times \R^2)} + \| (1+|v|)^{\gamma+2} f_0\|_{\C^0(\T^2 \times \R^2)} \right), \nonumber \\
&&  \quad \quad \quad \textbf{(d)} \, \int_{\T^2} \int_{\R^2} vg(t,x,v) \ud x \ud v = 0, \quad \forall t \in [0,T] \nonumber \\
&& \left. \quad \quad \quad \textbf{(e)} \, \int_{\T^2} \int_{\R^2} g(t,x,v) \ud x \ud v = \int_{\T^2} \int_{\R^2} f_0(x,v) \ud x \ud v, \quad \forall t \in [0,T]  \right\}, \nonumber 
\end{eqnarray} where $c_1,c_2,c_3$ are constants depending only on $T,\omega,\gamma, \delta_1$ and $\delta_2$ (see (\ref{eq:choiceofc1}), (\ref{eq:choiceofc2}) and (\ref{eq:choiceofc3}) for details).  We observe that, for $c_1,c_2,c_3$ large enough and $f_0\in \C^1(\T^2\times \R^2)$, with high moments in $v$, satisfying 
\begin{equation*}
\left\| \int_{\R^2} f_0(x,v) \ud v \right\|_{\C^{0,\delta_1}_b(\Omega_T)} \leq c_3\epsilon,
\end{equation*} we trivially have that $\ovl{f} + f_0 \in \S_{\epsilon}$. Thus, $\S_{\epsilon} \not = \emptyset$. \par
In order to describe the operator $\V_{\epsilon}$ we have to introduce some definitions. Let (see \cite[p. 342]{Glass})
\begin{eqnarray}
&& \gamma^{-}:= \left\{ (x,v) \in S(x_0,r_0) \times \R^2; \, |v|\geq \frac{1}{2}, \, \langle v, \nu(x) \rangle \leq -\frac{|v|}{10}   \right\}, \nonumber \\
&& \gamma^{2-}:= \left\{ (x,v) \in S(x_0,r_0) \times \R^2; \, |v|\geq 1, \, \langle v, \nu(x) \rangle \leq -\frac{|v|}{8}   \right\}, \nonumber \\
&& \gamma^{3-}:= \left\{ (x,v) \in S(x_0,r_0) \times \R^2; \, |v|\geq 2, \, \langle v, \nu(x) \rangle \leq -\frac{|v|}{5}   \right\}, \nonumber \\
&& \gamma^{+}:= \left\{ (x,v) \in S(x_0,r_0) \times \R^2; \, \langle v, \nu(x) \rangle \leq 0   \right\}, \nonumber 
\end{eqnarray} where $\nu(x)$ denotes the outward unit normal at $S(x_0,r_0)$ at $x$. It can be shown that 
\begin{equation}
\dist\left( [S(x_0,r_0) \times \R^2] \setminus \gamma^{2-}; \gamma^{3-}  \right) >0.
\label{eq:separationgamma}
\end{equation} Consequently, we may choose an absorption function $\A \in \C^{\infty} \cap W^{1,\infty} (S(x_0,r_0)\times \R^2;\R^+)$ such that 
\begin{eqnarray}
0 \leq \A(x,v)\leq 1, && \forall (x,v) \in S(x_0,r_0) \times \R^2, \label{eq:absortionfunction} \\
\A(x,v) = 1, && \forall (x,v) \in [S(x_0,r_0) \times \R^2 ]\setminus \gamma^{2-}, \nonumber  \\
\A(x,v) = 0, && \forall (x,v) \in \gamma^{3-}. \nonumber 
\end{eqnarray} We also choose a truncation function $\Y \in \C^{\infty}(\R^+;\R^+)$ such that 
\begin{eqnarray}
\Y(t) = 0, && \forall t\in \left[0,\frac{T}{48}\right] \cup \left[\frac{47T}{48}, T \right],\nonumber \\
\Y(t) = 1, && \forall t\in \left[\frac{T}{24},\frac{23T}{24}\right].\nonumber 
\end{eqnarray} We describe the operator $\V_{\epsilon}$ in three steps. 
\bigskip


\textbf{1. Stokes system.} Let $g\in \S_{\epsilon}$. We associate to $g$ the pair $(U^g(t),p^g(t))$, for every $t\in [0,T]$, solution of
\begin{equation}
\left\{ \begin{array}{ll}
-\Delta_x U^g(t) + \nabla_x p^g(t) = j_g(t)  , & x \in \T^2, \\
\div_x U^g(t,x) = 0, & x \in \T^2, \\
\int_{\T^2} U^g(t,x) \ud x = 0, & t\in [0,T],
\end{array} \right.
\label{eq:Stokesforg}
\end{equation} where
\begin{equation*}
j_g(t,x):= \int_{\R^2} v g(t,x,v) \ud v. 
\end{equation*} We shall prove that this association is well defined, thanks to point (d) and the following result. 
\begin{lemma}
Let $\epsilon >0$. Then, there exists a constant $K_1=K_1(\gamma)>0$ such that, for every $g\in \S_{\epsilon}$ and every $t\in[0,T]$,
\begin{equation*}
\| j_{g}(t) \|_{L^2(\T^2)^2} \leq K_1 \sqrt{ 1 + c_1^2\epsilon^2}.
\end{equation*} 
\label{lemma:jgisL2}
\end{lemma}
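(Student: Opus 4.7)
The plan is to split the current density as $j_g = j_{\ovl f} + j_{g - \ovl f}$ and estimate each piece in $L^\infty_{t,x}$ (which controls $L^2(\T^2)$ since the torus has unit measure). The reference piece is handled by the explicit construction in Section 3, and the perturbation piece is handled by the weighted estimate (b) in the definition of $\S_\epsilon$ combined with the smallness hypothesis (\ref{eq:smalldata}) on $f_0$.

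For the reference part, I would use the explicit form (\ref{eq:referencefunction}) of $\ovl f$ together with the moment identities (\ref{eq:diagonalintegral})--(\ref{eq:Zintegralenulle}) to compute
\begin{equation*}
j_{\ovl f}(t,x) = \int_{\R^2} v\, \ovl f(t,x,v) \ud v = \nabla_x^\perp \ovl W(t,x).
\end{equation*}
Since $\ovl U \in \Cinf([0,T]\times\T^2;\R^2)$ is smooth with support in $(0,T)\times\omega$, its curl $\ovl W$ is smooth on $\Omega_T$, so $\| j_{\ovl f} \|_{L^\infty(\Omega_T)} \leq C(T,\omega,\ovl U)$, and in particular $\| j_{\ovl f}(t) \|_{L^2(\T^2)^2} \leq C$ uniformly in $t$.

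For the perturbation, property \textbf{(b)} of $\S_\eps$ combined with the smallness assumption (\ref{eq:smalldata}) yields the pointwise bound $|\ovl f - g|(t,x,v) \leq c_1 \epsilon\, (1+|v|)^{-\gamma-2}$, so
\begin{equation*}
|j_{\ovl f - g}(t,x)| \leq c_1 \epsilon \int_{\R^2} \frac{|v|}{(1+|v|)^{\gamma+2}} \ud v = C(\gamma)\, c_1 \epsilon,
\end{equation*}
where the integral converges because the integrand is $O(|v|^{-\gamma-1})$ at infinity and $\gamma > 2 > 1$ ensures integrability in two dimensions. Again this gives $\| j_{\ovl f - g}(t)\|_{L^2(\T^2)^2} \leq C(\gamma)\, c_1 \epsilon$ since $\T^2$ has unit measure.

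Combining both bounds through the triangle inequality and the elementary inequality $A+B \leq \sqrt{2}\sqrt{A^2+B^2}$, we obtain
\begin{equation*}
\| j_g(t) \|_{L^2(\T^2)^2} \leq C + C(\gamma)\, c_1 \epsilon \leq K_1 \sqrt{1 + c_1^2 \epsilon^2}
\end{equation*}
for a suitable $K_1$ absorbing the constants $C,C(\gamma)$. There is no serious obstacle here; the only point that requires attention is verifying the integrability condition $\gamma+1 > 2$ in the perturbation estimate, which is precisely why the hypothesis $\gamma > 2$ was imposed in Theorem \ref{thm:controllability}.
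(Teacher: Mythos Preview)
Your proof is correct and follows essentially the same approach as the paper: split $j_g$ into the reference piece $j_{\ovl f}$ and the perturbation $j_{g-\ovl f}$, bound the latter pointwise via property \textbf{(b)} combined with the smallness assumption (\ref{eq:smalldata}), and combine through the triangle inequality. The only cosmetic difference is that you compute $j_{\ovl f} = \nabla_x^\perp \ovl W$ explicitly from the moment identities, whereas the paper simply invokes (\ref{eq:referenceScwartz}) to say the integral is finite; and you work in $L^\infty_{t,x}$ before passing to $L^2(\T^2)$, while the paper squares first and works directly in $L^2$.

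One minor remark: the integrability of $|v|(1+|v|)^{-\gamma-2}$ over $\R^2$ only requires $\gamma>1$, not $\gamma>2$; the stronger hypothesis $\gamma>2$ is needed elsewhere in the paper (e.g.\ in the H\"older interpolation in Section~4.2.3), not for this lemma.
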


\begin{proof}
We write, by the triangular inequality,
\begin{align}
\|j_g(t)\|_{L^2(\T^2)^2}^2 & = \int_{\T^2} \left| \int_{\R^2} v g(t,x,v) \ud v \right|^2 \ud x \nonumber \\
& = \int_{\T^2} \left| \int_{\R^2} v \left( g - \ovl{f} + \ovl{f} \right)(t,x,v) \ud v \right|^2 \ud x \nonumber \\
& \leq \int_{\T^2} \left( \int_{\R^2} |v||g-\ovl{f}| \ud v + \int_{\R^2} |v||\ovl{f}| \ud v \right)^2 \ud x \nonumber \\
& \leq 2 \int_{\T^2} \left( \int_{\R^2} |v||g-\ovl{f}| \ud v  \right)^2 \ud x + 2 \int_{\T^2} \left( \int_{\R^2} |v||\ovl{f}| \ud v  \right)^2 \ud x.  \label{eq:lemma411}
\end{align} Let us note that, from (\ref{eq:referencefunction}), (\ref{eq:referenceScwartz}) and the properties of Schwartz functions, we have that
\begin{equation}
\mathcal{I}_1:= \int_{\T^2} \left( \int_{\R^2} |v||\ovl{f}| \ud v  \right)^2 \ud x < \infty,
\label{eq:lemma413}
\end{equation} is a positive constant, independent from $g$. \par 
We have to treat the first part of (\ref{eq:lemma411}). Indeed, by point (b),
\begin{align}
& \int_{\T^2} \left(  \int_{\R^2} |v||(g-\ovl{f})(t,x,v)| \ud v\right)^2 \ud x \nonumber \\
& \quad \quad  \leq  c_1^2 \left( \int_{\R^2} \frac{|v| \ud v}{(1+|v|)^{\gamma + 2}} \right)^2 (\|f_0\|_{\C^1} + \| (1+|v|)^{\gamma+2}f_0 \|_{L^{\infty}})^2, \nonumber \\
& \quad \quad \leq  \mathcal{I}_2 c_1^2 \epsilon^2, \label{eq:lemma412}
\end{align} where we have used (\ref{eq:smalldata}) and
\begin{equation*}
\mathcal{I}_2:= \left( \int_{\R^2} \frac{ |v| \ud v}{(1+|v|)^{\gamma +2}} \right)^2 < \infty.  
\end{equation*} Finally, putting together (\ref{eq:lemma412}), (\ref{eq:lemma413}) and (\ref{eq:lemma411}), we obtain the result by choosing
\begin{equation*}
K_1: = \sqrt{ 2 \max\left\{ \mathcal{I}_1, \mathcal{I}_2 \right\}}.
\end{equation*}
\end{proof}

We observe that the previous lemma shows that $j_g(t) \in L^2(\T^2)^2$, for every $t\in [0,T]$. Then, using point (d), we get from Proposition \ref{proposition:Stokesclassical} (see Appendix B for notation) the solution of (\ref{eq:Stokesforg}),
\begin{equation*}
(U^g(t),p^g(t)) \in (H^2_0(\T^2)^2 \cap \mathbb{V}) \times L^2(\T^2), \quad \forall t \in [0,T].
\end{equation*} Consequently, the association given by (\ref{eq:Stokesforg}) is well defined. \par 
Let us show some consequences that will be important in next sections. We will assume from now on that the choice of $\epsilon$ is made according to
\begin{equation}
\epsilon_0\leq \min \left\{ \frac{1}{c_1}, 1 \right\}.
\label{eq:choiceofepsilon0} 
\end{equation}  
\begin{lemma}
There exists a constant $K_2=K_2(T,\gamma)>0$ such that for any $g\in \S_{\epsilon}$ and $U^g$ given by (\ref{eq:Stokesforg}), we have 
\begin{equation}
\|U^g\|_{L^{\infty}(\Omega_T)} \leq K_2(T,\gamma).
\label{eq:majorationuniformechampdevitesse}
\end{equation}  Moreover,
\begin{equation} 
U^g \in \C^0([0,T];\C^1(\T^2;\R^2)).
\label{eq:regularvectorfield}
\end{equation}  
\label{lemma:champsuniformes}
\end{lemma}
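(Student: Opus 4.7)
The plan is to transform the weighted-$L^\infty$ control of $g-\ovl f$ coming from point (b) into an $L^p$ bound on the source of the Stokes system (\ref{eq:Stokesforg}), and then to invoke the classical elliptic theory collected in Appendix B.

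First, I would show that $j_g$ is uniformly bounded on $\Omega_T$. Split $j_g = j_{\ovl f} + j_{g-\ovl f}$. Since $\ovl f$ is built in (\ref{eq:referencefunction}) from the Schwartz velocity profiles $\mathcal Z_1,\mathcal Z_2$ against the smooth factor $\nabla_x\ovl W$, the first piece is bounded by a constant depending only on $T$ and on the reference construction. For the second piece, point (b) combined with (\ref{eq:smalldata}) and the constraint (\ref{eq:choiceofepsilon0}) yields the pointwise estimate
\begin{equation*}
|(g-\ovl f)(t,x,v)| \le \frac{c_1\epsilon}{(1+|v|)^{\gamma+2}};
\end{equation*}
since $\gamma>2$ the first velocity moment $\int_{\R^2}|v|(1+|v|)^{-\gamma-2}\ud v$ is finite, producing $\|j_{g-\ovl f}\|_{L^\infty(\Omega_T)} \le K(\gamma)$. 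Together this gives $\|j_g\|_{L^\infty(\Omega_T)}\le K(T,\gamma)$ independently of $g\in\S_\epsilon$.

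Next, at every frozen time $t\in[0,T]$, I would apply the stationary Stokes theory of Appendix B to (\ref{eq:Stokesforg}). Point (d) ensures the compatibility condition $\int_{\T^2} j_g(t,x)\ud x = 0$, so for any $p\in(2,\infty)$ the elliptic estimate gives
\begin{equation*}
\|U^g(t)\|_{W^{2,p}(\T^2)} \le C_p\,\|j_g(t,\cdot)\|_{L^p(\T^2)} \le C_p\,\|j_g\|_{L^\infty(\Omega_T)}.
\end{equation*}
Because we work in dimension two, the Sobolev embedding $W^{2,p}(\T^2)\hookrightarrow \C^{1,1-2/p}(\T^2)$ is valid as soon as $p>2$, which simultaneously produces the uniform estimate (\ref{eq:majorationuniformechampdevitesse}) and the membership $U^g(t)\in\C^1(\T^2;\R^2)$ with norm uniform in $t$.

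Finally, for the time continuity asserted in (\ref{eq:regularvectorfield}), I would exploit that $g\in \C^{0,\delta_2}_b(Q_T)$ by point (c) and that $\ovl f$ is smooth, so the integrand $v\,g(t,x,v)$ is continuous in $(t,x)$ pointwise in $v$; combined with the weighted pointwise bound established above (which furnishes a fixed integrable majorant), dominated convergence gives $t\mapsto j_g(t,\cdot)\in \C^0([0,T];L^p(\T^2))$ for every finite $p$. Continuous dependence of the Stokes solution on its right-hand side in $W^{2,p}$, and hence in $\C^1$ by Sobolev embedding, then yields $U^g\in\C^0([0,T];\C^1(\T^2;\R^2))$. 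I do not expect a serious obstacle: the only delicate balance is that between the decay exponent $\gamma+2$ in (b) and the hypothesis $\gamma>2$, which is exactly what makes the first velocity moment of $j_g$ finite and allows a clean application of the Stokes estimates.
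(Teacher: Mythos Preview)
Your proposal is correct, and the overall architecture is close to the paper's, but you streamline two places where the paper takes a slightly longer road.

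For the uniform bound (\ref{eq:majorationuniformechampdevitesse}), the paper first passes through the $L^2$ framework: Lemma \ref{lemma:jgisL2} gives $\|j_g(t)\|_{L^2}\leq K_1$, then Proposition \ref{proposition:Stokesclassical} yields $U^g(t)\in H^2$, and finally the Sobolev embedding $H^2(\T^2)\hookrightarrow L^\infty$ produces $K_2$. Only afterwards does the paper upgrade to $L^p$ and $W^{2,p}\hookrightarrow\C^1$ to get the spatial $\C^1$ regularity. You instead establish $j_g\in L^\infty(\Omega_T)$ directly from point~(b) and the decay of $\ovl f$, and then a single application of Proposition \ref{proposition:StokesLp} together with $W^{2,p}(\T^2)\hookrightarrow\C^{1,1-2/p}$ for $p>2$ delivers both (\ref{eq:majorationuniformechampdevitesse}) and the spatial part of (\ref{eq:regularvectorfield}) at once. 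This is a cleaner packaging of the same ingredients.

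For the time continuity, the difference is more visible. The paper proves a \emph{quantitative} estimate: it interpolates the $\C^{0,\delta_2}$ regularity from point~(c) against the weighted $L^\infty$ bound from point~(b) to obtain $\|j_g(t)-j_g(s)\|_{L^p}\leq C|t-s|^{1/(p(\gamma+3))}$, and then transfers this H\"older rate to $U^g$ via the Stokes estimate. Your dominated-convergence argument is more elementary and purely qualitative: the uniform-in-$(t,x)$ majorant $|v|\big(\sup_{t,x}|\ovl f|+c_1\epsilon(1+|v|)^{-\gamma-2}\big)\in L^1_v$ makes $(t,x)\mapsto j_g(t,x)$ continuous on the compact set $\Omega_T$, hence $t\mapsto j_g(t,\cdot)$ is continuous into $L^p$, and the Stokes estimate plus Sobolev finishes. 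Both arguments are valid for what the lemma actually asserts; the paper's version additionally yields a H\"older-in-time modulus, which is not needed here but is in the spirit of the H\"older machinery used elsewhere in Section~4.
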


\begin{proof}
Let $g \in \S_{\epsilon}$. Then, by (\ref{eq:regularityStokes}), the Sobolev embedding theorem and (\ref{eq:choiceofepsilon0}), 
\begin{eqnarray}
\|U^g\|_{L^{\infty}(\Omega_T)} &\leq & C_{S} \|U^g\|_{L^{\infty}_t (H^2_x)^2} \nonumber \\
& \leq & C_{S}C_1\|j_g\|_{L^{\infty}_t (L^2_x)^2} \leq C_SC_1 \sqrt{2} K_1,
\label{eq:vectorfielduniformly}
\end{eqnarray} which gives (\ref{eq:majorationuniformechampdevitesse}) for a constant $K_2:= \sqrt{2} C_SC_1 K_1>0$, independent from $g$. \par
Let us show (\ref{eq:regularvectorfield}). Indeed, by similar arguments as those in Lemma \ref{lemma:jgisL2}, we deduce that $j_g(t) \in L^{\infty}(\T^2)^2$, for every $t\in [0,T]$. Then, by interpolation between the $L^p$ spaces, we deduce that 
\begin{equation*}
j_g(t) \in L^p(\T^2)^2, \quad \forall t\in [0,T], \,  p\in[2,\infty]. 
\end{equation*} Consequently, using Proposition \ref{proposition:StokesLp} with source term $j_g(t) \in L^p(\T^2)^2$, we deduce that 
\begin{equation*}
U^g(t) \in W^{2,p}(\T^2)^2, \quad  \forall t\in [0,T], \, p\in [2,\infty). 
\end{equation*} Finally, choosing $2<p<\infty$, the Sobolev embedding theorem in this case (see \cite[Corollaire 9.1, p.52]{Lions}) implies that 
\begin{equation*}
U^g(t) \in \C^1(\T^2)^2, \quad  \forall t\in [0,T].  
\end{equation*} We have to show the continuity w.r.t. the time variable. Let $t,s \in [0,T]$. Then, $W(t,s):=U^g(t) - U^g(s)$, $p(t,s):=p^g(t) - p^g(s)$, satisfy
\begin{equation*}
\left\{ \begin{array}{ll} 
-\Delta_x W(t,s) + \nabla_x p(t,s) = j_g(t) - j_g(s), & x\in \T^2, \\
\div_x W(t,s) = 0, & x\in \T^2, \\
\int_{\T^2} W(t,s) \ud x =0. 
\end{array} \right.
\end{equation*} On the one hand, we have
\begin{align}
&\|j_g(t) - j_g(s)\|_{L^p(\T^2)^2}^p \label{eq:jst} \\
&\quad \leq C(p)\sup_{x\in \T^2} \left| \int_{\R^2} v(g(t,x,v) - g(s,x,v))\ud v \right| \nonumber \\
& \quad \leq C(p) \sup_{x\in \T^2} \|g(t,x) - g(s,x)\|_{L^{\infty}_v}^{1-\frac{\gamma +1}{\gamma +2}} \left( \int_{\R^2} |v||g(t,x,v) - g(s,x,v)|^{\frac{\gamma + 1}{\gamma + 2}} \ud v\right). \nonumber
\end{align} 
Furthermore, using point (b), we have, for any $x\in \T^2$,
\begin{align*}
&\int_{\R^2} |v||g(t,x,v) - g(s,x,v)|^{\frac{\gamma + 1}{\gamma + 2}} \ud v \\
& \quad \leq \int_{\R^2} |v|  |g(t,x,v) - \ovl{f}(t,x,v)|^{\frac{\gamma + 1}{\gamma + 2}} \ud v + \int_{\R^2} |v| |\ovl{f} (t,x,v)-\ovl{f}(s,x,v)|^{\frac{\gamma + 1}{\gamma + 2}} \ud v \\
& \quad \quad \quad \quad \quad \quad \quad \quad + \int_{\R^2} |v| | \ovl{f}(s,x,v) - g(s,x,v)|^{\frac{\gamma + 1}{\gamma + 2}}  \ud v \\
& \leq  2 \left[ c_1^{\frac{\gamma + 1}{\gamma + 2}} \left( \|f_0\|_{\C^1(\T^2\times \R^2)} +  \|(1+|v|)^{\gamma + 2}f_0 \|_{L^{\infty}(\T^2\times \R^2)}\right)^{\frac{\gamma + 1}{\gamma + 2}} \right] \int_{\R^2} \frac{|v| \ud v }{(1+|v|)^{\gamma + 1}}\nonumber \\
&  \quad \quad \quad \quad \quad \quad  \quad \quad + C(\ovl{f},\gamma)  \\
& \leq C(c_1,f_0,\gamma, \ovl{f}),
\end{align*} thanks to (\ref{eq:referencefunction}) and (\ref{eq:referenceScwartz}). This yields, from (\ref{eq:jst}) and using the fact that $g\in \C^{0,\delta_2}(Q_T)$,  
\begin{align*}
&\|j_g(t) - j_g(s)\|^p_{L^p(\T^2)^2} \\
& \quad \quad \leq C(p,c_1,f_0,\gamma, \ovl{f}) \sup_{x\in \T^2} \|g(t,x) - g(s,x)\|_{L^{\infty}_v}^{1-\frac{\gamma +1}{\gamma +2}} \\
& \quad \quad \leq C(p,c_1,f_0,\gamma, \ovl{f}) \|g\|_{\C^{0,\delta_2}(Q_T)}^{1 - \frac{\gamma + 1}{\gamma +2}}|t-s|^{\frac{1}{\gamma + 3}}.
\end{align*} Then, combining this with (\ref{eq:regularityStokes}), we deduce
\begin{equation*}
 \| W(t,s) \|_{W^{2,p}(\T^2)^2} \leq C |t-s|^{\frac{1}{p(\gamma + 3)}}, \quad \forall t,s \in [0,T].
\end{equation*} This implies, again by the Sobolev embedding,
\begin{equation*}
\lim_{|t-s| \rightarrow 0} \| U^g(t) - U^g(s) \|_{\C^1(\T^2)^2} = 0,
\end{equation*} which gives (\ref{eq:regularvectorfield}).

\end{proof}

We also deduce the following property for the backwards characteristics associated to $-v + U^g$.

\begin{lemma}
Let $g \in \S_{\epsilon}$ and let $(X^g,V^g)$ be the characteristics associated to  the field $-v + U^g$, according to (\ref{eq:Stokesforg}). Then, there exists a constant $K_3=K_3(T,\gamma)>0$, independent of $g$, such that
\begin{equation}
\left|e^t|v| - |V^g(0,t,x,v)| \right| \leq  K_3,
\label{eq:pointb}
\end{equation} for any $(t,x,v) \in [0,T] \times \T^2 \times \R^2$. 
\label{lemma:b}
\end{lemma}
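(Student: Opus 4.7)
The plan is to exploit the explicit integral representation of the characteristics in (\ref{eq:characteristics}) and then invoke the uniform bound on $U^g$ furnished by Lemma \ref{lemma:champsuniformes}.

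First I would instantiate formula (\ref{eq:characteristics}) at the pair $(s',s)=(0,t)$, with the role of the ``initial time'' in that formula played by $t$ and the evaluation point by $0$. This gives the identity
\begin{equation*}
V^g(0,t,x,v) = e^{t}v - \int_0^t e^{\tau} U^g\bigl(\tau, X^g(\tau,t,x,v)\bigr)\,\mathrm{d}\tau,
\end{equation*}
which isolates the deterministic part $e^t v$ coming from the friction, and leaves an error term that depends only on $U^g$ along the backward trajectory.

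Next I would bound the integral term uniformly in $g$. Since $\tau \in [0,T]$, we have $e^\tau \leq e^T$, and by Lemma \ref{lemma:champsuniformes} we know that $\|U^g\|_{L^\infty(\Omega_T)} \leq K_2(T,\gamma)$ for every $g \in \S_\epsilon$. Therefore
\begin{equation*}
\bigl|V^g(0,t,x,v) - e^t v\bigr| \leq T\, e^T\, K_2(T,\gamma),
\end{equation*}
uniformly in $(t,x,v) \in [0,T]\times\T^2\times\R^2$ and uniformly in $g\in\S_\epsilon$. Finally, the reverse triangle inequality $\bigl| e^t|v| - |V^g(0,t,x,v)| \bigr| \leq |e^t v - V^g(0,t,x,v)|$ yields the conclusion with $K_3 := T e^T K_2(T,\gamma)$.

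There is essentially no obstacle here: the result is a direct algebraic consequence of the characteristic formula combined with the previously established uniform $L^\infty$ estimate on $U^g$. The only point worth double-checking is that the sign conventions and the roles of the time variables in (\ref{eq:characteristics}) are consistent with evaluating backwards characteristics (i.e.\@\xspace starting at time $t$ from $(x,v)$ and flowing to time $0$), so that the factor $e^{-t+s}$ becomes $e^{t}$ as required; this is just bookkeeping with the existence/uniqueness statement already recorded after (\ref{eq:characteristcsystem}).
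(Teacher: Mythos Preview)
Your proof is correct and is essentially the same as the paper's: both instantiate the explicit formula (\ref{eq:characteristics}) for $V^g(0,t,x,v)$, bound the integral term by $C(T)\|U^g\|_{L^\infty_{t,x}}$, and invoke Lemma~\ref{lemma:champsuniformes} together with the reverse triangle inequality. Your explicit constant $K_3 = T e^T K_2(T,\gamma)$ is simply the paper's $C(T)K_2(T,\gamma)$ made concrete.
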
 

\begin{proof}
By (\ref{eq:characteristics}), we have
\begin{eqnarray}
\left|e^t|v| - |V^g(0,t,x,v)| \right| &\leq& \left| V^g(0,t,x,v) - e^tv   \right| \nonumber \\
&=& \left| \int_0^t e^s U^g(s, X^g(0,s,x,v)) \ud s \right| \nonumber \\
& \leq &  C(T) \| U^g \|_{L^{\infty}_{t,x}} \nonumber \\
& \leq &  C(T) K_2(T,\gamma), \nonumber
\end{eqnarray} using (\ref{eq:majorationuniformechampdevitesse}). This allows to conclude, choosing $K_3:=C(T) K_2(T,\gamma)$.
\end{proof}

\bigskip


\textbf{2. Absorption.} To give a sense to the procedure of absorption we need first the following result, which asserts that the number of times the characteristics associated to the Stokes velocity field of the previous part meet $\gamma-$ is finite.

\begin{lemma}
Let $g\in \S_{\epsilon}$ and let $U^g$ be given by (\ref{eq:Stokesforg}) accordingly. Let $(X^g,V^g)$ be the characteristics associated to the field $-v + U^g$. Then, for any $(x,v) \in \T^2 \times \R^2$, there exists $n(x,v) \in \N$ such that there exist $0 <t_1< \dots < t_{n(x,v)} <T$ such that
\begin{eqnarray}
&& \,\,\,  \left\{ (X^g,V^g)(t,0,x,v); \, t\in[0,T]   \right\} \cap \gamma^- = \left\{ t_i \right\}_{i=1}^{n(x,v)}, \label{eq:finitetimesofimpact}  \\
&& \, \,\, \exists s >0 \textrm{ s.t. } ( t_i - s, t_i +s) \cap (t_j - s, t_j + s) = \emptyset, \, \forall i,j =1, \dots, n(x,v), \label{eq:timesareisolated}
\end{eqnarray} with the convention that $n(x,v) = 0 $ and $\left\{ t_i \right\}_{i=1}^{n(x,v)} = \emptyset$ if $\left\{ (X^g,V^g)  \right\} \cap \gamma^- = \emptyset$.
\label{lemma:finitenumberofimpacts}
\end{lemma}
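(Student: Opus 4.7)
The plan is to exploit the quantitative transversality built into the definition of $\gamma^-$: whenever the characteristic hits $\gamma^-$, its position lies on $S(x_0,r_0)$ and its velocity has a uniformly inward-pointing normal component. I introduce the scalar function $\phi(t) := |X^g(t,0,x,v) - x_0|^2 - r_0^2$. Since $U^g \in \C^0([0,T]; \C^1(\T^2;\R^2))$ by Lemma \ref{lemma:champsuniformes}, Cauchy-Lipschitz ensures that $t \mapsto (X^g, V^g)(t,0,x,v)$ is $\C^1$ on $[0,T]$, so $\phi \in \C^1([0,T])$ with $\phi'(t) = 2\langle X^g(t,0,x,v) - x_0,\, V^g(t,0,x,v)\rangle$. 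At any impact time $t^*$, the relation $X^g(t^*) - x_0 = r_0\, \nu(X^g(t^*))$ together with the defining inequalities of $\gamma^-$ yields the uniform bound $\phi'(t^*) \leq -r_0 |V^g(t^*)|/5 \leq -r_0/10 < 0$.

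To prove finiteness, I argue by contradiction. Suppose there were infinitely many impact times $(\tau_n)_{n \in \N} \subset [0,T]$. By compactness a subsequence converges to some $t^* \in [0,T]$, and since $\gamma^-$ is closed in $S(x_0,r_0) \times \R^2$ (being cut out by non-strict inequalities), the continuity of the characteristic flow in $t$ gives $(X^g(t^*), V^g(t^*)) \in \gamma^-$. By the previous paragraph, $\phi'(t^*) \leq -r_0/10 < 0$. Continuity of $\phi'$ then provides an open interval $I \ni t^*$ on which $\phi' < -r_0/20$, so $\phi$ is strictly monotone on $I$ and therefore has at most one zero. This contradicts the existence of infinitely many $\tau_{n_k} \in I$ with $\phi(\tau_{n_k}) = 0$, hence $n(x,v)$ is finite.

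Enumerating the impact times in increasing order as $0 < t_1 < \cdots < t_{n(x,v)} < T$ (with the empty convention when $n(x,v) = 0$, and absorbing endpoint impacts into the open interval by the same transversality argument applied on a neighborhood of $0$ or $T$), one obtains (\ref{eq:finitetimesofimpact}). Property (\ref{eq:timesareisolated}) then follows at once: if $n(x,v) \leq 1$ any positive $s$ works, while for $n(x,v) \geq 2$ one takes $s := \tfrac{1}{3}\min_{i \neq j} |t_i - t_j| > 0$. The only substantive step is preventing accumulation of impact times, which is entirely controlled by the transversality lower bound $\phi'(t^*) \leq -r_0/10$ valid at every impact time; everything else is routine.
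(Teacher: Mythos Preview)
Your argument is correct and is precisely the standard transversality argument underlying this lemma. The paper itself does not spell out a proof here: it simply refers to \cite[p.~348]{Glass} and \cite[p.~5468]{GDHK1}, noting that the friction case goes through unchanged once one has the regularity of $U^g$ from Lemma~\ref{lemma:champsuniformes} (so that the characteristics are $\C^1$ in $t$) and the uniform velocity bound from Lemma~\ref{lemma:b}. Your proof via the auxiliary function $\phi(t)=|X^g(t)-x_0|^2-r_0^2$ and the observation that $\phi'(t^*)\le -r_0/10$ at every impact time is exactly the mechanism behind those references: the non-tangency condition $\langle v,\nu(x)\rangle\le -|v|/10$ and the lower bound $|v|\ge 1/2$ in the definition of $\gamma^-$ force strict monotonicity of $\phi$ near each crossing, ruling out accumulation. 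One cosmetic point: $\phi$ is only unambiguously defined when $X^g$ lies in the chart around $x_0$ (since we are on $\T^2$), but as your argument is local near impact times and $r_0$ is small, this causes no trouble.
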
 For more details on this result, see \cite[p.348]{Glass} and \cite[p.5468]{GDHK1}. In the friction case, this holds true without further modification,  thanks to Lemma \ref{lemma:champsuniformes} and Lemma \ref{lemma:b}. \par

The previous lemma allows to define the following quantities. Let $f_0 \in \C^1(\T^2 \times \R^2)$ and let $(x,v) \in \T^2\times \R^2$. Then, for every $t_i, $ with $ i = 1,\cdots,n(x,v)$, we have $(\tilde{x},\tilde{v})=(X^g,V^g)(t_i,0,x,v) \in \gamma^-$. Moreover, let 
\begin{eqnarray}
&& f(t^-,\tilde{x},\tilde{v}) = \lim_{t\rightarrow t_i^-} f_0((X^g,V^g)(0,t,x,v)), \label{eq:fminus} \\
&& f(t^+,\tilde{x},\tilde{v}) = \lim_{t\rightarrow t_i^+} f_0((X^g,V^g)(0,t,x,v)). \label{eq:fplus}
\end{eqnarray}

We define $f:=\tilde{\V}_{\epsilon}[g]$ to be the solution of 
\begin{equation}
\left\{ \begin{array}{ll}
\d_t f + v\cdot \nabla_x f + U^g \cdot \nabla_v f - \div_v(vf) = 0, & (t,x,v) \in [0,T]\times [\T^2 \times \R^2]\setminus \gamma^{2-} \\
f(0,x,v) = f_0(x,v), &  (x,v) \in \T^2 \times \R^2, \\
f(t^+,x,v) = (1 - \Y(t))f(t^-,x,v) + \Y(t)\A(x,v) f(t^-,x,v), & (t,x,v) \in [0,T] \times \gamma^-. 
\end{array} \right.
\label{eq:absortion}
\end{equation} Let us explain how the absorption procedure works. From (\ref{eq:regularvectorfield}), the characteristics associated to the field $-v + U^g$ are regular. Thus, outside $\omega$, the system above defines a function $\tilde{\V}_{\epsilon}[g]$ of class $\C^1$. Moreover, the exact value of $\tilde{\V}_{\epsilon}[g]$ is given by these characteristics through (\ref{eq:explicitsolution}) and (\ref{eq:characteristics}). When the characteristics $(X^g, V^g)$ meet $\gamma^- $ at time $t$, $f(t^+,\cdot,\cdot)$ is fixed according to the last equation in (\ref{eq:absortion}). We can see the function $\Y (t) \A(x,v)$ as an opacity factor depending on time and on the incidence of the characteristics on $S(x_0,r_0)$. Indeed, $f(t^+,\cdot,\cdot)$ can take values varying from $f(t^-,\cdot,\cdot)$, in the case of no absorption, to $0$, according to the angle of incidence, the modulus of the velocity and time.
\par 

\bigskip


\textbf{3. Extension.} The function $\tilde{\V}_{\epsilon}[g]$ is not necessarily continuous around $[0,T] \times \gamma^{-} \subset [0,T] \times B(x_0, 2r_0)$. To avoid this problem we shall use some extension operators preserving regularity. \par 
Let us first consider a linear extension operator 
\begin{equation*} 
\ovl{\pi}:\C^0_b(\T^2\setminus B(x_0, 2r_0)) \rightarrow \C^0_b(\T^2),
\end{equation*} such that for any $\sigma \in (0,1)$, a $\C^{0,\sigma}_b$ function is mapped onto a $\C^{0,\sigma}_b$ function. This allows to define another linear extension operator by
\begin{equation*}
\begin{array}{cccc}
\tilde{\pi}: & \C^0_b([0,T] \times [\T^2 \setminus B(x_0,2r_0) ] \times \R^2 ) & \rightarrow & \C^0_b([0,T]\times \T^2 \times \R^2) \\
             & f & \mapsto & \tilde{\pi} f(t,x,v) = \ovl{\pi}\left[ f(t,\cdot, v)\right](x).
\end{array}
\end{equation*} We modify $\tilde{\pi}$ in the following way. Let $\mu_1,\mu_2 \in \C^{\infty}(\T^2 \times \R^2)$ such that
\begin{align*}
\int_{\T^2} \int_{\R^2} \mu_1 \ud x \ud v = 0,  & \quad \int_{\T^2} \int_{\R^2} v\mu_1 \ud x \ud v = 1, \\
\int_{\T^2} \int_{\R^2} \mu_2 \ud x \ud v = 1,  & \quad \int_{\T^2} \int_{\R^2} v\mu_2 \ud x \ud v = 0. 
\end{align*} Then, set
\begin{equation*}
\pi(f) := \tilde{\pi}(f) -\int_{\T^2} \int_{\R^2} v \tilde{\pi}(f)\ud x \ud v \mu_1 + \left[ \int_{\T^2}\int_{\R^2} \left( f_0 - \tilde{\pi}(f) \right) \ud x \ud v \right]\mu_2.
\end{equation*} Thus, $\pi$ is an affine extension satisfying the following properties: for every $f \in \C^0_b([0,T] \times (\T^2 \setminus B(x_0,2r_0)) \times \R^2))$, we have 
\begin{align}
& \int_{\T^2} \int_{\R^2} \pi{f}(t,x,v) \ud x \ud v = \int_{\T^2} \int_{\R^2} f_0(x,v) \ud x \ud v, \quad \forall t\in [0,T], \label{eq:totalintegralconservedbypi} \\
& \int_{\T^2} \int_{\R^2} v\pi{f}(t,x,v) \ud x \ud v = 0, \quad \forall t\in [0,T], \label{eq:momentdevitessebypi} \\
&  \exists C_{\pi}>0 \textrm{ such that } \label{eq:extensioninfty} \\ 
&\|(1+|v|)^{\gamma+2} \pi(f) \|_{L^{\infty}(Q_T)} \leq C_{\pi}\|(1+|v|)^{\gamma +2} f \|_{L^{\infty}([0,T]\times (\T^2\setminus \omega) \times \R^2)}, \nonumber  \\
& \forall \sigma \in (0,1), \, \exists C_{\pi,\sigma}>0 \textrm{ such that } \label{eq:extensionHolder} \\ 
&  \|\pi(f) \|_{\C_b^{0,\sigma}(Q_T)} \leq C_{\pi,\sigma}\|(1+|v|)^{\gamma +2} f \|_{\C_b^{0,\sigma}([0,T]\times (\T^2\setminus \omega) \times \R^2)}, \nonumber 
\end{align} We introduce another truncation in time. Let $\tilde{\Y} \in \C^{\infty}(\R^+;[0,1])$ such that
\begin{equation}
\begin{array}{ll}
\tilde{\Y}(t) =0, & t\in \left[0,\frac{T}{100}\right], \\
\tilde{\Y}(t) =1, & t\in \left[\frac{T}{48},T\right].
\end{array} 
\end{equation} Finally, we set
\begin{equation}
\begin{array}{cccc}
\Pi: & \C^0_b([0,T]\times (\T^2 \setminus B(x_0,2r_0)) \times \R^2) & \rightarrow & \C^0_b([0,T]\times \T^2 \times \R^2), \\
& f & \mapsto & \Pi f = (1 - \tilde{\Y}(t))f + \tilde{\Y}(t) \pi f.
\end{array}
\label{eq:definitionofpi}
\end{equation} This allows to define the fixed point operator by
\begin{equation}
\V_{\epsilon}[g]:= \ovl{f} + \Pi\left( \tilde{\V}_{\epsilon}[f]_{|([0,T]\times (\T^2 \setminus B(x_0,2r_0)) \times \R^2)\cup([0,\frac{T}{48}] \times \T^2 \times \R^2)} \right), 
\end{equation} for every $(t,x,v) \in [0,T] \times \T^2 \times \R^2$.

\subsection{Existence of a fixed point}
We shall apply the Leray-Schauder fixed point theorem (see \cite[Theorem 1.11, p. 279]{Gilbarg}). To do this, we have to verify that 
\begin{enumerate}
\item The set $\S_{\epsilon}$ is convex and compact in $\C^0_b(Q_T)$,
\item $\V_{\epsilon}: \S_{\epsilon} \subset \C^0_b(Q_T) \rightarrow \C^0_b(Q_T) $ is continuous,
\item $\V_{\epsilon}(\S_{\epsilon}) \subset \S_{\epsilon}$.
\end{enumerate} 

The first point is straightforward, since the convexity of $\S_{\epsilon}$ is clear and the compactness is a consequence of Ascoli's theorem (see, for instance, \cite[Theorem 11.28, p. 245]{Rudin}). The second point is similar to \cite[Section 3.3]{Glass} and holds without further modification, thanks to Lemma \ref{lemma:finitenumberofimpacts}, Lemma \ref{lemma:b} and Lemma \ref{lemma:Gronwall}. \par

We need to show that point (3) holds. Let $g \in \S_{\epsilon}$. We have to prove that $\V_{\epsilon}[g] \in \S_{\epsilon}$, i.e, points (a)--(c), since points (d) and (e) follow by the construction of $\V_{\epsilon}$, using (\ref{eq:momentdevitessebypi}) and (\ref{eq:totalintegralconservedbypi}). \par 

\subsubsection{Proof of point (b)} 
By construction of $\V_{\epsilon}$, we have 
\begin{align}
& \|(1+|v|)^{\gamma+2}\left( \V_{\epsilon}[f] - \ovl{f} \right) \|_{L^{\infty}(Q_T)} \nonumber \\
& \quad \quad = \left\|(1+|v|)^{\gamma+2} \Pi\left( \tilde{\V_{\epsilon}}[g]_{ |\left([0,T]\times (\T^2 \setminus B(x_0,2r_0)) \times \R^2 \cup [0,\frac{T}{48}] \times \T^2 \times \R^2 \right)}  \right)  \right\|_{L^{\infty}(Q_T)} \label{eq:majorationb} \\
& \quad \quad   \leq C_{\pi} \left\| (1+|v|)^{\gamma+2} \tilde{\V_{\epsilon}}[g]  \right\|_{L^{\infty}(Q_T)}, \nonumber 
\end{align} where we have used (\ref{eq:extensioninfty}). Moreover, by (\ref{eq:absortion}) and (\ref{eq:absortionfunction}), 
\begin{equation*}
|f(t^+,x,v)| \leq |f(t^-,x,v)|,
\end{equation*} which implies, through (\ref{eq:explicitsolution}),
\begin{equation*}
|\tilde{\V}_{\epsilon}[g](t,x,v)| \leq \left| e^{2t} f_0 \left( (X^g,V^g)(0,t,x,v) \right)  \right|.
\end{equation*} On the other hand,
\begin{align}
& \left|f_0\left( (X^g,V^g)(0,t,x,v) \right)\right| \nonumber \\
& =  \left( \frac{1+|V^g(0,t,x,v)|}{1+|V^g(0,t,x,v)|}  \right)^{\gamma+2} \left| f_0\left( (X^g,V^g)(0,t,x,v) \right)   \right| \nonumber \\
&\leq  \frac{\|(1+|v|)^{\gamma +2} f_0 \|_{L^{\infty}(Q_T)}}{ \left(  1 + |V^g(0,t,x,v)|  \right)^{\gamma+2}} \label{eq:majorationVg} \\
& = \frac{\|(1+|v|)^{\gamma +2} f_0 \|_{L^{\infty}(Q_T)}}{\left( 1 + \left[e^t|v| - (e^t|v| - |V^g(0,t,x,v)|) \right]  \right)^{\gamma+2}}  \nonumber \\
& \leq  \frac{ \left( 1 + \left| e^t|v| - |V^g(0,t,x,v)| \right| \right)^{\gamma+2} \|(1+|v|)^{\gamma +2} f_0 \|_{L^{\infty}(Q_T)} }{(1 + e^t|v|)^{\gamma +2}} \nonumber \\
& \leq  \frac{(1 + K_3(T,\gamma))^{\gamma + 2}  \|(1+|v|)^{\gamma +2} f_0 \|_{L^{\infty}(Q_T)}}{(1 + e^t|v|)^{\gamma+2}}, \nonumber
\end{align} where we have used (\ref{eq:pointb}) and the inequality (see \cite[Eq. (3.33), p. 347]{Glass}. 
\begin{equation}
\frac{1}{1+ |x-x'|} \leq \frac{1+|x'|}{1+|x|}, \quad  \forall x,x'\in \R^2.
\label{eq:inegalitequoitient}
\end{equation} Furthermore, since 
\begin{equation*}
(1+|v|)^{\gamma +2} |\tilde{\V_{\epsilon}}[g](t,x,v)| \leq (1 + e^t|v|)^{\gamma+2} |\tilde{\V_{\epsilon}}[g](t,x,v)|, 
\end{equation*} for every $(t,x,v)\in [0,T] \times \T^2 \times \R^2$, we have
\begin{align}
& \|(1+|v|)^{\gamma+2} \tilde{\V}_{\epsilon}[g] \|_{L^{\infty}(Q_T)} \label{eq:vtildepointb} \\ 
& \quad \quad \quad \quad \quad \leq e^{2T}\left( 1 +K_3(T,\gamma)\right)^{\gamma+2} \left(\|f_0\|_{\C^1} + \|(1+|v|)^{\gamma+2} f_0\|_{L^{\infty}} \right). \nonumber
\end{align} This gives that $\V_{\epsilon}[g]$ satisfies point (b), thanks to (\ref{eq:majorationb}) and choosing
\begin{equation}
c_1 \geq C_{\pi} e^{2T}\left( 1 + K_3(T,\gamma)\right)^{\gamma+2}.
\label{eq:choiceofc1}
\end{equation}


\subsubsection{Proof of point (c)} We need the following technical result, which can be adapted from \cite[Lemma 2, p. 347]{Glass}, thanks to Lemma \ref{lemma:Gronwall} and (\ref{eq:regularvectorfield}).

\begin{lemma}
For any $g\in \S_{\epsilon}$, one has $\tilde{\V}_{\epsilon}[g] \in \C^1(Q_T \setminus \Sigma_T)$, with $\Sigma_T := [0,T] \times \gamma^-$. Moreover, there exists a constant $K_4=K_4(\gamma,\omega)>0$ such that
\begin{equation*}
\frac{\left| \tilde{\V}_{\epsilon}[g](t,x,v) - \tilde{\V}_{\epsilon}[g](t',x',v')  \right|}{(1+|v|)|(t,x,v) - (t',x',v')|} \leq K_4(\|f_0\|_{\C^1(\T^2\times \R^2)} + \| (1+|v|)^{\gamma+2} f_0 \|_{L^{\infty}(Q_T)}),
\end{equation*} for any $(t,x,v),(t',x',v') \in [0,T]\times (\T^2 \setminus \omega) \times \R^2$ with $|v-v'|<1$. Furthermore, if $f_0$ satisfies (\ref{eq:conditionforuniqueness}), we also have
\begin{eqnarray}
&&\|(1+|v|)^{\gamma+1}\nabla_{x,v}\tilde{\V}_{\epsilon}[g]\|_{L^{\infty}} \nonumber \\
&& \quad \quad \quad \quad  \leq K_5 \left( \|(1+|v|)^{\gamma+1}\nabla_{x,v}f_0 \|_{L^{\infty}(Q_T)} + \| (1+|v|)^{\gamma + 2} f_0 \|_{L^{\infty}} \right), \label{eq:estimeeunicite}
\end{eqnarray} for some constant $K_5=K_5(\kappa, g)>0$.
\label{lemma:pointc}
\end{lemma}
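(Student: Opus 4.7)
The analysis rests on the explicit representation along characteristics: outside $\Sigma_T$, one has
\begin{equation*}
\tilde{\V}_\epsilon[g](t,x,v) = e^{2t}\,\Phi(t,x,v)\,f_0\bigl((X^g,V^g)(0,t,x,v)\bigr),
\end{equation*}
where $\Phi(t,x,v):=\prod_{i=1}^{n(x,v)}\bigl[1-\Y(t_i)(1-\A(\tilde x_i,\tilde v_i))\bigr]$ accumulates the absorption factors at the forward crossings $0<t_1<\dots<t_{n(x,v)}<t$ produced by Lemma~\ref{lemma:finitenumberofimpacts}, with $(\tilde x_i,\tilde v_i):=(X^g,V^g)(t_i,0,x,v)$. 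Since $U^g\in\C^0_t\C^1_x$ by \eqref{eq:regularvectorfield}, Cauchy--Lipschitz yields $\C^1$-regular characteristics; the transversality $|V^g(t_i)\cdot\nu(X^g(t_i))|\geq |V^g(t_i)|/10\geq 1/20$ on $\gamma^-$, together with the implicit function theorem, gives $\C^1$ dependence $t_i=t_i(x,v)$, hence $\tilde{\V}_\epsilon[g]\in\C^1(Q_T\setminus\Sigma_T)$.

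For the weighted Lipschitz-type estimate, Lemma~\ref{lemma:Gronwall} applied to $U^g$ (whose $\C^{0,1}_{t,x}$ norm is bounded uniformly in $g\in\S_\epsilon$ via the arguments of Lemma~\ref{lemma:champsuniformes}) gives, for $|v-v'|<1$,
\begin{equation*}
|(X^g,V^g)(0,t,x,v)-(X^g,V^g)(0,t',x',v')|\leq C(1+|v|)|(t,x,v)-(t',x',v')|.
\end{equation*}
Combined with $\|\nabla f_0\|_{L^\infty}\leq\|f_0\|_{\C^1}$, this controls $e^{2t}\Phi\cdot(f_0\circ(X^g,V^g)(0,t,\cdot))$ with the desired $(1+|v|)$ prefactor; furthermore $|\Phi|\leq 1$, and the same transversality yields $|\nabla_{x,v}t_i|,|\nabla_{x,v}(\tilde x_i,\tilde v_i)|=O(1)$. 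A uniform bound on $n(x,v)$, obtained from the minimum transit time through $B(x_0,r_0)$ (available since $|V^g|\geq 1/2$ on $\gamma^-$ by definition), then keeps $\Phi$ uniformly Lipschitz in $(x,v)$. The $L^\infty$-decay $|f_0((X^g,V^g)(0,t,\cdot))|\leq\|(1+|v|)^{\gamma+2}f_0\|_{L^\infty}/(1+|V^g(0,t,x,v)|)^{\gamma+2}$ combined with Lemma~\ref{lemma:b} absorbs the contribution of $|\nabla_{x,v}\Phi|\cdot|f_0|$ and delivers the constant $K_4$.

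For~\eqref{eq:estimeeunicite} under hypothesis~\eqref{eq:conditionforuniqueness}, I differentiate the representation and multiply by $(1+|v|)^{\gamma+1}$. Standard Gronwall estimates based on~\eqref{eq:characteristics} (as in Step~2 of the proof of Lemma~\ref{lemma:Gronwall}) show that $|\nabla_{x,v}(X^g,V^g)(0,t,x,v)|$ is bounded uniformly in $(t,x,v)$, so that
\begin{equation*}
(1+|v|)^{\gamma+1}\,|\nabla f_0((X^g,V^g)(0,t,\cdot))|\,|\nabla_{x,v}(X^g,V^g)(0,t,\cdot)|\leq C\kappa\,\frac{(1+|v|)^{\gamma+1}}{(1+|V^g(0,t,x,v)|)^{\gamma+1}},
\end{equation*}
which is uniformly bounded thanks to Lemma~\ref{lemma:b}. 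The remaining contribution $f_0\cdot\nabla_{x,v}\Phi$ is handled via the decay $|f_0|\leq\|(1+|v|)^{\gamma+2}f_0\|_{L^\infty}/(1+|V^g|)^{\gamma+2}$ paired with the $O(1)$-bound on $\nabla_{x,v}\Phi$ from the preceding paragraph. The main obstacle throughout is a clean bookkeeping of the absorption jumps: the uniform bound on $n(x,v)$, the $\C^1$ dependence of the crossing times $t_i(x,v)$ and their stability under perturbations of $(x,v)$ must all descend from the single transversality condition $\langle v,\nu\rangle\leq -|v|/10$ built into the definition of $\gamma^-$.
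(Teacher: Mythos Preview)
Your outline is correct and coincides with the route the paper intends: the paper does not give an independent proof but simply states that the lemma ``can be adapted from \cite[Lemma~2, p.~347]{Glass}, thanks to Lemma~\ref{lemma:Gronwall} and \eqref{eq:regularvectorfield}''. Your sketch makes explicit exactly those ingredients --- the representation $e^{2t}\Phi\, f_0\!\circ(X^g,V^g)(0,t,\cdot)$, the transversality in $\gamma^-$ giving $\C^1$ dependence of the crossing data, Lemma~\ref{lemma:Gronwall} for the $(1+|v|)$-weighted Lipschitz bound on the characteristics, and Lemma~\ref{lemma:b} together with \eqref{eq:inegalitequoitient} to convert decay in $V^g(0,t,x,v)$ into decay in $v$.

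One point deserves correction. Your claim of a \emph{uniform} bound on $n(x,v)$ from a ``minimum transit time through $B(x_0,r_0)$'' does not hold as stated: since $|V^g|$ is only bounded below by $1/2$ on $\gamma^-$ but can be of order $|v|$, the transit time between successive $\gamma^-$-crossings is only $\gtrsim r_0/|V^g|$, so that $n(x,v)=O(1+|v|)$ rather than $O(1)$ (this follows cleanly from the total arc-length $\int_0^T|V^g|\,\ud t=O(1+|v|)$ and the fact that each $\gamma^-$-chord through the ball has length $\gtrsim r_0$ by the angle condition). Fortunately this is harmless for your argument. You correctly note that $|\nabla_{x,v}t_i|=O(1/|V^g(t_i)|)$ from the transversality, which cancels the $|V^g(t_i)|$ appearing in $\partial_t(X^g,V^g)$ and leaves $|\nabla_{x,v}(\tilde x_i,\tilde v_i)|=O(1)$; summing over $n(x,v)=O(1+|v|)$ factors then yields $|\nabla_{x,v}\Phi|=O(1+|v|)$ rather than $O(1)$. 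In the first estimate this extra factor is exactly the $(1+|v|)$ allowed in the denominator; in \eqref{eq:estimeeunicite} it is absorbed by the decay $|f_0\!\circ(X^g,V^g)|\leq C\|(1+|v|)^{\gamma+2}f_0\|_{L^\infty}(1+|v|)^{-(\gamma+2)}$ obtained via Lemma~\ref{lemma:b}, leaving the product $(1+|v|)^{\gamma+1}\cdot(1+|v|)\cdot(1+|v|)^{-(\gamma+2)}=O(1)$. With this amendment your proof goes through.
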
 

Let $\delta_2$ be given by (\ref{eq:parametersdelta}). Again, by construction of $\V_{\epsilon}$ and (\ref{eq:extensionHolder}), we deduce
\begin{equation}
\| \V_{\epsilon}[g] - \ovl{f} \|_{\C_b^{0,\delta_2}(Q_T)} \leq C_{\pi,\delta_2} \| \tilde{\V}_{\epsilon}[g] \|_{\C_b^{0,\delta_2}([0,T] \times (\T^2 \setminus B(x_0,2r_0)) \times \R^2)}.
\label{eq:majorationc}
\end{equation} Then, interpolating (\ref{eq:vtildepointb}) and Lemma \ref{lemma:pointc}, we have
\begin{eqnarray}
&& \frac{|\tilde{\V}_{\epsilon}[g](t,x,v) - \tilde{\V}_{\epsilon}[g](t',x',v')|}{|(t,x,v) - (t',x',v')|^{\delta_2}} \nonumber \\
&& = \left( \frac{|\tilde{\V}_{\epsilon}[g](t,x,v) - \tilde{\V}_{\epsilon}[g](t',x',v')|}{(1+|v|)|(t,x,v) - (t',x',v')|}  \right)^{\frac{\gamma +2}{\gamma +3}} \nonumber \\
&& \quad \quad \quad \quad \quad \quad \quad \quad \times \left( (1 + |v|)^{\gamma+2} |\tilde{\V}_{\epsilon}[g](t,x,v) - \tilde{\V}_{\epsilon}[g](t',x',v')| \right)^{1 - \frac{\gamma+2}{\gamma+3}} \nonumber \\
&& \leq K_4^{\frac{\gamma +2}{\gamma +3}}K_6^{1-\frac{\gamma +2}{\gamma +3}} \left( \|f_0\|_{\C^1(\T^2\times \R^2)} + \| (1+|v|)^{\gamma+2} f_0  \|_{L^{\infty}(Q_T)} \right), \nonumber 
\end{eqnarray} with 
\begin{equation*}
K_6 = 2e^{2T}\left( 1 + K_3(T,\gamma)\right)^{\gamma+2}.
\end{equation*} Whence, by (\ref{eq:majorationc}), this gives that $\tilde{\V}_{\epsilon}[g]$ satisfies point (c), choosing
\begin{equation}
c_2 \geq C_{\pi,\delta_2}K_4(\gamma,\omega)^{\frac{\gamma + 2}{\gamma + 3}} K_6(T,\gamma)^{1 - \frac{\gamma + 2}{\gamma + 3}}.
\label{eq:choiceofc2}
\end{equation}


\subsubsection{Proof of point (a)} We show first the $L^{\infty}$ estimate. Using (\ref{eq:referencedensitenulle}) and point (b), we find 
\begin{eqnarray}
\left\| \int_{\R^2} \left( \V_{\epsilon}[g] - \ovl{f} \right)  \ud v \right\|_{L^{\infty}(\Omega_T)} &=& \left\| \int_{\R^2} \left( \V_{\epsilon}[g](t,x,v)  \right)  \ud v \right\|_{L^{\infty}(\Omega_T)} \nonumber \\
& = &\sup_{(t,x) \in \Omega_T}  \int_{\R^2} \left|\V_{\epsilon}[g] (t,x,v) \right|  \ud v \nonumber \\
& \leq & K_7 (\|f_0\|_{\C^1} + \|(1+|v|)^{\gamma +2} f_0 \|_{L^{\infty}}),  \nonumber 
\end{eqnarray} with
\begin{equation*}
K_7 := c_1 \int_{\R^2} \frac{\ud v}{(1 + |v|)^{\gamma + 2}}.
\end{equation*} To show the H\"{o}lder estimate, we interpolate (\ref{eq:vtildepointb}) and (c). Indeed, if $\delta_1$ is given by (\ref{eq:parametersdelta}) and $\tilde{\gamma}:= 2 + \frac{\gamma }{2}$, we have 
\begin{eqnarray}
&& (1+|v|)^{\tilde{\gamma}}\frac{|\V_{\epsilon}[g](t,x,v) - \V_{\epsilon}[g](t',x',v)|}{|(t,x,v) - (t',x',v)|^{\delta_1}} \nonumber \\
&& = \left( (1 + |v|)^{\gamma+2} |\V_{\epsilon}[g](t,x,v) -\V_{\epsilon}[g](t',x',v)| \right)^{\frac{1}{2} + \frac{1}{\gamma +2}} \nonumber \\
&& \quad \quad \quad \quad \quad \quad \quad \times \left( \frac{|\V_{\epsilon}[g](t,x,v) - \V_{\epsilon}[g](t',x',v)|}{|(t,x,v) - (t',x',v)|^{\delta_2}} \right)^{\frac{1}{2} - \frac{1}{\gamma +2}} \nonumber \\
&& \leq c_1^{\frac{1}{2} + \frac{1}{\gamma +2 }} c_2^{\frac{1}{2} - \frac{1}{\gamma + 2}} \left( \|f_0\|_{\C^1(\T^2\times \R^2)} + \| (1+|v|)^{\gamma+2} f_0   \|_{L^{\infty}(Q_T)}  \right). \nonumber 
\end{eqnarray} Consequently, choosing
\begin{equation}
c_3 : = K_7 + c_1^{\frac{1}{2} + \frac{1}{\gamma +2 }} c_2^{\frac{1}{2} - \frac{1}{\gamma + 2}} \int_{\R^2} \frac{\ud v}{(1+|v|)^{\tilde{\gamma}}},
\label{eq:choiceofc3}
\end{equation} and thanks to (\ref{eq:smalldata}), we have that $\V_{\epsilon}[g]$ satisfies point (a). \par

\bigskip
Let us choose $\epsilon_0$ sufficiently small and satisfying (\ref{eq:choiceofepsilon0}). Then, the smallness assumption (\ref{eq:smalldata}) and the properties of $\V_{\epsilon} $ and $\Pi$ allow to conclude. Thus, if $\epsilon \leq \epsilon_0$, thanks to Leray-Schauder theorem, there exists $g \in \S_{\epsilon}$ such that $\V_{\epsilon}[g] = g$.


\subsection{Uniqueness}
The goal of this section is to show that the solution of (\ref{eq:VS}) obtained in the previous section is unique within a certain class. \par 
Indeed, let $\epsilon \leq \epsilon_0$ and $g=\V_{\epsilon}[g]$. Then. if $f_0\in \C^1(\T^2 \times \R^2)$ satisfies (\ref{eq:conditionforuniqueness}), Lemma \ref{lemma:pointc} gives (\ref{eq:estimeeunicite}). By the construction of $\ovl{f}$ and $\tilde{\V}_{\epsilon}$, and since $\Pi$ preserves regularity, we deduce that
\begin{align}
& g\in \C^1(Q_T), \label{eq:regulariteunicite} \\ 
& \exists \kappa'>0, \, \left( |g| + |\nabla_{x,v}g| \right)(t,x,v) \leq \frac{\kappa'}{(1 + |v|)^{\gamma+1}}, \quad \forall (t,x,v) \in Q_T, \label{eq:classeunicite} \\
& \int_{\T^2} j_{g}(t,x) \ud x = 0, \quad \forall t \in [0,T].   \label{eq:jgunicite}
\end{align} Next result, inspired from \cite[Section 8]{Ukai}, shows that the solution in this class is unique. 

\begin{proposition}
Let $f_0 \in \C^1(\T^2\times \R^2)$ satisfying (\ref{eq:conditionforuniqueness}) and let $G \in \C^0(Q_T)$. Then, the solution of system (\ref{eq:VS}) satisfying conditions (\ref{eq:regulariteunicite}), (\ref{eq:classeunicite}) and (\ref{eq:jgunicite}) is unique.
\label{proposition:uniqueness}
\end{proposition}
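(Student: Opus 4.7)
The plan is to derive a closed linear Gronwall estimate for a weighted sup-norm of the difference of two putative solutions.

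Let $f_1, f_2$ be two solutions of (\ref{eq:VS}) sharing the initial datum $f_0$ and the control $G$, both fulfilling (\ref{eq:regulariteunicite})--(\ref{eq:jgunicite}), and let $U_1, U_2$ be the associated Stokes velocity fields. Setting $h := f_1 - f_2$ and $W := U_1 - U_2$, the terms $1_{\omega} G$ cancel on subtraction and $h$ solves
\begin{equation*}
\partial_t h + v \cdot \nabla_x h + (U_1 - v) \cdot \nabla_v h - 2 h = - W \cdot \nabla_v f_2, \qquad h(0,\cdot,\cdot) = 0,
\end{equation*}
while $W$ solves the Stokes system with zero spatial mean, sourced by $j_h(t) = j_{f_1}(t) - j_{f_2}(t)$, the compatibility $\int_{\T^2} j_h \ud x = 0$ being ensured by (\ref{eq:jgunicite}).

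\emph{Step 1 --- transport representation and weighted control of $h$.} Integrating along the characteristics $(X_1, V_1)$ of $-v + U_1$ and applying Duhamel's formula yields
\begin{equation*}
|h(t,x,v)| \leq e^{2T} \int_0^t \| W(s) \|_{L^\infty(\T^2)} \, |\nabla_v f_2|\bigl( s, (X_1, V_1)(s,t,x,v) \bigr) \ud s.
\end{equation*}
By (\ref{eq:classeunicite}), $|\nabla_v f_2|(s, X, V) \leq \kappa' / (1 + |V|)^{\gamma + 1}$. The explicit formula (\ref{eq:characteristics}) together with Lemma \ref{lemma:champsuniformes} gives, for $s \leq t$, a backward lower bound $|V_1(s,t,x,v)| \geq e^{t-s}|v| - C(T,\gamma)$, and consequently $(1 + |V_1(s,t,x,v)|)^{-1} \leq C(T,\gamma)(1 + |v|)^{-1}$ uniformly in $s, t, x$. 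Setting $M(t) := \sup_{(x,v)} (1 + |v|)^{\gamma + 1} |h(t,x,v)|$, this produces
\begin{equation*}
M(t) \leq C_1 \kappa' \int_0^t \| W(s) \|_{L^\infty(\T^2)} \ud s.
\end{equation*}

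\emph{Step 2 --- Stokes bound for $W$ in terms of $M$.} Proposition \ref{proposition:StokesLp} applied to $W(s)$ with source $j_h(s)$, combined with the Sobolev embedding $W^{2,p}(\T^2) \hookrightarrow L^\infty(\T^2)$ for some $p > 2$, gives $\| W(s) \|_{L^\infty(\T^2)} \leq C \| j_h(s) \|_{L^p(\T^2)}$. The weighted control of $h$ yields the pointwise bound
\begin{equation*}
|j_h(s,x)| \leq M(s) \int_{\R^2} \frac{|v| \ud v}{(1 + |v|)^{\gamma + 1}} \leq C_\gamma M(s),
\end{equation*}
where the moment converges precisely because $\gamma > 2$. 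Composing the two steps,
\begin{equation*}
M(t) \leq C \int_0^t M(s) \ud s,
\end{equation*}
and Gronwall's lemma forces $M \equiv 0$, hence $f_1 = f_2$.

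The main obstacle, in my view, is Step 2: transferring the pointwise decay of $h$ into an $L^\infty$ bound on the Stokes field $W$. This requires $L^p$ rather than merely $L^2$ elliptic regularity for Stokes (to embed into $L^\infty$ in two dimensions), and the weight $(1 + |v|)^{\gamma + 1}$ chosen for $M$ is calibrated as the minimal polynomial decay which both makes the first $v$-moment of $h$ integrable and is naturally propagated by the transport operator through the characteristic lower bound of Lemma \ref{lemma:champsuniformes}. This calibration is exactly what closes the loop linearly in $M$; a cruder moment bound would yield only a sublinear inequality, insufficient to conclude uniqueness via Gronwall.
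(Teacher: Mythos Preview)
Your proof is correct and follows the same route as the paper's: Duhamel along the characteristics of $-v+U_1$, the decay hypothesis (\ref{eq:classeunicite}) on $\nabla_v f_2$ transported back via the velocity lower bound, Stokes regularity plus Sobolev embedding to control $\|W\|_{L^\infty}$ in terms of $j_h$, and a linear Gronwall loop. Two minor remarks: the paper closes the Gronwall inequality on $\sup_x|j_g(t,x)|$ rather than on your weighted sup-norm $M(t)$ (an equivalent choice here), and your assertion that $L^p$ Stokes regularity is \emph{required} is not accurate in dimension two --- since $H^2(\T^2)\hookrightarrow L^\infty(\T^2)$, the $L^2$ estimate of Proposition~\ref{proposition:Stokesclassical} already suffices, and this is what the paper uses.
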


\begin{proof}
Let $f_1 = \tilde{\V}_{\epsilon}[f_1]$, for $\epsilon \leq \epsilon_0$. Let us suppose that $(f^2,U^2,p^2)$ is a solution of system (\ref{eq:VS}) with initial datum $f_0$ and control $G$ such that (\ref{eq:regulariteunicite}) and (\ref{eq:classeunicite}) are satisfied. \par 
Let $W:=U^1-U^2$, $g:=f^1 - f^2$, $\pi:= p^1 - p^2$. Then, $(g,W,\pi)$ satisfies
\begin{equation}
\left\{ \begin{array}{ll}
\partial_t g + v\cdot \nabla_x g + \div_v[(U^1-v)g] = -W\cdot \nabla_v f^2, & (t,x,v) \in Q_T, \\
-\Delta_x W +\nabla_x \pi = j_g, & (t,x)\in \Omega_T, \\
\div_x W =0, & (t,x) \in \Omega_T, \\
\int_{\T^2} W(t,x) \ud x = 0, & t\in [0,T], \\
g_{|t=0} = 0, & (x,v) \in \T^2 \times \R^2.
\end{array} \right.
\label{eq:systemeunicite}
\end{equation} Using Proposition \ref{proposition:Stokesclassical}, we get, for every $t\in [0,T]$, 
\begin{equation*}
\|W(t)\|_{(H_x^2)^2} \leq C \|j_g(t)\|_{(L^2_x)^2}.
\end{equation*} Moreover, the Sobolev embedding theorem gives, for every $t \in [0,T]$, 
\begin{equation}
\|W(t)\|_{(L^{\infty}_x)^2} \leq C' \|j_g(t)\|_{(L^2_x)^2}.
\label{eq:majorationchampdevitesses}
\end{equation} On the other hand, we observe that condition (\ref{eq:classeunicite}) gives 
\begin{align*}
& (1+|v|)|\nabla_{x,v}f^2(t,(X^1,V^1)(0,t,x,v))|  \\
& \quad \quad \quad \quad \quad \quad \leq \frac{\kappa' (1+|v|)}{(1+ |V^1(0,t,x,v)|)^{\gamma+1}}  \\
& \quad \quad \quad \quad \quad \quad \leq \frac{C(\kappa',\gamma)}{(1 + |v|)^{\gamma}},  
\end{align*} proceeding in the same fashion as in (\ref{eq:majorationVg}). As a result,
\begin{equation}
\sup_{(t,x) \in \Omega_T}\int_{\R^2} (1+|v|) \left|\nabla_v f^2 (t,(X^1,V^1)(0,t,x,v)\right| \ud v \leq \tilde{C}(\kappa',\gamma),
\label{eq:majorationgradf2}
\end{equation} for some constant $\tilde{C}(\kappa',\gamma)>0$. Consequently, from the Vlasov equation in (\ref{eq:systemeunicite}), by the method of characteristics, we have
\begin{eqnarray}
|g(t,x,v)| &\leq & e^{2T} \left|\int_0^t W(s,X^1(0,s,x,v))\cdot \nabla_v f^2(s,(X^1,V^1)(0,s,x,v) \ud s  \right| \nonumber \\
& \leq & e^{2T} \int_0^t \|W(s,\cdot)\|_{L^{\infty}_x} \left|\nabla_v f^2(s,(X^1,V^1)(0,s,x,v)\right| \ud s. \nonumber
\end{eqnarray} Thus,
\begin{equation*}
(1+|v|)|g(t,x,v)| \leq \int_0^t \|W(s,\cdot)\|_{L^{\infty}_x} (1+|v|)\left|\nabla_v f^2(s,(X^1,V^1)(0,s,x,v)\right| \ud s,
\end{equation*} which implies, thanks to (\ref{eq:majorationgradf2}) and (\ref{eq:majorationchampdevitesses}),
\begin{eqnarray}
\sup_{x \in \T^2 } |j_g(t,x)| &\leq & C\int_0^t \|W(s,\cdot)\|_{L^{\infty}_x} \ud s \nonumber \\
& \leq & C' \int_0^t \| j_g(s)\|_{(L^2_x)^2} \ud s \nonumber \\
& \leq & C'' \int_0^t \sup_{x\in \T^2 } \left|j_g(s)\right| \ud s, \quad \forall t\in [0,T], \nonumber
\end{eqnarray} which, by Gronwall's lemma entails, since $j_g(0) = 0$, that
\begin{equation*}
j_g(t,x) = 0, \quad \forall (t,x) \in \Omega_T.
\end{equation*} Moreover, using again system (\ref{eq:systemeunicite}) we deduce from this that the difference $W (t)= (U^1-U^2)(t)$ satisfies, for every $t \in [0,T]$,
\begin{equation*}
\left\{ \begin{array}{ll}
-\Delta_x W(t) +\nabla_x \pi(t) = 0, & x\in \T^2, \\
\div_x W(t) =0, & x \in \T^2, \\
\int_{\T^2} W(t)\ud x = 0, 
\end{array} \right.
\end{equation*} which, according to Proposition \ref{proposition:Stokesclassical} must imply that $U^1 = U^2$ in $\Omega_T$. In particular, the characteristics associated to $-v + U^1$ and to $-v + U^2$ coincide and thus, $f^1 = f^2$ in $Q_T$.
\end{proof}


\section{End of the proof}
In order to conclude the proof of Theorem \ref{thm:controllability} we have to show that if we choose $\epsilon \in (0, \epsilon_1)$ with $\epsilon_1$ small enough, then the fixed point $g$ found in the previous section satisfies (\ref{eq:finalstate}). To do this, we show that $\V_{\epsilon}[g]_{|t=T}=0$ in $(\T^2 \setminus \omega)\times \R^2$ (see the strategy of proof in Section 1.3.3). The key result is the following.

\begin{proposition}
There exists $\epsilon_1 >0$ such that the  characteristics $(X^g,V^g)$ associated to the field $  - v + U^g$ meet $\gamma^{3-}$ for some time in $\left[ \frac{T}{24}, \frac{23T}{24} \right]$. 
\label{proposition:gamma4}
\end{proposition}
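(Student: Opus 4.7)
\emph{Plan.} I would prove Proposition \ref{proposition:gamma4} by a perturbation argument around the reference field $\ovl U$: for $\epsilon$ small enough, the characteristics of $-v + U^g$ remain close, uniformly in the initial data, to those of $-v + \ovl U$, and the construction of Section 3 was precisely tuned so that the latter cross $S(x_0,r_0)$ in $\gamma^{3-}$ during an interior subinterval of $(T/24, 23T/24)$.

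\emph{Step 1: closeness of $U^g$ and $\ovl U$, and of their characteristics.} Combining condition (b) in the definition of $\S_\epsilon$ with the smallness (\ref{eq:smalldata}) of $f_0$ gives $\|(1+|v|)^{\gamma+2}(g - \ovl f)\|_{L^\infty(Q_T)} \lesssim \epsilon$. Integrating in $v$ with the weight $(1+|v|)^{-(\gamma+1)}$ produces $\|j_g - j_{\ovl f}\|_{L^\infty_t L^p_x} \lesssim \epsilon$ for every $p \in [2,\infty]$. Since $(U^g - \ovl U, p^g - \ovl p)$ solves a Stokes system with source $j_g - j_{\ovl f}$, applying the $L^p$-Stokes estimate with some $p>2$ combined with Sobolev embedding yields
$$\|U^g - \ovl U\|_{C^0([0,T]; C^1(\T^2;\R^2))} \lesssim \epsilon.$$
A Gronwall estimate on the difference of the characteristic ODE (\ref{eq:characteristcsystem}), exploiting the uniform $C^1$-bound on $\ovl U$, then gives
$$\sup_{t \in [0,T]} \bigl|(X^g,V^g)(t,0,x,v) - (\ovl X,\ovl V)(t,0,x,v)\bigr| \leq C(T,\ovl U)\,\epsilon,$$
uniformly in $(x,v) \in \T^2 \times \R^2$.

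\emph{Step 2: the reference trajectory lands in $\gamma^{3-}$.} I follow the splitting (\ref{eq:referencevectorfield}) of $\ovl U$. If $|v| \geq M_1$, Proposition \ref{proposition:highvelocities} applied to $\ovl U_1$ on $[0, T/3]$ gives some $t^* \in (T/12, T/4)$ with $\ovl X(t^*) \in B(x_0, r_0/4)$ and $|\ovl V(t^*)| \geq M_1/(2 e^{T/3})$. If $|v| < M_1$, Proposition \ref{proposition:lowvelocities} first raises $|\ovl V|$ into $B(0, \ovl M) \setminus B(0, M_1+1)$ during $[T/3, 2T/3]$, and the second copy of $\ovl U_1$ on $[2T/3, T]$ produces some $t^* \in (3T/4, 11T/12)$ with the same conclusion. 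In both cases $t^* \in (T/24, 23T/24)$. Let $t_1 < t^*$ be the last crossing time of $\ovl X$ with $S(x_0,r_0)$ before $t^*$. The parameter choices in (\ref{eq:choiceofalpha})--(\ref{eq:choiceofM}) were tailored so that $t^* - t_1 \leq T/200$, cf. (\ref{eq:choiceofC}), so that $|\ovl V|$ stays of order $\geq \alpha$ on $(t_1, t^*)$, and so that on this short interval the pulse structure (\ref{eq:choiceoflargevelocityfield}) of $\ovl U_1$ deflects $\ovl V$ only by a small fraction of $|\ovl V|$. The near-straight-line approximation of $\ovl X$ on $(t_1, t^*)$ then shows that the tangential drift at the entry point is at most a small fraction of the radial penetration $3r_0/4$, forcing $|\ovl V(t_1)| \geq 2$ and $\langle \ovl V(t_1), \nu(\ovl X(t_1))\rangle \leq -|\ovl V(t_1)|/5$ with a strict, uniform-in-$(x,v)$ margin; hence $(\ovl X(t_1), \ovl V(t_1)) \in \gamma^{3-}$.

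\emph{Step 3 and main obstacle.} Because $\gamma^{3-}$ is open and Step 2 provides a uniform margin, the $O(\epsilon)$ closeness of Step 1 ensures that, choosing $\epsilon_1$ small enough, the $U^g$-characteristic $(X^g, V^g)$ also satisfies the strict inequalities defining $\gamma^{3-}$ at a time close to $t_1$, still inside $[T/24, 23T/24]$. The delicate part is the angle-of-entry estimate in Step 2: Propositions \ref{proposition:highvelocities} and \ref{proposition:lowvelocities} only assert that $\ovl X$ enters $B(x_0, r_0/4)$, not that the incidence at $S(x_0, r_0)$ is sufficiently transverse. Converting the deep-penetration information into the angle bound $\langle \ovl V, \nu \rangle \leq -|\ovl V|/5$ is precisely what the quantitative choices of $\alpha$, $\mathcal{C}_{r_0, T}$ and $M_1$ in (\ref{eq:choiceofalpha})--(\ref{eq:choiceofC}) were engineered for, and where friction must be controlled carefully so as not to degrade the direction of $\ovl V$ in between the pulses.
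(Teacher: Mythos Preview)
Your outline matches the paper's proof: show that the reference characteristics meet a set strictly inside $\gamma^{3-}$, then perturb via Stokes estimates on $U^g-\ovl U$. Two places where the paper is cleaner than your sketch. First, instead of asserting a ``strict, uniform-in-$(x,v)$ margin'', the paper introduces an explicit intermediate set $\gamma^{4-}:=\{(x,v)\in S(x_0,r_0)\times\R^2:\ |v|\geq 5/2,\ \langle v,\nu(x)\rangle\leq -|v|/4\}$ and proves $(\ovl X,\ovl V)(\sigma)\in\gamma^{4-}$ for the reference; the perturbation to $\gamma^{3-}$ is then immediate since $\gamma^{4-}$ sits strictly inside $\gamma^{3-}$. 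Second, your angle-of-entry argument is more involved than necessary. The paper simply observes the elementary geometric fact that any straight line issued from $B(x_0,r_0/4)$ meets $S(x_0,r_0)$ at an angle at most $\pi/6$ with the normal, so the free-flight approximation on the short interval $[\sigma,t^*]$ directly gives $\langle\ovl V,\nu\rangle\leq -\tfrac{\sqrt3}{2}|\ovl V|$, comfortably below the $-|v|/4$ threshold. The quantitative choices (\ref{eq:choiceofalpha})--(\ref{eq:choiceofM}) are there to keep the crossing time within $T/200$ and to ensure $|\ovl V|\geq 5/2$ survives, not to salvage the angle.

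One small correction: your case split in Step~2 should be on $|\ovl V(T/3,0,x,v)|\lessgtr M_1$ rather than on $|v|$. Proposition~\ref{proposition:lowvelocities} requires the velocity at the \emph{start} of its time window $[T/3,2T/3]$ to lie in $B_{\R^2}(0,M_1)$, and $\ovl U_1$ may have moved $|v|$ across the threshold during $[0,T/3]$. The paper's split on $|\ovl V(T/3)|$ handles both cases cleanly: if $|\ovl V(T/3)|\geq M_1$ one works backwards via (\ref{eq:characteristics}) to get $|v|\geq\udl m_1$ and applies Proposition~\ref{proposition:highvelocities} on $[0,T/3]$; otherwise one applies Proposition~\ref{proposition:lowvelocities} on $[T/3,2T/3]$ and then Proposition~\ref{proposition:highvelocities} on $[2T/3,T]$.
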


\begin{proof}
Let us define 
\begin{equation*}
\gamma^{4-}:= \left\{ (x,v) \in S(x_0,r_0) \times \R^2; \quad |v| \geq \frac{5}{2}, \, \langle v, \nu(x) \rangle \leq - \frac{|v|}{4} \right\}.
\end{equation*} We proceed in two steps. In a first time, we show the result for the characteristics associated to $-v+\ovl{U}$, given by (\ref{eq:referencevectorfield}). In a second time, we show that, thanks to the first step, the result for $(X^g,V^g)$ follows by choosing $\epsilon_1>0$ small enough.

\begin{description}

\item[Step 1] Let us consider the characteristics $(\ovl{X},\ovl{V})$ associated to the field $-v + \ovl{U}$. We claim that
\begin{equation}
\exists \, \sigma \in \left[ \frac{T}{12}, \frac{3T}{12} \right] \cup \left[ \frac{9T}{12},\frac{11T}{12} \right] \textrm{ such that } \ovl{X}(\sigma,0,x,v) \in \gamma^{4-}.
\label{eq:claimend}
\end{equation} To show this claim, we need to prove the following 
\begin{equation}
\exists \, t \in \left[ \frac{T}{12}, \frac{3T}{12} \right] \cup \left[ \frac{9T}{12},\frac{11T}{12} \right] \textrm{ s.t } \ovl{X}(t,0,x,v) \in B\left( x_0, \frac{r_0}{4} \right), \, |\ovl{V}(t,0,x,v)|\geq \alpha,
\label{eq:claimend2}
\end{equation} where $\alpha$ is given by (\ref{eq:choiceofalpha}). Indeed, let $M_1$ be given by (\ref{eq:choiceofM}) and let us consider two cases.
\begin{description}

\item[Case 1] If $|\ovl{V}(\frac{T}{3},0,x,v)| \geq M_1$, then using (\ref{eq:characteristics}), one obtains that 
\begin{align*}
|v| & \geq  e^{\frac{T}{3}} \left|\ovl{V} (\frac{T}{3},0,x,v) \right| - e^{\frac{T}{3}}\left| \int_0^{\frac{T}{3}} e^{\tau-\frac{T}{3}} \ovl{U}_1 (\tau, \ovl{X}(\tau,0,x,v)) \ud \tau \right|  \\
& \geq  e^{\frac{T}{3}} M_1 - \frac{T}{3}e^{\frac{T}{3}}\| \ovl{U}_1 \|_{\C^{0,1}_{t,x}}. 
\end{align*} Then, by the choice (\ref{eq:choiceofM}), this implies
\begin{equation}
|v| \geq e^{\frac{T}{3}}\max\left\{ \udl{m}_1, 2\alpha \right\}.
\label{eq:minorationM}
\end{equation}   Hence, in particular, we get $|v| \geq \udl{m}_1$. This allows to apply Proposition \ref{proposition:highvelocities}, which gives that $\exists t \in \left[ \frac{T}{12}, \frac{3T}{12} \right]$ such that $\ovl{X}(t,0,x,v) \in B(x_0,\frac{r_0}{4})$. \par 
Morover, we deduce from (\ref{eq:minorationM}) that 
\begin{equation*}
|v| \geq 2e^{\frac{T}{3}}\alpha,
\end{equation*} which entails, thanks to (\ref{eq:largevelocitycharacteristics}), that $|\ovl{V}(t,0,x,v)| \geq \alpha$. Thus, (\ref{eq:claimend2}) is satisfied in this case. 

\item[Case 2] If $|\ovl{V}(\frac{T}{3},0,x,v)| < M_1$, then Proposition \ref{proposition:lowvelocities} implies that 
\begin{equation*}
\left|\ovl{V}(\frac{2T}{3},0,x,v) \right| \geq 1 + M_1.
\end{equation*} Proceeding as in the previous case, this  yields (\ref{eq:claimend2}) with some $t \in \left[ \frac{9T}{12}, \frac{11T}{12} \right]$.
\end{description} This shows (\ref{eq:claimend2}). \par 

Let us prove (\ref{eq:claimend}). We choose $s>0$ with 
\begin{equation*}
s < \log \left( 1 + \frac{9r_0}{ 4\alpha}  \right) < \frac{T}{200},
\end{equation*} thanks to (\ref{eq:choiceofalpha}) and (\ref{eq:choiceofC}), and such that 
\begin{eqnarray}
&& \ovl{X}(t,0,x,v) + (1 - e^s)\ovl{V}(t,0,x,v) \in S(x_0,2r_0), \label{eq:propertysss} \\
&& \langle \ovl{V}(t,0,x,v) , \nu(x) \rangle \leq -\frac{\sqrt{3}}{2}|\ovl{V}(t,0,x,v)|. \label{eq:propertiyvitesesss}  
\end{eqnarray} The last point follows from the fact that any straight line arising from $B(x_0, \frac{r_0}{4})$ cuts $S(x_0,r_0)$ forming an angle with $\nu(x)$ of value at most $\frac{\pi}{6}$. To see (\ref{eq:propertysss}), we observe that, choosing $s_0:=\log \left( 1 + \frac{9r_0}{4\alpha} \right)$ one has
\begin{eqnarray}
&& |\ovl{X}(t,0,x,v) + (1-e^{s_0})\ovl{V}(t,0,x,v) - x_0 | \nonumber \\
&& \quad \quad \quad \quad  \quad \quad \quad \geq (e^{s_0} -1)\alpha  - |\ovl{X}(t,0,x,v) - x_0 | \nonumber \\
&& \quad \quad \quad \quad  \quad \quad \quad \geq \frac{9r_0}{4} - \frac{r_0}{4} = 2 r_0. \nonumber 
\end{eqnarray} Hence, by the intermediate value theorem, there exists $s$ with $0<s \leq s_0$ such that (\ref{eq:propertysss}) holds.\par 
Moreover, we deduce from (\ref{eq:characteristics}) and (\ref{eq:claimend2}) that
\begin{eqnarray}
&& \left| \ovl{X}(t,0,x,v) + (1 - e^s)\ovl{V}(t,0,x,v)   - \ovl{X}(t-s,0,x,v)  \right| \nonumber \\
&& \quad \quad  = \left| \ovl{X}(t,0,x,v) + (1 - e^s)\ovl{V}(t,0,x,v)   - \ovl{X}(t-s,t,(\ovl{X},\ovl{V})(t,0,x,v))  \right| \nonumber \\
&& \quad \quad   = \left| \int_t^{t-s} \int_t^{\sigma} e^{\sigma - z} \ovl{U}_1(z,\ovl{X}(z,\ovl{X}(t,0,x,v))) \ud z \ud \sigma \right| \nonumber \\
&& \quad \quad   \leq Te^Ts\|\ovl{U}_1\|_{\C^0_{t,x}} \nonumber \\
&& \quad \quad   \leq C(T) \log \left( 1 + \frac{9r_0}{4\alpha} \right) \|\ovl{U}_1\|_{\C^0_{t,x}} \nonumber \\
&& \quad \quad   \leq C'(T) \frac{9r_0}{4\alpha} \|\ovl{U}_1\|_{\C^0_{t,x}} \leq \frac{C(T,r_0)}{\mathcal{C}_{r_0,T}}, \nonumber
\end{eqnarray} using (\ref{eq:choiceofalpha}) and the fact that $\log(1 + x)\leq x$ for $x$ small. We may choose $\mathcal{C}_{r_0,T}$ large enough, together with (\ref{eq:choiceofC}), so that
\begin{equation*}
\left| \ovl{X}(t,0,x,v) + (1 - e^s)\ovl{V}(t,0,x,v) - \ovl{X}(t-s,0,x,v)  \right| < \frac{r_0}{2},
\end{equation*} which allows to deduce, thanks to (\ref{eq:propertysss}), that 
\begin{equation*}
\ovl{X}(t-s,0,x,v) \not \in B(x_0,r_0).
\end{equation*} Hence, by the intermediate value theorem, there exists $\sigma \in \left[t-s,t  \right]$ such that 
\begin{equation*}
\ovl{X}(\sigma,0,x,v) \in S(x_0,r_0).
\end{equation*} Moreover, by (\ref{eq:characteristics}) and (\ref{eq:claimend2}), we have
\begin{align*}
|\ovl{V}(\sigma,0,x,v)| & = |\ovl{V}(\sigma,t,(\ovl{X},\ovl{V})(t,0,x,v)| \\
& \geq e^{t-\sigma}|\ovl{V}(t,0,x,v)| - Te^{t-\sigma} \|\ovl{U}_1 \|_{\C^{0,1}_{t,x}} \\
& = e^{t-\sigma}(\alpha - T\|\ovl{U}_1\|_{\C^{0,1}_{t,x}}).
\end{align*} Then, the choice of $\alpha$ in (\ref{eq:choiceofalpha}) yields
\begin{equation*}
|\ovl{V}(\sigma,0,x,v)| \geq \frac{5}{2}.
\end{equation*} Thus, (\ref{eq:claimend}) follows.

\item[Step 2]
Let us denote by $(X^g,V^g)$ the characteristics associated to $-v + U^g$. We have
\begin{equation}
\sup_{(t,x,v) \in Q_T}\left| ((X^g,U^g) - (\ovl{X},\ovl{V}))(t,0,x,v) \right|  \leq  C \|U^g - \ovl{U}\|_{\C^0_{t,x}}.  
\label{eq:approximation}
\end{equation} Observe that, thanks to (\ref{eq:Stokesforg}), (\ref{eq:referenceStokes}), (\ref{eq:referencemoyennenulle}) and (\ref{eq:nulreferenceintergral}), $(U^g - \ovl{U})(t)$ satisfy 
\begin{equation*}
\left\{  \begin{array}{ll}
-\Delta_x (U^g - \ovl{U})(t) + \nabla_x (p^g - \ovl{p})(t) = j_{g-\ovl{f}}(t), & x\in \T^2,\\
\div_x (U^g - \ovl{U})(t) = 0, & x\in \T^2, \\ 
\int_{\T^2} (U^g - \ovl{U})(t) \ud x =0, & t\in[0,T].
\end{array} \right.
\end{equation*} Using the Sobolev embedding theorem and Proposition \ref{proposition:Stokesclassical}, we deduce
\begin{eqnarray}
\|U^g - \ovl{U}\|_{(L^{\infty}_{t,x})^2} & \leq & C\|U^g - \ovl{U}\|_{L^{\infty}_t (H^2_x)^2} \nonumber \\
& \leq & C\| j_{g-\ovl{f}}\|_{L^{\infty}_t L^2(\T^2)^2} \leq  C\epsilon, \nonumber 
\end{eqnarray} using point (b). Hence, choosing $\epsilon_1$ small enough, from (\ref{eq:claimend}) and (\ref{eq:approximation}), the conclusion follows.

\end{description}

\end{proof}


\begin{proof}[Proof of Theorem \ref{thm:controllability}]
By Section 4.2, choosing $\epsilon \leq \min\left\{ \epsilon_0,\epsilon_1 \right\}$, where $\epsilon_0$ satisfies (\ref{eq:choiceofepsilon0}) and $\epsilon_1$ is given by Proposition \ref{proposition:gamma4}, there exists $g\in \S_{\epsilon}$ such that $\V_{\epsilon}[g] = g$ and such that Proposition \ref{proposition:gamma4} applies. \par
The fact that $g$ satisfies system (\ref{eq:VS}) follows from the construction of $\V_{\epsilon}$ and (\ref{eq:absortion}). Since $g\in\C^1([0,T]\times \T^2 \times \R^2)$, thanks to Lemma \ref{lemma:pointc}, (\ref{eq:supportsolutionderefernce}) and the fact that $\Pi$ preserves regularity, we deduce that
\begin{equation*}
\d_t g + v\cdot\nabla_v g - \div_v \left[(U^g - v)g   \right] = 1_{\omega}(x)G,
\end{equation*} for some $G\in \C^0([0,T]\times \T^2 \times \R^2)$. \par
To show (\ref{eq:finalstate}), we observe that from the construction of $\V_{\epsilon}$ and (\ref{eq:referenceinitialandfinal}), we have
\begin{equation}
\V_{\epsilon}[g](T,x,v):= \Pi\left( \tilde{\V}_{\epsilon}[f]_{|([0,T]\times (\T^2 \setminus B(x_0,2r_0)) \times \R^2)\cup([0,\frac{T}{48}] \times \T^2 \times \R^2)} \right)(T,x,v).
\end{equation} In particular, for any $(x,v) \in (\T^2 \setminus \omega) \times \R^2$, it comes from the definition of $\Pi$ that 
\begin{equation*}
\V_{\epsilon}[g](T,x,v) = \tilde{\V}_{\epsilon}[g](T,x,v).
\end{equation*} Moreover, by (\ref{eq:absortion}) and (\ref{eq:explicitsolution}),
\begin{equation*}
\tilde{\V}_{\epsilon}[g](T,x,v) = e^{2T} f_0((X^g,V^g)(0,T,x,v)).
\end{equation*} Hence, the absorption procedure described by (\ref{eq:absortion}) and (\ref{eq:absortionfunction}) and Proposition \ref{proposition:gamma4} allow to conclude that $\tilde{\V}_{\epsilon}[g](T,x,v) =0$ in $(\T^2\setminus \omega) \times \R^2$.

\end{proof}

\section{Conclusion and perspectives}
We have proved in Theorem \ref{thm:controllability} a local controllability result for the Vlasov equation coupled with the stationary Stokes system. Some possible extensions are possible. \par  

We could consider the Vlasov-Stokes system on a bounded domain with boundary, as in \cite{Hamdache}, with Dirichlet boundary conditions for the vector field and specular boundary conditions for the distribution function. In this case, with an internal control, the construction of a reference trajectory given in Section 3 is no longer effective, because of the specular reflection on the boundary of the characteristic flow. In particular, the distinction between good and bad directions should be refined. A geometric control condition could be very useful in this context. The boundary control problem may necessitate a very technical approach.  \par 

Other fluid-kinetic models could possibly be studied with similar techniques, such as the Vlasov-Navier-Stokes system. This is a matter of current work.

\begin{appendix}

\section{Auxiliary results on harmonic approximation}
We gather some results needed for the construction of the reference trajectory in Section 3. \par
As it has been done in by O. Glass in \cite{Glass}, the treatment of large velocities relies on a result on harmonic approximation due to T. Bagby and P. Blanchet (see \cite{BB}).

\begin{proposition}[\cite{BB}]
Let $\mathscr{F}$ be a closed subset of an orientable compact Riemannian manifold $\Omega$, and $\mathscr{U}$ an open subset of $\Omega \setminus \mathscr{F}$. Suppose that $\mathscr{U}$ meets every connected component of $\Omega \setminus \mathscr{F}$. For $f$ harmonic in a neighborhood of $\mathscr{F}$ and $\epsilon >0$, there is a Newtonian function $u$ on $\Omega$, whose poles lie in $\mathscr{U}$, and such that 
\begin{equation}
\sup_\mathscr{F} |u-f| < \epsilon.
\end{equation}
\label{proposition:BB}
\end{proposition}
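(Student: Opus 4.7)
I would prove this by the classical Runge-type duality scheme, combining Hahn--Banach with a pole-pushing / unique continuation argument for the Newtonian potential. By Hahn--Banach applied in the Banach space $\C^0(\mathscr{F})$, it suffices to show that any finite signed Borel measure $\mu$ on $\mathscr{F}$ which annihilates every Newtonian function on $\Omega$ with poles in $\mathscr{U}$ must also annihilate every $f$ harmonic in a neighborhood of $\mathscr{F}$.

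Given such a $\mu$, the first step is to form its Newtonian potential $U_\mu(x) := \int_{\mathscr{F}} G(x,y)\, d\mu(y)$, where $G(x,y)$ is the (symmetric) fundamental solution associated with the Laplace--Beltrami operator on $\Omega$ (or, since constants are harmonic on a compact manifold, of $-\Delta+1$; only the local singular structure will matter). For any fixed $y_0\in\mathscr{U}$ the map $x\mapsto G(x,y_0)$ is by definition a Newtonian function with a single pole at $y_0\in\mathscr{U}$, so the hypothesis on $\mu$ gives $U_\mu(y_0) = \int G(y_0,y)\,d\mu(y) = 0$. Hence $U_\mu\equiv 0$ on $\mathscr{U}$.

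Next, since $\mu$ is supported in $\mathscr{F}$, $U_\mu$ is harmonic (up to a zeroth-order term, if one uses $-\Delta+1$) on $\Omega\setminus\mathscr{F}$. By real-analyticity / unique continuation of solutions of elliptic equations with analytic coefficients, the vanishing of $U_\mu$ on the open set $\mathscr{U}$ propagates: $U_\mu\equiv 0$ on every connected component of $\Omega\setminus\mathscr{F}$ that meets $\mathscr{U}$. The standing hypothesis that $\mathscr{U}$ meets every connected component of $\Omega\setminus\mathscr{F}$ then yields $U_\mu\equiv 0$ on the whole of $\Omega\setminus\mathscr{F}$.

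Finally, to conclude $\int f\,d\mu = 0$ for $f$ harmonic in some open neighborhood $V\supset\mathscr{F}$, pick $\chi\in\Cinfc(V)$ with $\chi\equiv 1$ on a smaller neighborhood $V'$ of $\mathscr{F}$. The distributional identity $\Delta U_\mu = c\,\mu$ (with a normalising constant $c\neq 0$), together with Green's formula, gives
\begin{equation*}
c\int f\,d\mu \;=\; \int_\Omega (f\chi)\,\Delta U_\mu\,dV \;=\; \int_\Omega \Delta(f\chi)\,U_\mu\,dV.
\end{equation*}
The function $\Delta(f\chi)$ vanishes where $\chi\equiv 1$ (because $\Delta f=0$ there) and where $\chi\equiv 0$, so its support lies in the transition region $V\setminus\overline{V'}\subset \Omega\setminus\mathscr{F}$. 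Since $U_\mu\equiv 0$ on that region, the right-hand side vanishes, whence $\int f\,d\mu=0$. The \emph{main obstacle} is making this argument rigorous in the manifold setting: one has to fix a precise notion of ``Newtonian function'' with poles in a prescribed set on a compact Riemannian manifold, construct a suitable Green's kernel (using $-\Delta+1$ to avoid the kernel of $\Delta$), and check that the distributional identity $\Delta U_\mu = c\mu$ and the unique continuation step both go through in this generality; all of this is standard elliptic theory but must be assembled carefully, which is exactly the technical work carried out in~\cite{BB}.
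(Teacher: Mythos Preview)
The paper does not supply its own proof of this proposition: it is stated in Appendix~A as a quotation from Bagby--Blanchet~\cite{BB} and used as a black box in the construction of the reference trajectory. So there is no ``paper's proof'' to compare against.

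That said, your proposal is the correct outline, and it is essentially the strategy of the original source. The Hahn--Banach reduction to annihilating measures, forming the potential $U_\mu$, killing it on $\mathscr{U}$ by the hypothesis, propagating the vanishing to all of $\Omega\setminus\mathscr{F}$ by unique continuation (this is exactly where the connectedness hypothesis enters), and then the cutoff/Green's formula computation to deduce $\int f\,d\mu=0$ --- this is the classical Runge scheme, and it is what Bagby--Blanchet carry out. Your identification of the technical issues (replacing $-\Delta$ by $-\Delta+1$ on a compact manifold to get an honest fundamental solution, and making ``Newtonian function with poles in $\mathscr{U}$'' precise in this setting) is also on point; those are indeed the places where the manifold version requires care beyond the Euclidean case. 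One small refinement: to get the full approximation one typically needs not just the single-pole kernels $G(\cdot,y_0)$ but also their derivatives in $y_0$ (to generate enough test functions), which amounts to a pole-pushing argument; your sketch implicitly uses this when invoking ``Newtonian functions with poles in $\mathscr{U}$'' as a class, but it is worth making explicit.
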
  

This result allows to show the following, which is a minor variation of \cite[Lemma A.1, p. 374]{Glass}. Let $\left\{e_1, \dots, e_N  \right\}$ be given by Definition \ref{definition:baddirections}.

\begin{proposition}
For any $i\in \left\{1,\dots,N \right\}$ and any $\epsilon >0$, there exists $\theta^i \in \C^{\infty}(\T^2;\R)$ such that 
\begin{eqnarray}
&& \Delta \theta^i(x) = 0, \quad x \in \T^2 \setminus B\left( x_0, \frac{r_0}{10} \right),\label{eq:largephiharmonic}  \\
&& \| \nabla \theta^i - e_i \|_{\C^1(\T^2 \setminus [B(x_0, r_0/10) + \R e_i])} \leq \epsilon. \label{eq:largephiapproximation}
\end{eqnarray}
\label{proposition:highvelocitiesapproximation}
\end{proposition}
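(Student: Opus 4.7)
The plan is to invoke Proposition~\ref{proposition:BB} (Bagby--Blanchet) with the ansatz $f(x) := e_i \cdot x$, which, being linear on $\R^2$, is everywhere harmonic and satisfies $\nabla f \equiv e_i$. Since $e_i$ is a bad direction, the Remark after Definition~\ref{definition:baddirections} gives that $e_i$ is rational; consequently the tube $\mathcal{T} := B(x_0, r_0/10) + \R e_i$ is a tubular neighborhood of a simple closed geodesic on $\T^2$, and $\T^2 \setminus \mathcal{T}$ is an open annulus. The function $f$ is only well-defined locally as a function on $\T^2$, but it lifts to a single-valued harmonic function on any simply connected open subset.

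First, I would select a compact, simply connected set $\mathscr{F} \subset \T^2 \setminus B(x_0, r_0/10)$ that fills as much of $\T^2 \setminus \mathcal{T}$ as possible (the annulus cut along a thin transverse arc), so that $f$ admits a single-valued harmonic lift on a neighborhood of $\mathscr{F}$. I would then pick an open set $\mathscr{U} \subset \T^2 \setminus \mathscr{F}$ centered on $B(x_0, r_0/10)$ and meeting every connected component of $\T^2 \setminus \mathscr{F}$, and apply Proposition~\ref{proposition:BB} at tolerance $\delta > 0$ to obtain a Newtonian function $u$ on $\T^2$, with poles in $\mathscr{U}$, satisfying $\sup_{\mathscr{F}} |u - f| < \delta$. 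Modifying $u$ smoothly inside $B(x_0, r_0/10)$ to remove its poles (leaving $u$ unchanged on $\T^2 \setminus B(x_0, r_0/10)$, where it is already real-analytic) produces $\theta^i \in \C^\infty(\T^2; \R)$ that is harmonic on $\T^2 \setminus B(x_0, r_0/10)$, giving~(\ref{eq:largephiharmonic}).

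For~(\ref{eq:largephiapproximation}), since $\theta^i - f$ is harmonic on a uniform interior neighborhood of the bulk of $\mathscr{F}$ and controlled in sup norm by $\delta$, standard interior Cauchy estimates for harmonic functions upgrade this to $\|\nabla(\theta^i - f)\|_{\C^1} \leq C \delta$ on that interior; together with $\nabla f = e_i$ and the choice $\delta := \epsilon/C$, this yields the desired $\C^1$ bound on $\T^2 \setminus \mathcal{T}$. The main obstacle is topological: because $\T^2 \setminus \mathcal{T}$ is an annulus, the ansatz $f(x) = e_i \cdot x$ has nontrivial monodromy around the noncontractible loop and cannot be realized single-valued on the full complement of the tube. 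The delicate point is therefore the choice of $\mathscr{F}$ and its complement: the transverse slit used to render $\mathscr{F}$ simply connected must be absorbed into $\mathscr{U}$ (and placed so it abuts $B(x_0, r_0/10)$), so that the approximation carried out via Bagby--Blanchet extends uniformly across the slit to the full set $\T^2 \setminus \mathcal{T}$, matching the construction in \cite[Lemma~A.1]{Glass}.
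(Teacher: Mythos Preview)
The paper does not give a self-contained proof of this proposition; it simply records the statement as ``a minor variation of \cite[Lemma~A.1, p.~374]{Glass}'' and supplies no argument. Your outline---apply Proposition~\ref{proposition:BB} to the linear ansatz $f(x)=e_i\cdot x$, smooth the resulting poles inside $B(x_0,r_0/10)$, and upgrade the $\C^0$ bound to $\C^1$ via interior estimates for harmonic functions---is exactly the template one expects from Glass's proof.

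There is, however, a genuine gap at the step you yourself flag as ``the main obstacle,'' and your proposed resolution cannot work. The obstruction is global, not a matter of placing $\mathscr F$ and $\mathscr U$ cleverly. The complement $\T^2\setminus[B(x_0,r_0/10)+\R e_i]$ is an annulus whose core loop $\gamma$ lies in the homology class of the $e_i$-geodesic, of length $L=\sqrt{p^2+q^2}>0$. For any $\theta\in\C^\infty(\T^2;\R)$ one has $\oint_\gamma \nabla\theta\cdot\ud\ell=0$ by single-valuedness, while $\|\nabla\theta-e_i\|_{\C^0}\le\epsilon$ on this annulus would force the same integral into $[L(1-\epsilon),L(1+\epsilon)]$. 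Hence \eqref{eq:largephiapproximation} is incompatible with $\theta^i\in\C^\infty(\T^2;\R)$ for small $\epsilon$, and no placement of a transverse slit saves the Bagby--Blanchet construction: on the two sides of such a slit the ansatz $f=e_i\cdot x$ differs by $L$, and a continuous approximant cannot be $\delta$-close to both values. By contrast, the ansatz $f(x)=e_i^\perp\cdot x$ has \emph{zero} monodromy around $\gamma$ and is already single-valued on a neighbourhood of the annulus, so the Bagby--Blanchet argument runs without any slit through the complement of the tube; this is the version relevant in \cite{Glass}, where the force acting on particles is $\nabla\theta^i$ and the target direction is $e_i^\perp$. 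The replacement of $e_i^\perp$ by $e_i$ in the present statement (made because the Stokes velocity here is $\nabla^\perp\theta^i$) is therefore not innocuous, and you should not expect the argument with $f=e_i\cdot x$ to close.
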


For the treatment of low velocities, we need the following result, proved by O. Glass in \cite[Lemma 3, p. 356]{Glass}.

\begin{proposition}[O. Glass \cite{Glass}]
For any nonempty open set $\O \subset \T^2$, there exists $\theta \in \C^{\infty}(\T^2;\R)$ such that 
\begin{eqnarray}
\Delta \theta(x) = 0, && \forall x\in \T^2\setminus \O, \label{eq:thetaharmonic} \\
|\nabla \theta (x) | >0 && \forall x \in \T^2 \setminus \O. \label{eq:thetapositivemodulus}
\end{eqnarray}
\label{proposition:auxiliarylowvelocities}
\end{proposition}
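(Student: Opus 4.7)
I apply the Bagby--Blanchet harmonic approximation result (Proposition~A.1) with $\Omega = \T^2$, $\mathscr{F} := \T^2 \setminus \O$ and $\mathscr{U} := \O$. The hypotheses are immediate: $\mathscr{U}$ is open, nonempty, and trivially meets every connected component of $\Omega \setminus \mathscr{F} = \O$. The proof breaks into three steps.

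\textbf{Step 1 (reference function).} First I construct a $\C^\infty$ function $f$ defined and harmonic in an open neighborhood $V$ of $\mathscr{F}$ in $\T^2$, with $|\nabla f| \ge c > 0$ on $V$. A natural candidate is a dipole built from the zero-mean Green function $G(\cdot, y_0)$ of $-\Delta$ on $\T^2$, i.e.\ $f := \xi \cdot \nabla_y G(\cdot, y_0)$ for some $y_0 \in \O$ and $\xi \in \R^2 \setminus \{0\}$. This function is harmonic on $\T^2 \setminus \{y_0\}$, and near $y_0$ behaves like the Euclidean dipole $(\xi\cdot x)/(2\pi|x|^2)$, whose gradient is nonzero away from the pole. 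I show that by choosing $(y_0,\xi)$ suitably in $\O$ (and, if necessary, superposing several such dipoles centered at various points of $\O$, keeping total charge zero so the resulting combination defines a well-posed periodic harmonic function off the poles), the finite critical set of $f$ can be arranged to lie in $\O$. Then $V$ is a small open neighborhood of $\mathscr{F}$ disjoint from both $\{y_0\}$ and the critical set of $f$.

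\textbf{Step 2 (Bagby--Blanchet approximation).} Fix a compact set $\mathscr{F}'$ with $\mathscr{F} \subset \mathrm{int}(\mathscr{F}') \subset \mathscr{F}' \subset V$, and set $\mathscr{U}' := \O \setminus \mathscr{F}'$; if the thickening is small enough, $\mathscr{U}'$ still meets every connected component of $\T^2 \setminus \mathscr{F}'$. Applying Proposition~A.1 to $(\mathscr{F}', \mathscr{U}', f)$ yields, for each $\epsilon > 0$, a Newtonian function $u_\epsilon$ on $\T^2$ whose poles lie in $\mathscr{U}' \subset \O$ and such that
\begin{equation*}
\sup_{\mathscr{F}'} |u_\epsilon - f| < \epsilon .
\end{equation*}
In particular $u_\epsilon$ is smooth and harmonic on an open neighborhood of $\mathscr{F}$.

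\textbf{Step 3 (gradient control and smoothing).} On the interior of $\mathscr{F}'$ the difference $u_\epsilon - f$ is harmonic, and $\mathscr{F}$ lies at positive distance $d$ from $\partial\mathscr{F}'$. The standard interior Cauchy-type estimate for harmonic functions gives $\sup_{\mathscr{F}} |\nabla(u_\epsilon - f)| \le (C/d)\,\epsilon$. Choosing $\epsilon$ so that $C\epsilon/d < c/2$ yields $|\nabla u_\epsilon| \ge c/2 > 0$ on $\mathscr{F}$, while $u_\epsilon$ is automatically harmonic on the open set $\T^2 \setminus \O \supset \mathscr{F}$. It remains to remove the finitely many poles of $u_\epsilon$, all of which lie in $\mathscr{U}' \subset \O$: multiply by a cutoff supported in $\O$ and away from $\mathscr{F}$ and replace $u_\epsilon$ near each pole by an arbitrary smooth extension. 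This yields a $\theta \in \C^\infty(\T^2;\R)$ that coincides with $u_\epsilon$ on a neighborhood of $\mathscr{F}$, and hence satisfies (A.5) and (A.6).

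\textbf{Main obstacle.} The technical heart of the argument is Step~1: producing a reference function $f$, harmonic near $\mathscr{F}$, whose gradient does not vanish on $\mathscr{F}$. Because $\T^2$ is compact, any nonconstant harmonic function with isolated singularities must carry some critical points, and the topology of $\T^2$ constrains their location. Arranging the critical points of the dipole (or of a multi-dipole combination) to lie entirely inside $\O$ is the delicate point, requiring a local analysis near the poles together with a continuity/transversality argument for the dependence of the critical set on the parameters $(y_0,\xi)$ and on the additional Green's-function sources in $\O$. Once this is achieved, Steps~2 and~3 are essentially routine.
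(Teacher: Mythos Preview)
The paper does not prove this proposition; it is quoted from Glass \cite[Lemma~3, p.~356]{Glass} without reproducing the argument, so there is no in-paper proof to compare against. I can only assess your proposal on its own terms.

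Your scheme has a structural redundancy that exposes a genuine gap. In Step~1 you take $f$ to be a dipole (or a finite superposition of dipoles) with all poles in $\O$, and you require its critical set to lie in $\O$. But such an $f$ is already smooth and harmonic on all of $\T^2$ minus its poles, with $|\nabla f|>0$ on $\T^2\setminus\O$; cutting $f$ off in small balls around the finitely many poles (all contained in $\O$) then produces the desired $\theta$ immediately. Steps~2 and~3 are therefore superfluous for this choice of reference function: Bagby--Blanchet only gives $\C^0$ approximation of a function you already assume has nonvanishing gradient on $\mathscr F$, and your interior gradient estimate in Step~3 merely propagates that same hypothesis. If Step~1 holds as stated you are done without them, and if it fails they cannot rescue the argument.

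Hence the whole content of the proposition collapses onto your Step~1, which you do not prove. You correctly flag it as the ``main obstacle'', but the sketch offered (``a continuity/transversality argument for the dependence of the critical set on the parameters'') is not an argument. A single dipole will in general \emph{not} suffice: by translation invariance of $\T^2$, the critical points of $\xi\cdot\nabla_y G(\cdot,y_0)$ sit at positions relative to $y_0$ that depend only on $\xi$ and the lattice; a Poincar\'e--Hopf count shows they carry total index $-2$ and hence cannot be empty, and if $\O$ is a ball of radius smaller than their distance to $y_0$, no choice of $(y_0,\xi)$ places them inside $\O$. Multi-dipole superpositions give more freedom, but showing that one can force \emph{all} critical points into an arbitrary nonempty open $\O$ is precisely the nontrivial statement to be established. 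Glass's original proof addresses this point directly; your proposal, as written, assumes it.
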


\section{The Stokes system}
The well-posedness theory for the Stokes system is classical, especially in the case of $L^2$ with possibly Dirichlet boundary conditions. However, we shall need to precise an energy estimate used in Section 4 and and a regularity result in $L^p$ spaces. \par
Following \cite[Ch.2]{Temam}, we set the appropriate functional setting. We shall work with the usual Sobolev spaces  $W^{m,p}(\T^2)$, with $m\in \N$ and $1\leq p \leq \infty$. When $p=2$, we can write, thanks to the Fourier series,
\begin{align*}
&H^m(\T^2) = \left\{ f\in L^2; \, f=\sum_{n\in\Z^2} c_n e^{in\cdot x}, \, \ovl{c_n} = c_{-n}, \, \sum_{n\in\Z^2} \left( 1 + |n|\right)^{2m}|c_n|^2 < \infty \right\}, \\
& H^m_0(\T^2)  = \left\{ f\in H^m(\T^2); \, \int_{\T^2} f(x)\ud x =0 \right\},
\end{align*} which allows to equip these spaces, respectively, with the norms
\begin{equation*}
\|f \|_{H^m} := \left( \sum_{n\in\Z^2}(1 + |n|)^{2m}|c_n|^2  \right)^{\frac{1}{2}}, \quad \|f \|_{H_0^m} := \left( \sum_{n\in\Z^2}|n|^{2m}|c_n|^2  \right)^{\frac{1}{2}},
\end{equation*} with equivalence of norms in the case of $H^m_0$ as a subspace of $H^m$.\par 
In the case of vector fields, we shall use $(W^{m,p}(\T^2))^2$, with the product norm. Let us introduce, as usual,
\begin{equation*}
\mathbb{V}:= \left\{ F \in H^1(\T^2)^2; \, \div_x F = 0 \textrm{ in } \R^2 \right\},
\end{equation*} where the operator $\div_x$ is taken in the distributional sense. This setting allows to treat the system (see \cite[Section 2.2]{Temam}) 
\begin{equation}
\left\{ \begin{array}{ll}
-\Delta_x U + \nabla_x p = f, & x \in \T^2, \\
\div_x U = 0, & x\in \T^2, \\
\int_{\T^2} U \ud x =0.
\end{array} \right.
\label{eq:homogeneousStokes}
\end{equation}

\begin{proposition}[\cite{Temam}]
Let $f\in L^2(\T^2)^2$ such that $\int_{\T^2} f(x)\ud x =0$. Then, the Stokes system (\ref{eq:homogeneousStokes}) has a unique solution $(U,p) \in \left( H^2_0(\T^2)^2 \cap \mathbb{V} \right)\times H^1(\T^2)$. Moreover, there exists a constant $C_0>0$ such that 
\begin{equation}
\|U\|_{H^2} + \|p\|_{H^1} \leq C_0 \|f\|_{L^2}.
\label{eq:regularityStokes}
\end{equation}
\label{proposition:Stokesclassical}
\end{proposition}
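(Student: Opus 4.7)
I would prove Proposition \ref{proposition:Stokesclassical} by Fourier series on the torus, which diagonalises the Stokes operator and makes both existence/uniqueness and the regularity estimate essentially algebraic.

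Write $U(x)=\sum_{n\in\Z^2} \hat U_n e^{in\cdot x}$, $p(x)=\sum_{n\in\Z^2} \hat p_n e^{in\cdot x}$ and $f(x)=\sum_{n\in\Z^2}\hat f_n e^{in\cdot x}$, with $\hat U_n\in\comp^2$, $\hat p_n\in\comp$, $\hat f_n\in\comp^2$, and the reality conditions $\overline{\hat U_n}=\hat U_{-n}$ etc. The zero-average conditions read $\hat U_0=0$ and $\hat f_0=0$ (the latter is our hypothesis), while the $n=0$ pressure mode $\hat p_0$ is free and can be set to $0$ to normalise $p$. For each $n\neq 0$, the system becomes the linear problem
\begin{equation*}
|n|^2 \hat U_n + i n\, \hat p_n = \hat f_n, \qquad n\cdot \hat U_n = 0.
\end{equation*}
Taking the inner product of the first equation with $n$ and using the second yields $\hat p_n = -i\,(n\cdot \hat f_n)/|n|^2$, and then $\hat U_n = |n|^{-2}\bigl(\hat f_n - n\,(n\cdot \hat f_n)/|n|^2\bigr)$, i.e.\@ $\hat U_n$ is the Leray projection of $\hat f_n$ onto $n^\perp$ divided by $|n|^2$. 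This gives existence and uniqueness of $(U,p)$ mode by mode.

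Since the bracket in the formula for $\hat U_n$ is an orthogonal projection, $|\hat U_n|\le |\hat f_n|/|n|^2$ and $|\hat p_n|\le |\hat f_n|/|n|$ for every $n\ne 0$. Using the norm characterisation
\begin{equation*}
\|U\|_{H^2_0}^2=\sum_{n\ne 0}|n|^4|\hat U_n|^2,\qquad \|p\|_{H^1_0}^2=\sum_{n\ne 0}|n|^2|\hat p_n|^2,\qquad \|f\|_{L^2}^2=\sum_{n\ne 0}|\hat f_n|^2,
\end{equation*}
I immediately obtain $\|U\|_{H^2_0}^2+\|p\|_{H^1_0}^2\le 2\|f\|_{L^2}^2$. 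Together with the equivalence of $\|\cdot\|_{H^m_0}$ and $\|\cdot\|_{H^m}$ on the zero-mean subspace (recalled just before the statement) this yields (\ref{eq:regularityStokes}) with an explicit $C_0$.

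Finally, existence is obtained by checking that the Fourier series so defined actually converge in $H^2$ and $H^1$ respectively: this is automatic from the bounds above, since $f\in L^2$ ensures $\sum |\hat f_n|^2<\infty$. The regularity $U\in\mathbb{V}$ follows from $n\cdot \hat U_n=0$ for every $n$, which means $\div_x U=0$ in the distributional sense. There is no real obstacle here; the only mild point to be careful about is that the normalisation of the pressure is needed to make it unique (adding a constant does not change the equation), and the zero-mean condition on $U$ is needed for the same reason at $n=0$. Everything else is bookkeeping on Fourier coefficients.
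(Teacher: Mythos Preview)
Your proof is correct. The paper itself does not give a proof of this proposition: it is simply stated in the appendix with a citation to Temam's book \cite{Temam}, so there is nothing to compare against on the paper's side. Your Fourier-series argument is in fact the natural one in the periodic setting (and is precisely why the paper introduces the spaces $H^m(\T^2)$ and $H^m_0(\T^2)$ via Fourier series immediately before the statement); it delivers existence, uniqueness and the estimate in one stroke, with an explicit constant. The only cosmetic remark is that the paper states $p\in H^1(\T^2)$ rather than $H^1_0(\T^2)$, so uniqueness of $p$ is only up to an additive constant unless one imposes the normalisation $\hat p_0=0$ as you do.
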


The following regularity result in $L^p$ spaces is an adaptation of \cite[Theorem 3, p. 172]{Amrouche} to the case of the flat torus $\T^2$.
\begin{proposition}[\cite{Amrouche}]
Let $r\in (1,\infty)$ and $m\in \N$. For each $f \in W^{m,r}(\T^2)$ satisfying $\int_{\T^2}f(x)\ud x = 0$, the Stokes system (\ref{eq:homogeneousStokes}) has a unique solution $U \in W^{m+2,r}(\T^2)$, $p \in W^{m+1,r}(\T^2)$. Moreover, 
\begin{equation}
\| U \|_{W^{m+2,p}(\T^2)^2} + \|p\|_{W^{m+1,p}(\T^2)} \leq C_1 \| f\|_{W^{m,p}(\T^2)^2},
\label{eq:energyestimateLp}
\end{equation} for a constant $C_1>0$.
\label{proposition:StokesLp}
\end{proposition}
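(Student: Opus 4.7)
The plan is to solve the system explicitly by Fourier series on $\T^2$, reducing the required estimate to $L^r$-boundedness of a finite family of zero-order Fourier multipliers, to which one then applies the Mikhlin (or Marcinkiewicz) multiplier theorem on $\Z^2$. Since $\int_{\T^2} f \ud x = 0$, we may write $f = \sum_{n \in \Z^2 \setminus\{0\}} \hat{f}(n) e^{in\cdot x}$, and we seek $U = \sum \hat{U}(n) e^{in\cdot x}$, $p = \sum \hat{p}(n) e^{in\cdot x}$, both with zero mean. The Stokes system then becomes, mode by mode,
\begin{equation*}
|n|^2 \hat{U}(n) + in\, \hat{p}(n) = \hat{f}(n), \qquad n \cdot \hat{U}(n) = 0, \qquad n \neq 0.
\end{equation*}
Taking the inner product of the first equation with $n$ and using incompressibility yields $\hat{p}(n) = -i \, n\cdot \hat{f}(n)/|n|^2$, and then
\begin{equation*}
\hat{U}(n) = \frac{1}{|n|^2}\Bigl( \hat{f}(n) - \frac{n\,(n\cdot \hat{f}(n))}{|n|^2}\Bigr), \qquad n \neq 0,
\end{equation*}
which determines $(U,p)$ uniquely in the zero-mean class and gives existence as soon as the resulting series converge in the right spaces.

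The next step is to differentiate. For a multi-index $\alpha$ with $|\alpha| = m+2$ and components $U_k$, one has $\widehat{\partial^\alpha U_k}(n) = \sum_{j} M^{\alpha}_{kj}(n)\, \widehat{\partial^{\alpha'} f_j}(n)$ after extracting any sub-multi-index $\alpha'$ of length $m$ from $\alpha$, where each remaining symbol has the form
\begin{equation*}
M^{\alpha}_{kj}(n) = \frac{P(n)}{|n|^{2s}}\Bigl(\delta_{kj} - \frac{n_k n_j}{|n|^2}\Bigr)
\end{equation*}
with $P$ a homogeneous polynomial of degree $2s$. Analogous expressions hold for $\partial^\beta p$ with $|\beta| = m+1$. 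In every case one obtains a finite collection of symbols that are $C^\infty$ on $\R^2\setminus\{0\}$, bounded, and homogeneous of degree zero.

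It therefore suffices to prove that any such zero-homogeneous smooth symbol defines a bounded operator on $L^r(\T^2)$ for $1<r<\infty$. This is the content of Mikhlin's multiplier theorem on the torus: the restriction to $\Z^2$ of a $C^\infty$, zero-homogeneous function on $\R^2\setminus\{0\}$ gives a bounded Fourier multiplier on $L^r(\T^2)$ for $r\in(1,\infty)$, with operator norm controlled by finitely many sup-norms of derivatives of the symbol on the unit circle. One obtains this either by transference from $\R^2$ via de Leeuw's theorem (applied to a smooth extension of the sequence to $\R^2$) or by checking Marcinkiewicz's conditions directly on $\Z^2$. Applied to each multiplier $M^\alpha_{kj}$ and its analogue for the pressure, and summed over the finitely many multi-indices $\alpha$ with $|\alpha|=m+2$ (resp.\ $|\beta|=m+1$), this yields (\ref{eq:energyestimateLp}).

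The main obstacle is essentially bibliographic: stating and invoking the Mikhlin/Marcinkiewicz multiplier theorem in its periodic form. The compatibility condition $\int_{\T^2} f \ud x = 0$ is what allows us to discard the forbidden mode $n=0$ where $|n|^{-2}$ is singular; uniqueness in the zero-mean class is then automatic from the explicit Fourier formulas above, and existence follows once the multiplier estimate has been established, since the formulas define $U \in W^{m+2,r}(\T^2)^2$ and $p\in W^{m+1,r}(\T^2)$ with the announced bound.
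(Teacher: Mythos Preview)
The paper does not prove this proposition at all: it is simply stated in the appendix as an adaptation of \cite[Theorem 3, p.~172]{Amrouche} to the flat torus, with no argument given. So there is no ``paper's own proof'' to compare against beyond the citation.

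Your Fourier-series argument is correct and is in fact the natural route on $\T^2$. Writing $U$ and $p$ mode by mode via the Leray projector and then recognising the derivatives of $U$ and $p$ as compositions of $\partial^{\alpha'} f$ with zero-order homogeneous multipliers is exactly right; the Mikhlin--Marcinkiewicz theorem (or de Leeuw transference) then delivers the $W^{m,r}\to W^{m+2,r}$ bound. The Amrouche--Girault paper you would be adapting works in domains with boundary and relies on potential-theoretic and duality arguments rather than explicit Fourier multipliers; on the torus your approach is both simpler and more transparent, and it makes the role of the zero-mean hypothesis (killing the $n=0$ mode) completely explicit. One small point worth tightening if you write this out in full: be careful to fix the pressure by imposing $\int_{\T^2} p\,\ud x = 0$ (as you implicitly do by taking $\hat p(0)=0$), since otherwise $p$ is only determined up to a constant and the $W^{m+1,r}$ norm on the left of the estimate would not be well-defined.
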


\end{appendix}

\addcontentsline{toc}{section}{Bibliography}       
\bibliographystyle{plain}                            

\end{document}